\let\cal\mathcal
\newtheorem{theorem}{Theorem}
\newtheorem{lemma}[theorem]{Lemma}
\newtheorem{corollary}[theorem]{Corollary}
\newtheorem{proposition}[theorem]{Proposition}
\theoremstyle{remark}
\newtheorem{remark}[theorem]{Remark}
\theoremstyle{definition}
\newtheorem{definition}[theorem]{Definition}
\theoremstyle{remark}
\newtheorem{example}[theorem]{Example}
\numberwithin{equation}{section}
\numberwithin{theorem}{section}
\def\M{\cal{M}}
\def\H{\cal{H}}
\def\ch{\raise 0.5ex \hbox{$\chi$}}
\def\T{\tau}
\def\E{\cal{E}}
\let\phi\varphi
\let\epsilon\varepsilon
\def\Re{\operatorname{Re}}
\def\log{\operatorname{log}}
\renewcommand{\a}{\alpha}
\renewcommand{\b}{\beta}
\newcommand{\N}{\cal{N}}
\newcommand{\h}{\mathsf{h}}
\newcommand{\tr}{\mbox{\rm tr}}
\begin{document}

\title[Davis decomposition]{Noncommutative Davis type decompositions and applications}
\author[ Randrianantoanina]{Narcisse Randrianantoanina}
\address{Department of Mathematics, Miami University, Oxford,
Ohio 45056, USA}
 \email{randrin@miamioh.edu}
 
 \author[Wu]{ Lian Wu}
\address{School of Mathematics  and Statistics, Central South University, Changsha 410075, China}
\email{wulian@cnu.edu.cn}

\author[Xu]{Quanhua Xu}
\address {Institute for Advanced Study in Mathematics, Harbin Institute of Technology,  Harbin 150001, China; and Laboratoire de Math{\'e}matiques, Universit{\'e} de Bourgogne Franche-Comt{\'e}, 25030 Besan\c{c}on Cedex, France; and Institut Universitaire de France}
\email{qxu@univ-fcomte.fr}
 
 \date{\today}

\subjclass[2010]{Primary: 46L53, 60G42.  Secondary: 46L52, 60G50}
\keywords{Noncommutative martingales, Davis decomposition,   symmetric spaces, moment inequalities}


\begin{abstract} We prove the noncommutative  Davis decomposition for the column Hardy space  $\H_p^c$ for all $0<p\leq 1$. A new feature of our Davis decomposition is a simultaneous control of $\H_1^c$ and $\H_q^c$ norms for any  noncommutative martingale in $\H_1^c \cap \H_q^c$ when $q\geq 2$. As applications, we   show that  the Burkholder/Rosenthal  inequality holds for  bounded martingales in a noncommutative symmetric space  associated with a function space $E$ that  is either an interpolation 
  of  the couple $(L_p, L_2)$  for some $1<p<2$  or  is an interpolation of the couple $(L_2, L_q)$  for some $2<q<\infty$.  
  We  also obtain the corresponding $\Phi$-moment Burkholder/Rosenthal   inequality for Orlicz functions  that are either $p$-convex and $2$-concave for some $1<p<2$ or  are $2$-convex and $q$-concave for some $2<q<\infty$.
\end{abstract}


\maketitle




\section{Introduction}


This paper follows the current line of investigation on noncommutative martingale inequalities. Many classical results have been generalized  to the noncommutative setting. One of them, directly relevant to the subject of the present paper is the so called Davis decomposition (\cite{Da2}).  The original Davis  decomposition is fundamental in classical martingale theory and has been generalized to various contexts. For instance, the vector-valued case is nowadays well-known in the literature, a version of the  Davis decomposition  for a special class of martingales called Hardy martingales was studied recently in  \cite{Muller2}.

Recall that  for the noncommutative setting,  the Davis decomposition for the noncommutative  martingale Hardy spaces $\H_1$  was obtained in \cite{Perrin} using duality arguments.  A constructive approach appeared in \cite{Junge-Perrin}  for the  space $\H_p$ for  $1\leq p<2$. The noncommutative Davis decomposition  has proven to  be a powerful tool in noncommutative martingale inequalities; for instance, it plays a prominent role in establishing various forms of Doob maximal inequalities in \cite{Hong-Junge-Parcet} as well as in  the study of continuous time noncommutative martingale inequalities in \cite{Junge-Perrin}.

It is our intention in this paper to investigate the case $0<p\leq 1$. We provide a Davis type decomposition for certain class of  sequences in the column-$L_p$-spaces. This can be roughly described as splitting any adapted sequence in the column-$L_p$-space into a diagonal part and an adapted sequence that belongs to the corresponding conditioned column-$L_p$-space.
Even for the commutative case, our result for $0<p<1$  do not seem to be available in the literature.  An important  new feature of our Davis decomposition is that when applied to martingales, it gives a simultaneous control of the column Hardy spaces $\H_1^c$ and $\H_q^c$ norms when $q\geq 2$. More precisely, for any given $q\geq 2$,  any  martingale $x$ in  the intersection of Hardy spaces $\H_1^c\cap \H_q^c$ can be written as a sum of two martingales $y$ and $z$ such that  $\big\| y\big\|_{\h_p^d} + \big\|z \big\|_{\h_p^c} \leq C\big\|x\big\|_{\H_p^c}$ for all $1 \leq p\leq q$ where $\h_p^d$ and $\h_p^c$ denote the diagonal Hardy space and the column conditioned Hardy space respectively. Decompositions with such simultaneous control of norms are very useful for the study of noncommutative martingales in continuous time (see \cite{Junge-Perrin}). They are also very essential in the study of martingale  Hardy spaces associated to noncommutative symmetric spaces (\cite{RW, RW2}). Indeed, our primary motivation comes from the latter. The simultaneous nature  of our Davis decomposition allows us to extend it through the use of interpolation to martingales in  certain  noncommutative symmetric spaces satisfying some natural conditions. This in turn provides    a general framework to systematically transfer  results involving square functions which are generally referred  to as 
Burkholder-Gundy inequality to  combinations of conditioned square functions and diagonal parts known as Burkholder/Rosenthal  inequality.  

Recall the noncommutative Burkholder/Rosenthal inequalities from  \cite{JX}. It asserted that if  $2\leq p <\infty$ and $x=(x_n)_{n\geq 1}$ is a noncommutative martingale that is $L_p$-bounded  then
\begin{equation}\label{nc1}
\big\|x\big\|_p \simeq_p \max\Big\{ \big\|s_c(x)\big\|_p,  \big\|s_r(x)\big\|_p, \big( \sum_{n\geq 1} \big\|dx_n\big\|_p^p \big)^{1/p}\Big\},
\end{equation}
where $s_c(x)$ and $s_r(x)$ denote the column and row versions  of  conditioned square functions which we refer to the next section for formal definitions. The corresponding inequalities for  the range $1<p<2$   dual  to \eqref{nc1} reads as follows: if $x=(x_n)_{n\geq 1}$ is a noncommutative martingale in $L_2(\M)$ then 
\begin{equation}\label{nc2}
\big\|x \big\|_p \simeq_p \inf\Big\{ \big\|s_c(y)\big\|_p +  \big\|s_r(z)\big\|_p + \big( \sum_{n\geq 1} \big\|dw_n\big\|_p^p \big)^{1/p}\Big\},
\end{equation}
where  the infimum is taken over all $x=y+z +w$  with $y$, $z$, and $w$ martingales. The differences between the two cases $1<p<2$ and $2\leq p<\infty$ are now well-understood in the field. The natural next step is to classify  noncommutative symmetric spaces for which  either \eqref{nc1} or \eqref{nc2} remains valid. Naturally, interpolation plays a significant role in this line of research. It was established in \cite{Dirksen2} that if a  function space $E$ is an interpolation space of  the couple $(L_p,L_q)$ for $2<p<q<\infty$, then  
\begin{equation}\label{nc3}
\big\|x\big\|_{E(\M)}\simeq_E \max\Big\{ \big\|s_c(x)\big\|_{E(\M)},  \big\|s_r(x)\big\|_{E(\M)},  \big\| (dx_n)_{n\geq 1}\big\|_{E(\M \overline{\otimes} \ell_\infty)}\Big\}.
\end{equation}
On the other hand, the dual result was proved in \cite{RW} which states that if $E$
is a symmetric space  that is an interpolation space of  the couple $(L_p,L_q)$ for $1<p<q<2$ then 
\begin{equation}\label{nc4}
\big\|x \big\|_{E(\M)} \simeq_E \inf\Big\{ \big\|s_c(y)\big\|_{E(\M)} +  \big\|s_r(z)\big\|_{E(\M)} +   \big\| (dw_n)_{n\geq 1}\big\|_{E(\M \overline{\otimes} \ell_\infty)} \Big\},
\end{equation}
where as in \eqref{nc2}, the  infimum is taken over all  decompositions $x=y+z +w$  with $y$, $z$, and $w$ martingales in $E(\M,\T)$.
The situation at the endpoints were left open in  \cite{RW}. We solve this problem positively. More precisely, we obtain that \eqref{nc3} and \eqref{nc4} remain valid for  $E$ being an interpolation of the couple $(L_2, L_q)$ for $2<q<\infty$, respectively, $(L_p,L_2)$  for $1<p<2$. As noted earlier, our new Davis decomposition provides the decisive ingredient in our argument. 

In the last part of the paper, we consider  the noncommutative Burkholder/Rosental  inequalities using moments associated with Orlicz spaces. These moments are generally referred   to in the literature  as $\Phi$-moment inequalities. For the classical setting, this topic goes back to \cite{Bu1, Burkholder-Davis-Gundy}.  For noncommutative martingales, this line of research was initiated by Bekjan and Chen in \cite{Bekjan-Chen} where they provided   several $\Phi$-moment  inequalities such as $\Phi$-moment versions of the noncommutative Khintchine inequalities and noncommutative Burkholder-Gundy inequality among other closely related results. Subsequently, $\Phi$-moment analogues of other inequalities were also 
considered (see for instance, \cite{Bekjan-Chen-Ose, Dirksen, Dirksen-Ricard}). 
Recently, the  sharpest result for the $\Phi$-moment analogue of the noncommutative Burkholder-Gundy inequalities  was obtained by Jiao {\it et al.} (see \cite[Theorem~7.2]{Jiao-Sukochev-Zanin-Zhou}). Using our general approach, we extend their result to the $\Phi$-moment analogues of the noncommutative Burkholder inequalities. More precisely, if the Orlicz  function is $p$-convex and $2$-concave (for some $1<p<2$), respectively $2$-convex and $q$-concave (for some $2<q<\infty$), then the $\Phi$-moment analogue of \eqref{nc3}, respectively \eqref{nc4}, holds. Our results in this part solve some problems left open in \cite{RW2}.

The paper is organized as follows. In the next section, we collect notions and notation from noncommutative symmetric spaces and noncommutative martingales necessary for the whole paper. Section~3 is devoted to the statements and proofs of our version of noncommutative Davis decompositions for the full range $0<p\leq 1$ (Theorem~\ref{infinite1} and Theorem~\ref{infinite2}) along with some immediate corollaries.  We also  provide  in this section an extension  of the Davis decomposition to the case of noncommutative symmetric spaces (Theorem~\ref{E-davis}). In the last section, we  give the main applications in the forms of various Burkholder/Rosenthal  inequalities for martingales in noncommutative symmetric spaces and  their modular versions.


\section{Preliminaries}


\subsection{Noncommutative symmetric spaces}

Throughout this paper, $\M$ will always denote a semifinite von Neumann algebra equipped with a faithful normal semifinite trace $\T$.   $L_0(\M,\T)$ denotes  the associated topological $*$-algebra of measurable operators  and $\mu(x)$  the  generalized singular number of an element  $x \in L_0(\M,\T)$. If $\M$ is the abelian von Neumann algebra $L_\infty(0,\infty)$ with  the trace given by  integration  with respect to Lebesgue  measure, $L_0(\M,\T)$  becomes  the space of  those  measurable  complex functions  on $(0,\infty)$ which are bounded except on a set of finite measure and for  $f\in L_0(\M,\T)$,  $\mu(f)$ is the usual decreasing rearrangement of  $f$. We refer to \cite{PX} for more information on noncommutative integration.

A Banach function space  $(E,\|\cdot\|_E)$ of measurable functions  on the interval $(0,\infty)$ is called \emph{symmetric} if for any $g \in E$ and any $f \in L_0(0,\infty)$ with $\mu(f) \leq \mu(g)$, we have $f \in E$ and $\|f\|_E \leq \|g\|_E$.
For such a space  $E$, we define the  corresponding  noncommutative space by setting:
\begin{equation*}
E(\M, \T) = \big\{ x \in
L_0(\M,\T)\ : \ \mu(x) \in E \big\}. 
\end{equation*}
Equipped with the norm
$\|x\|_{E(\M,\T)} := \| \mu(x)\|_E$,   $E(\M,\T)$ becomes a complex Banach space (\cite{Kalton-Sukochev,X}) and is usually referred to as the \emph{noncommutative symmetric space} associated with $\M$ and  $E$. An extensive discussion of the various properties of such spaces can be found in \cite{DDP1,DDP4, DDP3,PX3,X}.
 We remark that  if $1\leq p<\infty$ and $E=L_p(0, \infty)$, then $E(\M, \T)=L_p(\M,\T)$  where $L_p(\M,\T)$ is  the usual noncommutative $L_p$-space associated with  $(\M,\T)$. 

In this paper, we will only consider  symmetric spaces that are interpolations of the couple $(L_p, L_q)$ for $1\leq p<q\leq \infty$. 
For a given compatible  Banach couple  $(X, Y)$, we recall that a Banach space $Z$ is called  an \emph{interpolation space} if $X \cap Y \subseteq Z \subseteq X +Y$ and whenever  a bounded linear operator $T: X + Y \to X + Y$ is such that $T(X) \subseteq X$ and $T(Y) \subseteq Y$, we have $T(Z)\subseteq Z$
and $\|T:Z\to Z\|\leq C\max\{\|T :X \to X\| ,\; \|T: Y \to Y\|\}$ for some constant $C$. 
In this case, we write $Z\in {\rm Int}(X,Y)$.
  We refer to \cite{BENSHA,BL,KaltonSMS} for more unexplained definitions and terminology from interpolation. 
  We record here two facts   that  we will use repeatedly. The first is the fact that interpolation lifts to noncommutative symmetric spaces. More precisely, we have:

\begin{lemma}[\cite{PX3}]\label{Operator-interpolation}  Let $1\leq p<q\leq \infty$. Assume that $E \in {\rm Int}(L_p, L_q)$ and  $\M$ and $\N$ are semifinite von Neumann algebras. Let $T: L_p(\M) + L_q(\M) \to L_p(\N) +L_q(\N)$ be a linear operator such that $T: L_p(\M) \to L_p(\N)$ and $T: L_q(\M) \to L_q(\N)$ are bounded. Then $T$ maps $E(\M)$ into $E(\N)$ and the resulting operator $T: E(\M) \to E(\N)$ is bounded and satisfies
\[
\big\| T:E(\M) \to E(\N)\big\| \leq C \max\big\{ \big\| T: L_p(\M) \to L_p(\N)\big\|, \big\| T: L_q(\M) \to L_q(\N)\big\| \big\},
\]
where $C$ is the interpolation constant of $E$ relative to the couple $(L_p, L_q)$.
 \end{lemma}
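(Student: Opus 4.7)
The plan is to reduce this noncommutative interpolation statement to its commutative counterpart by passing through the Peetre $K$-functional, exploiting the fact that the couple $(L_p(\M),L_q(\M))$ is a Calder\'on couple whose $K$-functional is essentially computed by the generalized singular value map $y\mapsto\mu(y)$.

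First I would record the trivial operator estimate at the level of $K$-functionals. For any $x\in L_p(\M)+L_q(\M)$ and any admissible decomposition $x=a+b$ with $a\in L_p(\M)$ and $b\in L_q(\M)$, writing $Tx=Ta+Tb$ gives
\[
\|Ta\|_{L_p(\N)}+t\,\|Tb\|_{L_q(\N)}\leq M\bigl(\|a\|_{L_p(\M)}+t\,\|b\|_{L_q(\M)}\bigr),
\]
where $M:=\max\{\|T\|_{L_p(\M)\to L_p(\N)},\|T\|_{L_q(\M)\to L_q(\N)}\}$. Taking the infimum over all such decompositions yields
\[
K\bigl(t,Tx;L_p(\N),L_q(\N)\bigr)\leq M\cdot K\bigl(t,x;L_p(\M),L_q(\M)\bigr),\quad t>0.
\]

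Next I would invoke the Pisier--Xu identification \cite{PX3} that the noncommutative couple $(L_p(\M),L_q(\M))$ is $K$-closed in the commutative couple $(L_p(0,\infty),L_q(0,\infty))$ via $y\mapsto\mu(y)$, so that up to a universal constant
\[
K\bigl(t,y;L_p(\M),L_q(\M)\bigr)\sim K\bigl(t,\mu(y);L_p(0,\infty),L_q(0,\infty)\bigr).
\]
Combined with the previous display this gives $K(t,\mu(Tx);L_p,L_q)\lesssim M\cdot K(t,\mu(x);L_p,L_q)$ at the commutative level. Since $(L_p,L_q)$ is itself a Calder\'on couple by the classical Calder\'on--Mityagin theorem, this $K$-majorization is realized by a bounded linear operator $S$ on $L_p(0,\infty)+L_q(0,\infty)$ with $\|S\|_{L_r\to L_r}\lesssim M$ for $r=p,q$ and with $S\mu(x)$ dominating $\mu(Tx)$ in the sense of symmetric norms. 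Because $E\in{\rm Int}(L_p,L_q)$ on the commutative side and $E$ is symmetric, $S$ extends to a bounded operator on $E$ with norm at most $C\cdot M$, and one concludes
\[
\|Tx\|_{E(\N,\T)}=\|\mu(Tx)\|_E\leq\|S\mu(x)\|_E\leq C\cdot M\,\|\mu(x)\|_E=C\cdot M\,\|x\|_{E(\M,\T)}.
\]

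The main obstacle is the $K$-closedness identity in the second step: it is far from obvious that the rearrangement map $y\mapsto\mu(y)$ essentially preserves the $K$-functional of the couple $(L_p(\M),L_q(\M))$, since optimal decompositions of $y$ need not be simultaneously diagonalizable. This is the substantive content of \cite{PX3} and rests on spectral projection arguments together with majorization theory for $\T$-measurable operators. Once that identity is granted, the rest is a soft combination of the commutative Calder\'on--Mityagin theorem with the abstract definition of interpolation space (the closed graph theorem supplying the constant $C$).
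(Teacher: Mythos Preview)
The paper does not prove this lemma; it is stated with a citation to \cite{PX3} and used as a black box. So there is no ``paper's own proof'' to compare against, and your sketch is in effect a reconstruction of the argument behind the cited result.

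Your outline is essentially correct and is the standard route: reduce to a commutative $K$-functional inequality via the identification $K(t,y;L_p(\M),L_q(\M))\sim K(t,\mu(y);L_p,L_q)$, then use that $(L_p,L_q)$ is a Calder\'on couple so that every $E\in{\rm Int}(L_p,L_q)$ is $K$-monotone. Two small points. First, the Calder\'on--Mityagin theorem concerns the couple $(L_1,L_\infty)$; that $(L_p,L_q)$ is a Calder\'on couple for general $1\le p<q\le\infty$ is due to Sparr (and Lorentz--Shimogaki in special cases), so the attribution should be adjusted. Second, once you know $(L_p,L_q)$ is Calder\'on you do not actually need to manufacture the auxiliary operator $S$: $K$-monotonicity of $E$ gives $\|\mu(Tx)\|_E\lesssim M\,\|\mu(x)\|_E$ directly from the $K$-functional inequality, which is a slightly cleaner way to close the argument. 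Your identification of the substantive step---the $K$-closedness of the noncommutative couple in the commutative one via $\mu$---is exactly right; that is the content of \cite{PX3} and is what makes the lemma nontrivial.
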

 
The second is the fact that  any function space $E\in {\rm Int}(L_p,L_q)$ can be described by a
 concrete interpolation method  involving  the notions of $K$-functionals and $J$-functionals. We only describe here a version that we need. First, we recall that for a compatible couple $(X,Y)$, the $J$-functional of $z \in X \cap Y$ is given by 
 \[
 J(x,t;X,Y)=\max\big\{\big\|z\big\|_X, t\big\|z\big\|_Y\big\}, \quad t>0.
 \]
The dual functional called $K$-functional  of $z \in X +Y$ is given by
\[
K(z,t; X,Y) =\inf\big\{ \big\|x\big\|_X + t\big\|y\big\|_Y : z=x +y\big\}, \quad t>0.
\] 
 Fix  $(X, Y)$   and  a symmetric Banach function space $F$ on $(0,\infty)$. For $x \in X +Y$, let  $x=
\sum_{\nu \in \mathbb{Z}} u_\nu$ be a (discrete) representation of $x$ and set:
\[
\underline{j}\big(\{u_\nu\}_{\nu},t\big) =\sum_{\gamma\geq \nu+1} 2^{-\gamma} J(u_\gamma, 2^\gamma) \quad \text{for} \  t\in [2^\nu, 2^{\nu+1}).
\]
We define  the interpolation space $(X, Y)_{F, \underline{j}}$ to be the space of elements $x\in X +Y$  such that
\[
\big\|x\big\|_{F, \underline{j}} :=\inf\Big\{ \Big\| \underline{j}(\{u_\nu\}_\nu, \cdot)\Big\|_F\Big\} <\infty
\]
with the infimum being  taken over all representations of $x$ as above. By combining results  of Brudnyi and Krugliak (see \cite[Theorem~6.3]{KaltonSMS}),  \cite{Bennett1}, and \cite[Corollary~2.2]{PX3}, we derive the following general result:

\begin{lemma}\label{interpolation1}
Let  $1\leq p<q \leq \infty$ and  $E$  be a  symmetric Banach  function space on $(0,\infty)$ with $E\in {\rm Int}(L_p, L_q)$.
 There exists a symmetric Banach  function space $F$ on $(0,\infty)$    so that for every semifinite  von Neumann algebra $(\N,\sigma)$,
\[
E(\N)=\big(L_p(\N), L_q(\N)\big)_{F,\underline{j}},
\]
with  equivalent norms depending only on $E$, $p$, and $q$.
\end{lemma}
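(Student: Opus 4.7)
The plan is to assemble the result from the three ingredients named in the statement, in sequence.

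First, apply the Brudnyi-Krugliak $K$-divisibility theorem as stated in \cite[Theorem~6.3]{KaltonSMS} to the compatible couple $(L_p, L_q)$ on $(0,\infty)$. Since $E \in \mathrm{Int}(L_p, L_q)$, $K$-divisibility supplies a Banach lattice parameter $F_0$ such that $E = (L_p, L_q)_{F_0, \underline{j}}$ with equivalent norms; that is, every $f \in E$ admits a discrete representation $f = \sum_\nu u_\nu$ with $\bigl\|\underline{j}(\{u_\nu\}_\nu,\cdot)\bigr\|_{F_0}$ comparable to $\|f\|_E$. This parameter is, however, not a priori rearrangement invariant.

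Second, upgrade $F_0$ to a symmetric Banach function space $F$ on $(0,\infty)$ by invoking Bennett's analysis \cite{Bennett1}. The essential input is that the commutative $J$-functional
\[
J(u,t;L_p,L_q) = \max\bigl\{\|\mu(u)\|_p,\, t\|\mu(u)\|_q\bigr\}
\]
depends only on the decreasing rearrangement $\mu(u)$, so the $\underline{j}$-norm is unchanged when one replaces a representation $\{u_\nu\}$ by any equimeasurable reindexing. Combined with the standard bridge between a discrete lattice parameter on $\mathbb{Z}$ and a continuous function parameter on $(0,\infty)$ compatible with the dyadic cutoffs of $\underline{j}$, this lets one replace $F_0$ by a genuinely symmetric Banach function space $F$ on $(0,\infty)$ such that $E = (L_p, L_q)_{F,\underline{j}}$, with equivalence constants depending only on $E$, $p$, and $q$.

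Third, transfer this representation to the noncommutative setting by means of \cite[Corollary~2.2]{PX3}. That corollary states that the $K$- and $J$-functionals on the noncommutative couple $(L_p(\N), L_q(\N))$ agree, up to constants depending only on $p$ and $q$, with the corresponding commutative functionals of the singular value function $\mu(x)$ relative to $(L_p, L_q)$. Writing any $x \in L_p(\N)+L_q(\N)$ as $x = \sum_\nu u_\nu$ with $u_\nu \in L_p(\N) \cap L_q(\N)$ and computing $J$-functionals termwise, one deduces
\[
x \in \bigl(L_p(\N), L_q(\N)\bigr)_{F,\underline{j}} \iff \mu(x) \in (L_p,L_q)_{F,\underline{j}} = E,
\]
with equivalent norms. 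This gives $E(\N) = (L_p(\N), L_q(\N))_{F,\underline{j}}$ with constants independent of $\N$.

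The principal obstacle is the second step: converting the abstract parameter produced by $K$-divisibility into a symmetric function space on the half-line while preserving norm equivalence, and in the specific dyadic-$J$-method formulation used in the statement. Steps one and three are essentially direct invocations, and the symmetrization itself is routine once the rearrangement invariance of $J$ on $(L_p, L_q)$ is in hand; but the discrete-to-continuous bridging and the verification that the resulting $F$ is both symmetric and compatible with the $\underline{j}$-formalism require care, and this is the technical heart of the lemma.
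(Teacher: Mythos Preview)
Your proposal is correct and follows exactly the paper's approach: the paper does not give a proof of this lemma at all, but simply states that it follows ``by combining results of Brudnyi and Krugliak (see \cite[Theorem~6.3]{KaltonSMS}), \cite{Bennett1}, and \cite[Corollary~2.2]{PX3},'' which is precisely the three-step assembly you outline. Your elaboration of how the pieces fit together is more detailed than anything the paper provides; the only slight imprecision is in step three, where the transfer really goes through the $K$-functional identification $K(t,x;L_p(\N),L_q(\N)) \approx K(t,\mu(x);L_p,L_q)$ followed by the equivalence of $K$- and $J$-methods, rather than a direct termwise comparison of representations, but this is a minor matter of exposition and the overall argument is sound.
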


\subsection{Martingales and Hardy spaces}

We now briefly describe the general setup for  martingales in noncommutative symmetric spaces.
  Denote by $(\M_n)_{n \geq 1}$ an
increasing sequence of von Neumann subalgebras of ${\M}$
whose union  is weak*-dense in
$\M$. For $n\geq 1$, we assume that there exists a trace preserving conditional expectation ${\E}_n$ 
from ${\M}$ onto  ${\M}_n$.  It is well-known that if  $\T_n$  denotes the restriction of $\T$ on $\M_n$, then $\E_n$ extends to a contractive projection from $L_p(\M,\T)$ onto $L_p(\M_n, \T_n)$ for all $1\leq p \leq \infty$. More generally, if $E$ is a symmetric Banach function space on $(0,\infty)$  that belongs to ${\rm Int}(L_1, L_\infty)$, then  for every $n\geq 1$,   $\E_n$ is bounded  from $E(\M,\T)$ onto $E(\M_n,\T_n)$.  

\begin{definition}
A sequence $x = (x_n)_{n\geq 1}$ in $L_1(\M)+\M$ is called \emph{a
noncommutative martingale} with respect to $({\M}_n)_{n \geq
1}$ if $\mathcal{E}_n (x_{n+1}) = x_n$ for every $n \geq 1.$
\end{definition}
If in addition, all $x_n$'s belong to $E(\M)$  then  $x$ is called an $E(\M)$-martingale.
In this case, we set
\begin{equation*}\| x \|_{E(\M)} = \sup_{n \geq 1} \|
x_n \|_{E(\M)}.
\end{equation*}
If $\| x \|_{E(\M)} < \infty$, then $x$ is called
a bounded $E(\M)$-martingale. 

Let $x = (x_n)$ be a noncommutative martingale with respect to
$(\M_n)_{n \geq 1}$.  Define $dx_n = x_n - x_{n-1}$ for $n
\geq 1$ with the usual convention that $x_0 =0$. The sequence $dx =
(dx_n)$ is called the \emph{ martingale difference sequence} of $x$. A martingale
$x$ is called  \emph{a finite martingale} if there exists $N$ such
that $d x_n = 0$ for all $n \geq N.$
In the sequel, for any operator $x\in L_1(\M)+\M$,  we denote $x_n=\E_n(x)$ for $n\geq 1$.
 We observe that conversely,  if $E\in {\rm Int}(L_p,L_q)$ for $1<p\leq q <\infty$ and  satisfies the Fatou property, then any  bounded $E(\M)$-martingale $x=(x_n)_{n\geq 1}$ is of the form $(\E_n(x_\infty))_{n\geq 1}$ where  $x_\infty \in E(\M)$ satisfying $\|x\|_{E(\M)} \approx_E \|x_\infty\|_{E(\M)}$, with equality if $E$ is an exact interpolation space.

Let us now  review the definitions of the square functions and Hardy spaces of noncommutative martingales.
Following \cite{PX}, we define  the following column square functions
relative to a  martingale $x = (x_n)$:
 \[
 S_{c,n} (x) = \Big ( \sum^n_{k = 1} |dx_k |^2 \Big )^{1/2}, \quad
 S_c (x) = \Big ( \sum^{\infty}_{k = 1} |dx_k |^2 \Big )^{1/2}\,.
 \]
For $0 <p < \infty$, the column  martingale Hardy space 
 $\mathcal{H}_p^c (\mathcal{M})$ is defined to be the space of all martingales $x$ for which $S_c(x)$  belongs to $L_p(\M,\T)$.  More generally, if $E$ is a symmetric Banach function space,
we define  $\mathcal{H}_E^c (\mathcal{M})$ to  be the space of all martingales $x=(x_n)_{n\geq 1}$ in $E(\M)$ for which  $S_c(x)$ belongs to $E(\M)$.   $\mathcal{H}_E^c (\mathcal{M})$  becomes  a Banach space when equipped with the norm
\[
\| x \|_{\mathcal{H}_E^c}= \| S_c(x)\|_{E(\M)}\,. \]

We now consider the conditioned version of $\H_p^c$  developed in \cite{JX}.
Let $x = (x_n)_{n \geq 1}$ be a martingale in $L_2(\M) +\M$.
We set (with the convention that $\E_0=\E_1$):
 \[
 s_{c,n} (x) = \Big ( \sum^n_{k = 1} \E_{k-1}|dx_k |^2 \Big )^{1/2}, \quad
 s_c (x) = \Big ( \sum^{\infty}_{k = 1} \E_{k-1}|dx_k |^2 \Big )^{1/2}\,.
 \]
For $2\leq p <\infty$, the column conditioned   martingale Hardy space 
$\h_p^c(\M)$  is defined to be  the space of all martingale $x$ for which $s_c(x)$ belongs to $L_p(\M)$ equipped with the norm  $\| x \|_{\h_p^c}=\| s_c (x) \|_p$. More generally, if $E$ is a symmetric Banach function space with the Fatou property and $E \subseteq L_2 +L_\infty$, we define $\h_E^c (\mathcal{M})$ to  be the set of all martingale $x=(x_n)_{n\geq 1}$ in $E(\M)$ for which  $s_c(x)$ belongs to $E(\M)$).  $\h_E^c (\mathcal{M})$  becomes  a Banach space when equipped with the norm
\[
\| x \|_{\h_E^c}= \| s_c(x)\|_{E(\M)}\,. \]

For  $0< p < 2$ or $E \nsubseteq L_2 +L_\infty$, the definition is more involved.
In this range, we
define $\h_p^c (\mathcal{M})$ to be  the completion of the linear space of finite martingales in $L_p(\M) \cap \M$ under the (quasi) norm $\| x \|_{\h_p^c}=\| s_c (x) \|_p$. We postpone the description of $\h_E^c(\M)$ until after the next discussion.

In the sequel, we will use  more general versions of  these spaces by considering arbitrary sequences in place of martingale difference sequences. For $0<p \leq \infty$,
and a finite sequence $a=(a_n)_{n\geq 1}$ in $L_p(\M)$, we set 
\[
\big\|a\big\|_{L_p(\M;\ell_2^c)}=\Big\| \big(\sum_{n\geq 1} |a_n|^2\big)^{1/2}\Big\|_p.
\] 
The completion (relative to the w*-topology for $p=\infty$) of the space of finite sequences in $L_p(\M)$ equipped with the (quasi) norm $\big\| \cdot \big\|_{L_p(\M;\ell_2^c)}$ will be denoted by $L_p(\M;\ell_2^c)$.   We   will also need  the conditioned $L_p$-spaces which is defined as follows:
for  $0< p \leq \infty$ and  a finite  sequence $a=(a_n)_{n\geq 1}$ in $ L_p(\M)\cap \M$, we set
\[
\big\|a\big\|_{L_p^{\rm cond}(\M;\ell_2^c)} = \Big\| \big( \sum_{n\geq 1} \E_{n-1} (a_n^* a_n) \big)^{1/2}\Big\|_p.
\]
For $0<p<\infty$, the completion of the space of finite  sequences in $ L_p(\M)\cap \M$ equipped with the (quasi) norm $\| \cdot\|_{L_p^{\rm cond}(\M;\ell_2^c)} $ will be denoted by $L_p^{\rm cond}(\M;\ell_2^c)$.  For $p=\infty$, we may define $L_\infty^{\rm cond}(\M;\ell_2^c)$ as the set of all sequences $a=(a_n)_{n\geq1}$  in $\M$ for which the increasing sequence $\big(\sum_{k=1}^n \E_{k-1}(a_k^*a_k) \big)_{n\geq 1}$ is bounded in $\M$. In this case,
\[
\big\|a\big\|_{L_\infty^{\rm cond}(\M;\ell_2^c)} = \sup_{n\geq 1}\Big\| \big( \sum_{k=1}^n \E_{k-1} (a_k^* a_k) \big)^{1/2}\Big\|_\infty.
\]

A very crucial result of Junge \cite{Ju} states that   there  exists an isometric  embedding  of $\h_p^c(\M)$  into  a noncommutative $L_p$-space. Namely, for $0< p \leq \infty$, we have an isometry
\[
U: L_p^{\rm cond}(\M;\ell_2^c) \to L_p(\M \overline{\otimes} B(\ell_2(\mathbb{N}^2)))
\]
with the property that if  $a=(a_n) \in L_p^{\rm cond}(\M;\ell_2^c)$ and $b=(b_n) \in L_q^{\rm cond}(\M;\ell_2^c)$ with $1/p +1/q\le 1$, then 
\[
U(a)^*U(b)=\big( \sum_{n\geq 1} \E_{n-1}(a_n^* b_n) \big) \otimes e_{1,1} \otimes e_{1,1}\,,
\]
where $(e_{i,j})_{i,j\geq 1}$  denotes the unit matrices in $B(\ell_2(\mathbb{N}))$. If we denote by $\cal{D}_c : \h_p^c(\M) \to L_p^{\rm cond}(\M;\ell_2^c)$ the natural map $x \mapsto (dx_n)$, then its composition with $U$ 
 induces  the isometric embedding:
\[
U\cal{D}_c: \h_p^c(\M) \to L_p(\M \overline{\otimes} B(\ell_2(\mathbb{N}^2)))
\]
with the property that  if $x \in \h_p^c(\M)$, $y\in \h_q^c(\M)$, and $1/p +1/q\leq 1$ then 
\[
U\cal{D}_c(x)^* U\cal{D}_c(y)=\big( \sum_{n\geq 1} \E_{n-1}(dx_n^* dy_n) \big) \otimes e_{1,1} \otimes e_{1,1}.
\]
In particular, for $x \in \h_2^c(\M)$,  we have 
\begin{equation}\label{c-square}
|U\cal{D}_c(x)|^2 =(s_c(x))^2 \otimes e_{1,1} \otimes e_{1,1}.
\end{equation}

 Now let $E$ be a symmetric Banach function space with the Fatou property.  We define $\h_E^c(\M)$ to be the set  of all martingales $x=(x_n)_{n\geq 1} \in \h_1^c(\M) + \h_\infty^c(\M)$ for which $U\cal{D}_c(x) \in  E(\M \overline{\otimes} B(\ell_2(\mathbb{N}^2)))$. Then for $x \in \h_E^c(\M)$, we set
\[
\big\|x \big\|_{\h_E^c} :=\big\| U\cal{D}_c(x)\big\|_{E(\M \overline{\otimes} B(\ell_2(\mathbb{N}^2)))}.
\] 
Equipped with  $\|\cdot\|_{\h_E^c}$,  $\h_E^c(\M)$ is a Banach space
 and  
  $U\cal{D}_c$ extends to an isometric embedding of $\h_E^c(\M)$ into $E(\M \overline{\otimes} B(\ell_2(\mathbb{N}^2)))$. We note that if  $E \subseteq  L_2 +L_\infty$, then  the two definitions of $\h_E^c(\M)$ coincide.
  
  \smallskip
  
All definitions and statements above admit corresponding row versions by passing to adjoints. For instance, the row square function of a martingale $x$ is defined as $S_r(x)=S_c(x^*)$, and the row Hardy space $\mathcal{H}_p^r (\mathcal{M})$ consists of all martingales $x$ such that $x^*\in \mathcal{H}_p^c (\mathcal{M})$.
  \smallskip
  
A third type of Hardy spaces that we will use in the sequel are the diagonal Hardy spaces. For $0<p\leq \infty$, we recall that the diagonal Hardy space $\h_p^d(\M)$  is the subspace of $\ell_p(L_p(\M))$ consisting of martingale difference sequences. This definition can be easily extended to the case of symmetric spaces  by setting   $\h_E^d(\M)$ as   the space of all martingales whose martingale difference sequences belong to $E(\M \overline{\otimes} \ell_\infty)$, equipped with the norm $\|x\|_{\h_E^d} := \|(dx_n)\|_{E(\M \overline{\otimes} \ell_\infty)}$.  We will denote by $\cal{D}_d$ the isometric embedding of $\h_E^d$ into $E(\M \overline{\otimes} \ell_\infty)$ given by $x \mapsto (dx_n)_{n\geq 1}$.

We will also make use of  another type of diagonal spaces developed in  \cite{Junge-Perrin}. For $0<p<2$,  a sequence $x=(x_n)$ belongs to $L_p(\M;\ell_1^c)$ if there exist $b_{k,n} \in L_2(\M)$ and $a_{k,n} \in L_q(\M)$ where $1/p=1/2 +1/q$ such that  for every $n\geq 1$,
\begin{equation}\label{factor}
x_n =\sum_{k\geq 1} b_{k,n}^* a_{k,n},
\end{equation}
$\sum_{k,n\geq 1} |b_{k,n}|^2 \in L_1(\M)$, and $\sum_{k,n \geq 1}|a_{k,n}|^2 \in L_{q/2}$. We equip $L_p(\M;\ell_1^c)$ with the (quasi) norm:
\[
\big\| x\big\|_{L_p(\M;\ell_1^c)} =\inf \Big\{ \Big(\sum_{k,n \geq 1} \big\| b_{k,n}\big\|_2^2 \Big)^{1/2} \Big\| \Big( \sum_{k,n \geq 1} |a_{k,n}|^2 \Big)^{1/2} \Big\|_q \Big\},
\]
where the infimum is taken over all factorizations \eqref{factor}. As in \cite[Lemma~6.1.2]{Junge-Perrin}, the unit ball of  $L_p(\M;\ell_1^c)$ coincides with  the set of all sequences $(\b_n \a_n)$ satisfying the following inequality: 
\[
\Big( \sum_{n\geq 1}\|\b_n\|_2^2 \Big)^{1/2} \Big\| \big( \sum_{n\geq 1} |\a_n|^2 \big)^{1/2}\Big\|_q \leq 1.
\]
The following facts are clear from the definitions:  $L_1(\M;\ell_1^c)=\ell_1(L_1(\M))$, $L_p(\M,\ell_1^c) \subseteq \ell_p(L_p(\M))$ for $1<p<2$, and $\ell_p(L_p(\M)) \subseteq L_p(\M;\ell_1^c)$ for $0<p<1$.
The diagonal space  $\h_p^{1_c}(\M)$  is the subspace of 
$L_p(\M;\ell_1^c)$  consisting of  martingale difference sequences.


\section{Davis-type decompositions}


The primary goal  of this section is to provide   extensions of  Davis' decomposition for  adapted sequences in $L_p(\M;\ell_2^c)$ for all $0<p < 2$. Our first result deals with the case  $2/3 \leq p <2$. In this range, we obtain a decomposition with simultaneous control of norms.


\begin{theorem}\label{infinite1} Let $2/3 \leq p < 2$ and   $\xi=(\xi_n)_{n\geq 1}$ be an adapted sequence that belongs to  $L_p(\M;\ell_2^c)\cap L_\infty(\M;\ell_2^c)$. Then there exist   two  adapted sequences $y=(y_n)_{n\geq 1}$ and $z=(z_n)_{n\geq 1}$  such that:
\begin{enumerate}[{\rm(i)}]
\item $\xi=y +z$;
\item $\big\|y\big\|_{\ell_p(L_p(\M))} + \big\| z\big\|_{L_p^{\rm cond}(\M;\ell_2^c)} \leq 2\big(\frac{2}{p}\big)^{1/2}\big\|\xi\big\|_{L_p(\M;\ell_2^c)}$;
\item $\big\|y\big\|_{L_q(\M;\ell_2^c)} + \big\|z\big\|_{L_q(\M;\ell_2^c)} \leq 3 \big\|\xi\big\|_{L_q(\M;\ell_2^c)}$ for every $2\leq q\leq \infty$.
\end{enumerate}
\end{theorem}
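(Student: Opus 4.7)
My plan is to construct the decomposition via spectral projections associated with the column square function process $S_{c,n}:=\bigl(\sum_{k\leq n}|\xi_k|^2\bigr)^{1/2}$. Crucially, both $S_{c,n-1}\in\M_{n-1}$ and $S_{c,n}\in\M_n$ are accessible at step $n$, so one can build projections of two types: $\M_{n-1}$-measurable projections like $f_n^{(m)}:=\chi_{[0,2^m]}(S_{c,n-1})$ (which commute with $\E_{n-1}$, suited for the conditioned bound) and $\M_n$-measurable projections like $e_n^{(m)}:=\chi_{[0,2^m]}(S_{c,n})$ (which detect the crossing of a dyadic level at step $n$ via $q_n^{(m)}:=f_n^{(m)}-e_n^{(m)}\geq 0$). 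The decomposition $\xi_n=y_n+z_n$ will assemble the ``crossing'' blocks into $y$ and the ``still-below-current-level'' blocks into $z$; both sequences remain adapted as each ingredient lies in $\M_n$.

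Part (iii) follows immediately from the fact that $y$ and $z$ arise as right multiplications of $\xi$ by diagonal matrices of norm $\leq 1$, so the contraction principle on $L_q(\M;\ell_2^c)$ for $q\in[2,\infty]$ gives each of $\|y\|_{L_q(\M;\ell_2^c)}$ and $\|z\|_{L_q(\M;\ell_2^c)}$ bounded by $\|\xi\|_{L_q(\M;\ell_2^c)}$, well within the prescribed constant $3$.

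The bulk of the work is in part (ii). The diagonal bound on $y$ should follow from a layer-cake argument: each crossing block at level $m$ is supported by a projection of $\tau$-trace controlled by $\tau\bigl(\chi_{(2^m,\infty)}(S_c(\xi))\bigr)$, and its operator mass is controlled by $2^m$, yielding
\[
\sum_n\|y_n\|_p^p\ \lesssim\ \sum_{m\in\Z}2^{mp}\,\tau\bigl(\chi_{(2^m,\infty)}(S_c(\xi))\bigr)\ \sim\ \|S_c(\xi)\|_p^p\ =\ \|\xi\|_{L_p(\M;\ell_2^c)}^p.
\]
For the conditioned bound on $z$, the key is that its defining projection is $\M_{n-1}$-measurable, so it commutes with $\E_{n-1}$; combined with the spectral inequality $f_n^{(m)}S_{c,n-1}^2 f_n^{(m)}\leq 2^{2m}f_n^{(m)}$, an Abel rearrangement across dyadic levels, and a noncommutative H\"older step at exponents $2/p$ and its conjugate, this produces the desired bound on $\bigl\|(\sum_n\E_{n-1}|z_n|^2)^{1/2}\bigr\|_p$ in terms of $\|S_c(\xi)\|_p$, with the factor $(2/p)^{1/2}$ emerging from the H\"older step.

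The main obstacle is in part (ii): executing both estimates with the sharp constant $2(2/p)^{1/2}$. The dyadic summation must be arranged so that adjacent-level contributions telescope rather than accumulate, and the noncommutative H\"older step needs the conjugate exponent $2/p$ to remain no larger than $3$---this is exactly the restriction $p\geq 2/3$ appearing in the statement. The complementary range $0<p<2/3$ cannot be handled by this construction and is the subject of the separate Theorem~\ref{infinite2}.
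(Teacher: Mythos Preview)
Your approach has a fundamental noncommutative obstruction. You assert that the ``crossing'' projections $q_n^{(m)}:=f_n^{(m)}-e_n^{(m)}=\chi_{[0,2^m]}(S_{c,n-1})-\chi_{[0,2^m]}(S_{c,n})$ are $\geq 0$. This is true classically, but in the present setting $S_{c,n-1}$ and $S_{c,n}$ do \emph{not} commute (they differ by $|\xi_n|^2$ inside the square root), and operator monotonicity of indicator functions fails: from $S_{c,n-1}\le S_{c,n}$ one cannot conclude $\chi_{[0,2^m]}(S_{c,n-1})\ge\chi_{[0,2^m]}(S_{c,n})$. Consequently $q_n^{(m)}$ need not be positive, need not be a projection, and the entire dyadic/stopping-time machinery (support estimates, layer-cake, telescoping) collapses. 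This is precisely the obstacle that forces noncommutative Davis decompositions to use a different mechanism; the paper circumvents it by the algebraic splitting $y_n=\xi_n w_n^{-1}(w_n-w_{n-1})$, $z_n=\xi_n w_n^{-1}w_{n-1}$ with $w_n=\varsigma_n^{1-p/2}$, together with the key $L_2$-bound $\sum_n\|\xi_n w_n^{-1}\|_2^2\le(2/p)\|\xi\|_{L_p(\M;\ell_2^c)}^p$, and the restriction $p\ge 2/3$ enters because the auxiliary exponent $r=2p/(2-p)$ must satisfy $r\ge 1$ so that $\bigl\|\bigl(\sum_n(w_n-w_{n-1})^r\bigr)^{1/r}\bigr\|_r\le\bigl\|\sum_n(w_n-w_{n-1})\bigr\|_r$ holds.

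Your argument for (iii) is also incorrect as stated. Right multiplication of $\xi_n$ by an $n$-dependent contraction $a_n$ does \emph{not} give a contraction on $L_q(\M;\ell_2^c)$: one would need $\sum_n a_n^*|\xi_n|^2 a_n\le\sum_n|\xi_n|^2$, which fails unless the $a_n$ commute with $|\xi_n|^2$. The paper handles (iii) via a separate three-lines lemma (Lemma~\ref{row-lem}) combined with the pointwise bound $w_n^{-1}|\xi_n|^2 w_n^{-1}\le\varsigma^p$, not by a contraction principle.
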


For the proof of the theorem, we will need the following lemma  which is an extension of \cite[Lemma~1.1]{PX}.

\begin{lemma}\label{row-lem} Let $2\leq p\leq \infty$ and $1/p=1/q +1/r$. For any sequence $a=(a_n)_{n\geq 1}$ in $L_q(\M)$ and any $A \in L_r(\M)$, we set $B(a,A)=(a_nA)_{n\geq 1}$. Then
\begin{equation}\label{row-Holder}
\Big\| B(a,A)\Big\|_{L_p(\M;\ell_2^r)} \leq \max\Big\{ \big\|a\big\|_{L_q(\M;\ell_2^c) }, \;\big\|a\big\|_{L_q(\M;\ell_2^r)}\Big\} \big\|A\big\|_r.
\end{equation}
\end{lemma}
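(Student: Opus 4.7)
The plan is to view $T_a: A\mapsto B(a,A)$ as a linear operator, for fixed $a$, from $L_r(\M)$ into $L_p(\M;\ell_2^r)$, and to obtain \eqref{row-Holder} by complex interpolation between two endpoints carefully chosen to lie on the H\"older line $1/p=1/q+1/r$. The two endpoints will produce the column and row norms of $a$ separately, and interpolation will merge them into a geometric mean that is dominated by the maximum.

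For the first endpoint, take $p_0=2$, which forces $r_0=2q/(q-2)$ so that $(r_0/2)'=q/2$. Trace cyclicity gives
\[
\big\|T_a(A)\big\|_{L_2(\M;\ell_2^r)}^2=\Big\|\sum_n a_n AA^*a_n^*\Big\|_1=\T\Big(AA^*\sum_n a_n^*a_n\Big),
\]
and H\"older's inequality in $L_{r_0/2}$ versus its conjugate $L_{q/2}$ yields
\[
\big\|T_a(A)\big\|_{L_2(\M;\ell_2^r)}^2\leq \|A\|_{r_0}^2\,\Big\|\sum_n a_n^*a_n\Big\|_{q/2}=\|A\|_{r_0}^2\,\|a\|_{L_q(\M;\ell_2^c)}^2.
\]
For the second endpoint, take $p_1=q$ and $r_1=\infty$. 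Since $AA^*\leq \|A\|_\infty^2\,I$,
\[
\big\|T_a(A)\big\|_{L_q(\M;\ell_2^r)}^2=\Big\|\sum_n a_n AA^*a_n^*\Big\|_{q/2}\leq \|A\|_\infty^2\,\Big\|\sum_n a_na_n^*\Big\|_{q/2}=\|A\|_\infty^2\,\|a\|_{L_q(\M;\ell_2^r)}^2.
\]
Both endpoint bounds are majorized by $M:=\max\{\|a\|_{L_q(\M;\ell_2^c)},\|a\|_{L_q(\M;\ell_2^r)}\}$ times the relevant $\|A\|_{r_i}$.

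Complex interpolation of $T_a$ between these two endpoints produces, for every $\theta\in[0,1]$, a bounded operator $T_a:L_{r_\theta}(\M)\to L_{p_\theta}(\M;\ell_2^r)$ whose norm is at most $\|a\|_{L_q(\M;\ell_2^c)}^{1-\theta}\|a\|_{L_q(\M;\ell_2^r)}^\theta\leq M$. Interpolation of the target scale uses the standard identification of $L_p(\M;\ell_2^r)$ with the first-row corner (a complemented subspace) of $L_p(\M\overline{\otimes}B(\ell_2))$, so the usual noncommutative $L_p$-interpolation transfers. A short index check confirms $1/p_\theta=1/q+1/r_\theta$ for every $\theta$, and as $\theta$ runs from $0$ to $1$ the pair $(p_\theta,r_\theta)$ sweeps out the full range $p\in[2,q]$ compatible with $1/p=1/q+1/r$; together with the forced bounds $0\leq 1/r\leq 1/p$ this exhausts the admissible set $2\leq p\leq\infty$, proving \eqref{row-Holder}.

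The main subtlety is the choice of the upper endpoint: the naive candidate $(p_1,r_1)=(\infty,\infty)$ fails to lie on the H\"older line $1/p=1/q+1/r$ whenever $q<\infty$, so interpolating with it would violate the Hölder relation assumed in the lemma. Replacing it by $(q,\infty)$ restores the constraint, and at that endpoint the row norm of $a$ emerges naturally from the operator-norm domination of $AA^*$, mirroring how the column norm surfaces at the $L_2$ endpoint through trace cyclicity. Once the endpoints are properly aligned on the H\"older line, the rest is a routine Riesz--Thorin-type interpolation within the noncommutative $L_p$-scale.
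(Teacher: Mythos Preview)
Your proof is correct. Both your argument and the paper's are complex-interpolation proofs, but they are packaged differently. The paper fixes $p$ and dualizes: it chooses $B\geq 0$ in $L_{(p/2)'}$ realizing $\|\sum_n a_nAA^*a_n^*\|_{p/2}$, builds the scalar analytic function $F(z)=\T\big(\sum_n a_n(AA^*)^{\alpha z}a_n^*B^{\beta(1-z)}\big)$, and applies the three lines lemma directly, using the inequalities $\|\sum_n a_nUa_n^*\|_{q/2}\leq\|\sum_n a_na_n^*\|_{q/2}$ and $\|\sum_n a_n^*Ua_n\|_{q/2}\leq\|\sum_n a_n^*a_n\|_{q/2}$ for contractions $U$ to control the boundary values. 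You instead freeze $(q,a)$, view $T_a:A\mapsto(a_nA)$ as a linear operator, and interpolate it between the endpoints $(p_0,r_0)=(2,2q/(q-2))$ and $(p_1,r_1)=(q,\infty)$; the column norm appears at $p_0=2$ via trace cyclicity and the row norm at $r_1=\infty$ via the operator inequality $AA^*\leq\|A\|_\infty^2 I$. Your route is slightly cleaner in that it avoids the explicit dualizing element and the contraction estimates, at the cost of invoking the complex interpolation of the column/row $L_p$-scale (which, as you note, follows from complementation in $L_p(\M\overline\otimes B(\ell_2))$). Both yield the same constant, and your observation that the H\"older constraint forces $p\le q$, so that the interval $[2,q]$ already covers the full range, is exactly what makes the two-endpoint scheme work.
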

\begin{proof}
Clearly, $\big\| B(a,A)\big\|_{L_p(\M;\ell_2^r)} =\big\| \sum_{n\geq 1} a_n AA^* a_n^*\big\|_{p/2}^{1/2}$. Denote by $s$   the conjugate index   of $p/2$. By duality, we may fix  $B \in L_s(\M)$ with $B \geq 0$, $\|B\|_s =1$, and such that 
\begin{equation}\label{psi}
\psi(B)=\Big\| \sum_{n\geq 1} a_n AA^* a_n^*\Big\|_{p/2}= \T\Big(\sum_{n\geq 1} a_n AA^* a_n^* B \Big).
\end{equation}
Set $\alpha=(r/2)/(q/2)'$ and $ \beta= s/(q/2)'$ where $(q/2)'$ denotes  the  conjugate index  of $q/2$. One  can readily verify  that $(1-\alpha^{-1})\beta=1$. We will apply the three lines lemma to the analytic function $F$ defined for $0\leq \Re(z) \leq 1$
by 
\[
F(z)=\T\Big(\sum_{n\geq 1} a_n (AA^*)^{\alpha z} a_n^* B^{\beta(1-z)}\Big).
\]
Let $\theta=\alpha^{-1}$ so that $1-\theta=\beta^{-1}$. Then $0\leq \theta \leq 1$ and $F(\theta) =\psi(B)$. Hence, by the three lines lemma, we have 
\begin{equation}\label{3L}
|\psi(B)|=| F(\theta)|\leq \big(\sup_{t \in \mathbb{R}} |F(it)|\big)^{(1-\theta)} \big(\sup_{t \in \mathbb{R}} |F(1+it)|\big)^\theta.
\end{equation}
Using H\"older's inequality, we have 
\begin{equation}\label{t}
\sup_{t \in \mathbb{R}} |F(it)| \leq \sup\Big\{ \Big\| \sum_{n\geq 1} a_n Ua_n^* \Big\|_{q/2} : U \in \M, \|U\| \leq 1\Big\}.
\end{equation}
On the other hand, using the tracial property of $\T$, we also have 
\begin{equation}\label{1t}
\sup_{t \in \mathbb{R}}|F(1+it)|\leq  \sup\Big\{ \Big\| \sum_{n\geq 1} a_n^* Ua_n^* \Big\|_{q/2} : U \in \M, \|U\| \leq 1\Big\}\cdot \big\|A\big\|_r^{2/\theta}.
\end{equation}
As already noted in \cite{PX}, for every operator $U \in \M$ with  $\|U\|\leq 1$,  we have 
 $$\| \sum_{n\geq 1} a_n^* Ua_n\|_{q/2}\leq \| \sum_{n\geq 1} a_n^* a_n\|_{q/2}\;\text{  and }\;\| \sum_{n\geq 1} a_n Ua_n^*\|_{q/2}\leq \| \sum_{n\geq 1} a_n a_n^*\|_{q/2}.$$
Therefore, by combining \eqref{psi} - \eqref{1t},  the desired inequality \eqref{row-Holder} follows.
\end{proof}

\begin{proof}[Proof of Theorem~\ref{infinite1}]
For $n\geq 1$, let $\varsigma_n^2=\sum_{j=1}^n |\xi_j|^2$ and $\varsigma^2=\sum_{j\geq 1} |\xi_j|^2$. Then $(\varsigma_n)$ is an adapted sequence.
Let $\alpha=1-\frac{p}{2}$. For $n\geq 1$,   we let  $w_n=\varsigma_n^{\alpha}$. By approximation,  we assume that the $w_n$'s are invertible. As in the case of martingales, the following  inequality holds:
\begin{equation}\label{L2}
\sum_{n\geq 1} \big\| \xi_n w_n^{-1} \big\|_2^2 \leq \frac{2}{p}\, \big\| \xi\big\|_{L_p(\M;\ell_2^c)}^p.
\end{equation}
This  is implicit in \cite{Bekjan-Chen-Perrin-Y}. Indeed,   
 $$\big\| \xi_n w_n^{-1} \big\|_2^2=\T\big(|\xi_n|^2 w_n^{-2})=\T\big(\varsigma_n^{p-2}(\varsigma_n^2-\varsigma_{n-1}^2)\big)\leq \frac{2}{p}\, \T\big(\varsigma_{n}^p-\varsigma_{n-1}^p\big),$$
where the last inequality comes from \cite{Bekjan-Chen-Perrin-Y}.

Consider now the following  decomposition of $\xi$: for $n\geq 1$, we set
\begin{equation*}
\left\{\begin{split}
y_n &= \xi_n w_n^{-1}(w_n-w_{n-1}),\\
z_n &=\xi_n w_n^{-1}w_{n-1},
\end{split}\right.
\end{equation*}
where we have taken $w_0=0$.
Clearly, $y=(y_n)$ and $z=(z_n)$ are adapted  sequences and for every $n\geq 1$, we have   $\xi_n =y_n + z_n$.
We claim that  $y$ and $z$ satisfy  $\rm(ii)$ and $\rm(iii)$. The argument for $\rm(ii)$ is similar to the proof of \cite[Proposition~6.1.2]{Junge-Perrin}.

 We begin with the diagonal part. We will verify   that $\|y\|_{\ell_p(L_p(\M))} \leq \big(\frac{2}{p}\big)^{1/2} \|\xi\|_{L_p(\M;\ell_2^c)}$. For this, let $1/p=1/2 +1/r$. By H\"older's inequality and \eqref{L2}, we have
\begin{align*}
\big(\sum_{n\geq 1} \big\| y_n\big\|_p^p \big)^{1/p}&\leq  \big(\sum_{n\geq 1} \big\| \xi_nw_n^{-1}(w_n-w_{n-1}) \big\|_p^p\big)^{1/p} \\
&\leq \Big( \sum_{n\geq 1} \big\|\xi_nw_n^{-1}\big\|_2^2\Big)^{1/2} \Big( \sum_{n\geq 1} \big\|w_n-w_{n-1}\big\|_r^r\Big)^{1/r} \\
&\leq \big(\frac{2}{p}\big)^{1/2} \big\|\xi \big\|_{L_p(\M;\ell_2^c)}^{p/2}  \Big\|\big( \sum_{n\geq 1} (w_n-w_{n-1})^r\big)^{1/r} \Big\|_r.
\end{align*}
The crucial fact here is that when  $2/3\leq p <2$, we have $r\geq1$ and therefore  by \cite{Xu-Nikishin}, we get 
\[
 \Big\| \big(\sum_{n\geq 1} (w_n-w_{n-1})^r \big)^{1/r}\Big\|_r\leq  \Big\| \sum_{n\geq 1} w_n-w_{n-1}\Big\|_r =\|\varsigma^{\alpha}\|_r. 
\]
Moreover, as $r=\frac{2p}{2-p}=\frac{p}{\alpha}$, we have $\|\varsigma^{\alpha}\|_r=\|\varsigma\|_p^\alpha= \|\xi\|_{L_p(\M;\ell_2^c)}^\alpha$. Thus, the above estimate becomes
\begin{equation}\label{y1}
\big\| y\big\|_{\ell_p(L_p(\M))} \leq \big(\frac{2}{p}\big)^{1/2} \big\|\xi \big\|_{L_p(\M;\ell_2^c)}.
\end{equation}
 Let us now show that $z \in L_p^{\rm cond}(\M;\ell_2^c)$.  We have,
\begin{align*}
\big\| z \big\|_{L_p^{\rm cond}(\M;\ell_2^c)} &\leq \big\| \big( \sum_{n\geq 1}\E_{n-1}|\xi_nw_n^{-1}w_{n-1}|^2 \big)^{1/2} \big\|_p\\
&=\big\| \big( \sum_{n\geq 1}w_{n-1}\E_{n-1}|\xi_nw_n^{-1}|^2 w_{n-1} \big)^{1/2} \big\|_p.
\end{align*}
Write $w_{n-1}=v_{n-1}\varsigma^{\alpha}$ for some contraction $v_{n-1}$. Then
\[
\big\| z \big\|_{L_p^{\rm cond}(\M;\ell_2^c)} \leq  \Big\| \varsigma^{\alpha}  \Big( \sum_{n\geq 1}   v_{n-1}^* \E_{n-1}|\xi_nw_n^{-1}|^2 v_{n-1} \Big) \varsigma^{\alpha} \Big\|_{p/2}^{1/2}.
\]
Using H\"older's inequality and \eqref{L2}, we deduce that 
\begin{align*}
\big\| z \big\|_{L_p^{\rm cond}(\M;\ell_2^c)}^2 &\leq \big\|\varsigma^{\alpha}\big\|_{p/\alpha}\Big\| \sum_{n\geq 1}   v_{n-1}^* \E_{n-1}|\xi_nw_n^{-1}|^2 v_{n-1} \Big\|_1  \big\|\varsigma^{\alpha}\big\|_{p/\alpha} \\
&\leq \big\| \xi\big\|_{L_p(\M;\ell_2^c)}^{2\alpha} \sum_{n\geq 1} \big\|\xi_nw_n^{-1}\big\|_2^2\\
&\leq  \frac{2}{p} \big\| \xi\big\|_{L_p(\M;\ell_2^c)}^2.
\end{align*}
This shows that $ \big\| z \big\|_{L_p^{\rm cond}(\M;\ell_2^c)}\leq \big(\frac{2}{p}\big)^{1/2} \big\| \xi\big\|_{L_p(\M;\ell_2^c)}$. Combining the last inequality with \eqref{y1}, we obtain $\rm(ii)$.

\medskip

For $\rm(iii)$, we will  only need to verify  that  for $2\leq q\leq \infty$,  $y \in L_q(\M;\ell_2^c)$.
To that end, we observe that  since for $n\geq 1$, $|\xi_n|^2 \leq \varsigma_n^2=w_n^{2/\alpha}$, it is clear   that  $w_n^{2/\alpha}\leq \varsigma^2$.
This fact implies  that  for  every $n\geq 1$,  $w_n^{-1} |\xi_n|^2 w_n^{-1} \leq w_n^{(2/\alpha)-2}=(w_n^{2/\alpha})^{1-\alpha} \leq \varsigma^{2-2\alpha}=\varsigma^{p}$.
Therefore,  the following estimate clearly follows: 
\begin{align*}
\Big\| \big(\sum_{n\geq 1}| \xi_nw_n^{-1}(w_n-w_{n-1})|^2 \big)^{1/2}\Big\|_q &\leq 
\Big\|  \big( \sum_{n\geq 1}(w_n-w_{n-1}) \varsigma^{p} (w_n-w_{n-1})\big)^{1/2}\Big\|_q\\
&= \Big\| \big\{ (w_n-w_{n-1})\varsigma^{p/2}\big\}_{n\geq 1}\Big\|_{L_q(\M; \ell_2^r)}.
\end{align*}
Using  Lemma~\ref{row-lem} and \cite{Xu-Nikishin}, we deduce that
\begin{align*}
\big\|y\big\|_{L_q(\M ; \ell_2^c)} &\leq \big\|\varsigma^{p/2}\big\|_{q/(1-\alpha)}\,\big\|(w_n-w_{n-1})_{n\geq 1}\big\|_{L_{q/\alpha}(\M;\ell_2^c)}\\
&\leq \big\|\varsigma^{p/2}\big\|_{q/(1-\alpha)}\,\big\| \sum_{n\geq 1}(w_n-w_{n-1}) \big\|_{q/\alpha}\\
&\leq \big\|\xi\big\|_{L_q(\M;\ell_2^c)}^{1-\alpha} \big\| \varsigma^\alpha\big\|_{q/\alpha}\\
&\leq \big\|\xi\big\|_{L_q(\M; \ell_2^c)}\,.
\end{align*}
This proves that 
 $\big\| y \big\|_{L_q(\M;\ell_2^c)}  \leq \big\| \xi \big\|_{L_q(\M;\ell_2^c)}$.
  The corresponding inequality for $z$ easily follows from triangle inequality. Thus the proof is complete.
\end{proof}

We remark that  for $1<p<2$,  the norm used for the  diagonal part  $\| \cdot\|_{\ell_p(L_p(\M))}$  in item $\rm(ii)$ can be improved to $\| \cdot\|_{L_p(\M; \ell_1^c)}$. This was already known to \cite{Junge-Perrin}.

We now consider  the remaining case $0<p<2/3$. At the time of this writing, we are unable to provide a simultaneous control of norms in the spirit of Theorem~\ref{infinite1}.  We also do not know if the  quasi norm  $\big\|\cdot \big\|_{L_p(\M;\ell_1^c)}$ used below can be improved to  $\big\|\cdot\big\|_{\ell_p(L_p(\M))}$.

\begin{theorem}\label{infinite2} Let  $0<p< 2/3$ and $\xi=(\xi_n)_{n\geq 1}$ be an adapted sequence that belongs to  $L_p(\M;\ell_2^c)$. Then there exist   two  adapted sequences $y=(y_n)_{n\geq 1}$ and $z=(z_n)_{n\geq 1}$  such that:
\begin{enumerate}[{\rm(i)}]
\item $\xi=y +z$;
\item $\big\|y\big\|_{L_p(\M;\ell_1^c)} + \big\| z\big\|_{L_p^{\rm cond}(\M;\ell_2^c)} \leq 2\big(\frac{2}{p}\big)^{1/2}\big\|\xi\big\|_{L_p(\M;\ell_2^c)}$.
\end{enumerate}
\end{theorem}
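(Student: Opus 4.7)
My plan is to adopt the Davis-style splitting from the proof of Theorem~\ref{infinite1}: with $\varsigma_n^2=\sum_{j\leq n}|\xi_j|^2$, $\alpha=1-p/2$ and $w_n=\varsigma_n^\alpha$ (invertible after a standard approximation), set $y_n=\xi_n w_n^{-1}(w_n-w_{n-1})$ and $z_n=\xi_n w_n^{-1}w_{n-1}$ with $w_0=0$. Both sequences are adapted and $\xi_n=y_n+z_n$. The bound on $z$ in $L_p^{\rm cond}(\M;\ell_2^c)$ is obtained verbatim from the corresponding computation in the proof of Theorem~\ref{infinite1}; that argument uses only H\"older's inequality and \eqref{L2}, and is insensitive to the constraint $p\geq 2/3$. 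It yields $\|z\|_{L_p^{\rm cond}(\M;\ell_2^c)}\leq (2/p)^{1/2}\|\xi\|_{L_p(\M;\ell_2^c)}$.

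The substantive new step is $\|y\|_{L_p(\M;\ell_1^c)}\leq (2/p)^{1/2}\|\xi\|_{L_p(\M;\ell_2^c)}$. I would exploit the factorization characterization of $L_p(\M;\ell_1^c)$ as the H\"older product $L_2(\M;\ell_2^c)\cdot L_q(\M;\ell_2^c)$ with $1/p=1/2+1/q$. Writing $y_n=\beta_n\alpha_n$ with $\beta_n=\xi_n w_n^{-1}$ and $\alpha_n=w_n-w_{n-1}$, inequality \eqref{L2} at once delivers $(\sum_n\|\beta_n\|_2^2)^{1/2}\leq(2/p)^{1/2}\|\xi\|_{L_p(\M;\ell_2^c)}^{p/2}$. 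Since $p/2+\alpha=1$ and $\|\varsigma^\alpha\|_q=\|\xi\|_{L_p(\M;\ell_2^c)}^\alpha$, the desired estimate reduces to showing
\[
\Bigl\|\Bigl(\sum_n(w_n-w_{n-1})^2\Bigr)^{1/2}\Bigr\|_q\leq \Bigl\|\sum_n(w_n-w_{n-1})\Bigr\|_q.
\]

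This last inequality is the main obstacle. Although immediate for positive scalars, in the noncommutative regime $q=2p/(2-p)<1$ both naive routes fail: the operator inequality $\sum a_n^2\leq(\sum a_n)^2$ breaks down for positive non-commuting $a_n$, and the Xu--Nikishin inequality invoked in Theorem~\ref{infinite1} requires exponent at least $1$. To handle this, my plan is to exploit the specific structure of the increments $\varsigma_n^\alpha-\varsigma_{n-1}^\alpha$ via the Loewner integral representation of $t\mapsto t^\alpha$ for $0<\alpha<1$, which writes each $w_n-w_{n-1}$ as a positive spectral integral of resolvent differences; combined with the quasi-triangle inequality for $L_p(\M;\ell_1^c)$, this should reduce the desired estimate to commutative-type inequalities applied fibrewise in the spectral parameter. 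An alternative route is to replace the single-index factorization by a multi-index one $y_n=\sum_k b_{k,n}^*a_{k,n}$ tailored to the fractional splitting $(w_n-w_{n-1})^{1/2}\cdot(w_n-w_{n-1})^{1/2}$ and to the operator monotonicity of $t\mapsto t^\alpha$, and then to control the resulting $b$-sum via a Bekjan--Chen--Perrin--Yang-type inequality with exponent $\gamma=-\alpha\in(-1,0)$. In either approach, the delicate step is balancing the arising $L_s(\M)$-norms of $\varsigma$ so that the final constant collapses to exactly $(2/p)^{1/2}$.
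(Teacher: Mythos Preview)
Your estimate for $z$ is fine; the gap is in the diagonal part. Once you keep the same splitting as in Theorem~\ref{infinite1} and factor $y_n=(\xi_n w_n^{-1})(w_n-w_{n-1})$, you are forced to prove
\[
\Big\|\Big(\sum_{n}(w_n-w_{n-1})^2\Big)^{1/2}\Big\|_q\leq\Big\|\sum_n(w_n-w_{n-1})\Big\|_q,\qquad q=\frac{2p}{2-p}<1,
\]
for noncommuting positives. Neither of your suggested routes (Loewner integral representation, multi-index factorization) is carried through, and it is far from clear that either can produce this inequality with the exact constant you need; indeed, this is precisely the point at which the Xu--Nikishin argument in Theorem~\ref{infinite1} breaks down. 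So as written the proof has a real gap.

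The paper avoids this obstacle altogether by changing the decomposition, not by proving a $q<1$ square-function inequality. It replaces $w_n$ by $w_n^2$ in the splitting:
\[
y_n=\xi_n w_n^{-2}(w_n^2-w_{n-1}^2),\qquad z_n=\xi_n w_n^{-2}w_{n-1}^2,
\]
and then factors $y_n=\beta_n\alpha_n$ with $\beta_n=\xi_n w_n^{-2}(w_n^2-w_{n-1}^2)^{1/2}$ and $\alpha_n=(w_n^2-w_{n-1}^2)^{1/2}$. The point is that $\sum_n|\alpha_n|^2=\sum_n(w_n^2-w_{n-1}^2)$ \emph{telescopes exactly} to $\varsigma^{2\alpha}$, so $\big\|(\sum_n|\alpha_n|^2)^{1/2}\big\|_r=\|\varsigma^\alpha\|_r=\|\xi\|_{L_p(\M;\ell_2^c)}^{\alpha}$ with no inequality needed. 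For the $\beta$-part one writes $\beta_n=(\xi_n w_n^{-1})\big(w_n^{-1}(w_n^2-w_{n-1}^2)^{1/2}\big)$ and observes that $w_n^{-1}(w_n^2-w_{n-1}^2)^{1/2}$ is a contraction, so $\sum_n\|\beta_n\|_2^2\leq\sum_n\|\xi_n w_n^{-1}\|_2^2\leq(2/p)\|\xi\|_{L_p(\M;\ell_2^c)}^p$ by \eqref{L2}. The $z$-estimate goes through as before, using that $w_n^{-1}w_{n-1}$ is a contraction. In short, the ``squared'' splitting makes the $L_p(\M;\ell_1^c)$ bound a direct telescoping computation, removing the need for the $q<1$ inequality you were trying to establish.
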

\begin{proof}
We follow the notation  from the proof of Theorem~\ref{infinite1}.
This time  the decomposition is given as follows:
\begin{equation}\label{decomp2}
\left\{\begin{split}
y_n &= \xi_n w_n^{-2}(w_n^2-w_{n-1}^2),\\
z_n &=\xi_n w_n^{-2}w_{n-1}^2.
\end{split}\right.
\end{equation}
Note that   compared to the previous one, we use $w_n^2$ in place of $w_n$. This adjustment is mainly needed  for the diagonal part. As before, we clearly have that  $y=(y_n)$ and $z=(z_n)$ are adapted  sequences with  $\xi_n =y_n + z_n$ for $n\geq 1$.
We claim that  $y$ and $z$ satisfy  $\rm(ii)$. The argument is an adaptation of that used in the proof of  item $\rm(ii)$  from Theorem~\ref{infinite1}, so we only  present a sketch.

We verify first that $y \in L_p(\M;\ell_1^c)$.  Define $r$ by  $1/p=1/2+1/r$. For $n\geq 1$, write  $y_n=\beta_n \alpha_n$ where $\beta_n=\xi_n w_n^{-2}(w_n^2-w_{n-1}^2)^{1/2}$ and $\alpha_n=(w_n^2 -w_{n-1}^2)^{1/2}$. It follows that
\begin{align*}
\big\| y\big\|_{L_p(\M;\ell_1^c)} &=   \Big\|(\beta_n\alpha_n)_{n\geq 1} \big)\Big\|_{L_p(\M;\ell_1^c)} \\
&\leq \Big( \sum_{n\geq 1} \big\|\xi_nw_n^{-2}(w_n^2-w_{n-1}^2)^{1/2}\big\|_2^2\Big)^{1/2} \, \Big\| \big(\sum_{n\geq 1} |(w_n^2-w_{n-1}^2)^{1/2}|^2 \big)^{1/2}\Big\|_r \\
&= \Big( \sum_{n\geq 1} \big\|\xi_nw_n^{-1}[w_n^{-1}(w_n^2-w_{n-1}^2)^{1/2}]\big\|_2^2\Big)^{1/2}  \,\Big\| \big(\sum_{n\geq 1} (w_n^2-w_{n-1}^2) \big)^{1/2}\Big\|_r.
\end{align*}
Observing that  $\{w_n^{-1}(w_n^2 -w_{n-1}^2)^{1/2}\}_{n\geq 1}$ is a sequence of contractions, we have 
\[
\big\| y\big\|_{L_p(\M;\ell_1^c)}\leq \Big( \sum_{n\geq 1} \big\|\xi_nw_n^{-1}\big\|_2^2\Big)^{1/2} \Big\| \big(\sum_{n\geq 1} (w_n^2-w_{n-1}^2) \big)^{1/2}\Big\|_r.
\]
Moreover, as $r=\frac{2p}{2-p}=\frac{p}{\alpha}$, we have
\[
 \Big\| \big(\sum_{n\geq 1} w_n^2-w_{n-1}^2 \big)^{1/2}\Big\|_r= \|\varsigma^{\alpha}\|_r=\|\varsigma\|_p^\alpha= \|\xi\|_{L_p(\M;\ell_2^c)}^\alpha.
\]
Using \eqref{L2}, the  above estimate leads to:
\begin{equation*}
\big\| y\big\|_{L_p(\M;\ell_1^c)} \leq \big(\frac{2}{p}\big)^{1/2} \big\|\xi \big\|_{L_p(\M;\ell_2^c)}.
\end{equation*}
On the other hand,    we  may also  write:
\begin{align*}
\big\| z \big\|_{L_p^{\rm cond}(\M;\ell_2^c)} &\leq \big\| \big( \sum_{n\geq 1}\E_{n-1}|\xi_nw_n^{-2}w_{n-1}^2|^2 \big)^{1/2} \big\|_p\\
&=\big\| \big( \sum_{n\geq 1}w_{n-1}\E_{n-1}|\xi_nw_n^{-2} w_{n-1}|^2 w_{n-1} \big)^{1/2} \big\|_p.
\end{align*}
As before, we  may deduce that 
\begin{equation*}
\big\| z \big\|_{L_p^{\rm cond}(\M;\ell_2^c)}^2 
\leq  \big\| \xi\big\|_{L_p(\M;\ell_2^c)}^{2\alpha} \sum_{n\geq 1} \big\|\xi_nw_n^{-2} w_{n-1}\big\|_2^2.
\end{equation*}
Using \eqref{L2} and the fact that  $(w_n^{-1}w_{n-1})_{n\geq 1}$ is a sequence of contractions,  we conclude that 
\[
\big\| z \big\|_{L_p^{\rm cond}(\M;\ell_2^c)}^2 \leq  \big\| \xi\big\|_{L_p(\M;\ell_2^c)}^{2\alpha}  \sum_{n\geq 1} \big\|\xi_nw_n^{-1}\big\|_2^2 
\leq  \frac{2}{p} \,\big\| \xi\big\|_{L_p(\M;\ell_2^c)}^2
\]
 The proof is complete.
\end{proof}

\bigskip

The following noncommutative Davis decomposition easily follows from
Theorem~\ref{infinite1} when $1\leq p<2$. A notable new feature of this decomposition is a simultaneous control on the $\H_p^c$ and  $\H_q^c$ norms for $2\leq q<\infty$.  Decompositions of such a nature are important for some aspects of noncommutative martingale theory, for instance, for the analytic theory of quantum stochastic integrals and  the study of noncommutative martingales in symmetric spaces.

\begin{corollary}\label{davis:1} Let  $1\leq p<2\leq q<\infty$.
Then every  martingale $x \in \H_p^c(\M) \cap \H_q^c(\M)$ admits a  decomposition into  two martingales $x^c$ and $x^d$ such that:
\begin{enumerate}[{\rm (i)}]
 \item   $x=x^c +x^d$;
  \item $\big\|x^c\big\|_{\h_p^c} +  \big\|x^d\big\|_{\h_p^d}
  \leq 2^{5/2}\big\|x\big\|_{\H^c_p}$;
  \item $\big\|x^c\big\|_{\h_q^c} +  \big\|x^d\big\|_{\h_q^d}
  \leq Cq \big\|x\big\|_{\H^c_q}$ with an absolute constant $C$.
\end{enumerate}
\end{corollary}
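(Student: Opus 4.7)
My plan is to apply Theorem~\ref{infinite1} with the given $p$ to the martingale difference sequence $\xi := (dx_n)_{n \geq 1}$, noting that $\|\xi\|_{L_p(\M;\ell_2^c)} = \|x\|_{\H_p^c}$ and $\|\xi\|_{L_q(\M;\ell_2^c)} = \|x\|_{\H_q^c}$. A standard truncation in both amplitude and length reduces to the case $\xi \in L_\infty(\M;\ell_2^c)$; the final statement is then recovered by a uniform-bound limit argument using the resulting simultaneous control. Theorem~\ref{infinite1} produces adapted sequences $y = (y_n)$ and $z = (z_n)$ with $\xi = y + z$ and the bounds of items (ii) and (iii), the latter applied with our $q$.

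Since $y$ and $z$ are adapted but not martingale differences, I would define
\[
dx^d_n := y_n - \E_{n-1}(y_n), \qquad dx^c_n := z_n - \E_{n-1}(z_n),
\]
which have vanishing conditional expectations. Because $\E_{n-1}(dx_n) = 0$, they sum to $dx_n$, so the associated martingales $x^d, x^c$ satisfy $x = x^d + x^c$, giving (i).

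For (ii), the triangle inequality gives $\|dx^d_n\|_p \leq 2\|y_n\|_p$ and hence $\|x^d\|_{\h_p^d} \leq 2\|y\|_{\ell_p(L_p(\M))}$. The pointwise operator identity
\[
\E_{n-1}|z_n - \E_{n-1}(z_n)|^2 = \E_{n-1}|z_n|^2 - |\E_{n-1}(z_n)|^2 \leq \E_{n-1}|z_n|^2
\]
together with monotonicity of the $L_{p/2}$ quasi-norm on positive operators (which is valid for all $0 < p < \infty$) yields $\|x^c\|_{\h_p^c} \leq \|z\|_{L_p^{\rm cond}(\M;\ell_2^c)}$. Adding these bounds and invoking item (ii) of Theorem~\ref{infinite1} produces $\|x^c\|_{\h_p^c} + \|x^d\|_{\h_p^d} \leq 4\sqrt{2/p}\,\|x\|_{\H_p^c} \leq 2^{5/2}\|x\|_{\H_p^c}$ since $p \geq 1$.

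For (iii), the diagonal bound combines $\|dx^d_n\|_q \leq 2\|y_n\|_q$ with the norm-one inclusion $L_q(\M;\ell_2^c) \hookrightarrow \ell_q(L_q(\M))$ for $q \geq 2$ (obtained by interpolation between the equality at $q = 2$ and the trivial bound $\sup_n \|y_n\|_\infty \leq \|(\sum_n |y_n|^2)^{1/2}\|_\infty$ at $q = \infty$) to give $\|x^d\|_{\h_q^d} \leq 2\|y\|_{L_q(\M;\ell_2^c)} \leq 6\|x\|_{\H_q^c}$ via item (iii) of Theorem~\ref{infinite1}. For the conditioned part, the same operator inequality produces $\|x^c\|_{\h_q^c}^2 \leq \|\sum_n \E_{n-1}|z_n|^2\|_{q/2}$, and applying the noncommutative dual Doob inequality of Junge \cite{Ju} to the positive adapted sequence $(|z_n|^2)_{n \geq 1}$ at exponent $q/2 \geq 1$ controls the right side by $Cq\,\|z\|_{L_q(\M;\ell_2^c)}^2$, yielding $\|x^c\|_{\h_q^c} + \|x^d\|_{\h_q^d} \leq C'q\,\|x\|_{\H_q^c}$ after combining with item (iii) of Theorem~\ref{infinite1}. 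The main obstacle will be arranging the initial approximation so that it converges simultaneously in both $\H_p^c$ and $\H_q^c$ while forcing $\xi$ into $L_\infty(\M;\ell_2^c)$; a secondary issue is accurately tracking the polynomial dependence on $q$ in the dual Doob step to match the claimed bound $Cq$.
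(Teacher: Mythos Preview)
Your proposal is correct and follows essentially the same approach as the paper: apply Theorem~\ref{infinite1} to $(dx_n)$ and then recenter $y$ and $z$ into martingale differences via $dx^d_n = y_n - \E_{n-1}(y_n)$ and $dx^c_n = z_n - \E_{n-1}(z_n)$, using exactly the operator inequality $\E_{n-1}|z_n - \E_{n-1}(z_n)|^2 \le \E_{n-1}|z_n|^2$ and the contractive inclusion $L_q(\M;\ell_2^c)\hookrightarrow \ell_q(L_q(\M))$ that you describe. The only minor difference is that for the bound on $\|x^c\|_{\h_q^c}$ the paper invokes the noncommutative Stein inequality (with constant $\gamma_q$ of order $q$ from \cite{JX2}) rather than the dual Doob inequality---these give the same estimate here---and the paper does not spell out the approximation to $L_\infty(\M;\ell_2^c)$ that you correctly flag.
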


\begin{proof} Let $\xi=(dx_n)_{n\geq 1}$.
 It is enough to take
martingales $x^c$ and $x^d$ with $dx_n^c= z_n - \E_{n-1}(z_n)$ and $dx_n^d= y_n - \E_{n-1}(y_n)$, where $y$ and $z$ are the adapted sequences from Theorem~\ref{infinite1}. Clearly, $x=x^d +x^c$. From the facts that $\E_{n-1}$'s are contractive projections  on $L_p(\M)$, we get that $\|x^d\|_{\h_p^d} \leq 2\|y\|_{\ell_p(L_p(\M))}$. Also, since for every $n\geq 1$, $\E_{n-1}|dx_n^c|^2 \leq \E_{n-1}|z_n|^2$, we immediately get  $\|x^c\|_{\h_p^c} \leq \|z\|_{L_p^{\rm cond}(\M;\ell_2^c)}$. For the $\h_q^c$-norm of $x^c$, we have from the noncommutative Stein inequality  and Theorem~\ref{infinite1}(iii) that
 \[
\big\|x^c\big\|_{\h_q^c} \leq  \big\| z \big\|_{L_q^{\rm cond}(\M;\ell_2^c)}
\leq \gamma_q \big\| z \big\|_{L_q(\M;\ell_2^c)}
\leq 3\gamma_q \big\| x\big\|_{\H_q^c}.
\]
The order of $\gamma_q$ is from \cite{JX2}. The estimate on $\|x^d\|_{h_q^d}$ follows from the fact that for $q\ge2$, the identity map from $L_q(\M; \ell_2^c)$ into $\ell_q(L_q(\M))$ is a contraction.
\end{proof}

\begin{remark}\label{davis:simul}
Corollary~\ref{davis:1} goes beyond the version of the  noncommutative Davis' decomposition of  Perrin (\cite[Theorem~2.1]{Perrin}) since for any given $2\leq q_0<\infty$,  it provides through the use of interpolation a  decomposition  that works simultaneously for all $1\le p \leq q_0$ with universal  constants.  This fact is essential for the applications in the next section.
 \end{remark}

 An application of the proof of Theorem~\ref{infinite2} to the case $p=1$ provides  uniform previsible estimates. The next result  should be compared with \cite[Theorem~III.3.5]{Ga}
 and \cite[Theorem~4.4]{Muller2}.
 \begin{corollary}\label{davis:2}
 Every martingale $x \in \H_1^c(\M) $ admits a  decomposition into  two martingales $x^c$ and $x^d$ such that:
\begin{enumerate}[{\rm (i)}]
 \item   $x=x^c +x^d$;
  \item $\big\|x^c\big\|_{\h_1^c} +  \big\|x^d\big\|_{\h_1^d}
  \leq 2^{5/2}\big\|x\big\|_{\H^c_1}$;
  \item the martingale $x^c$ satisfies the previsible uniform estimates:
  \[
  |dx_n^c|^2 \leq 2 S_{c,n-1}^2(x),\quad n\geq 1.
  \]
\end{enumerate}
\end{corollary}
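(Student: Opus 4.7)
The plan is to apply the constructive decomposition from the proof of Theorem~\ref{infinite2} to the martingale difference sequence $\xi = (dx_n)$ at the endpoint $p=1$. Although Theorem~\ref{infinite2} is formulated for $0<p<2/3$, the algebraic decomposition and the key pointwise bound $|z_n|^2 \leq \varsigma_{n-1}^2$ go through at $p=1$ as well, where $\varsigma_n^2 := \sum_{k=1}^n |dx_k|^2 = S_{c,n}^2(x)$. With $\alpha = 1/2$ so that $w_n = \varsigma_n^{1/2}$ and $w_n^2 = \varsigma_n$ (after the usual approximation making the $\varsigma_n$ invertible), I would take the adapted sequences
\[
y_n = \xi_n \varsigma_n^{-1}(\varsigma_n - \varsigma_{n-1}), \qquad z_n = \xi_n \varsigma_n^{-1} \varsigma_{n-1}\,,
\]
so that $\xi_n = y_n + z_n$.

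Following the template of Corollary~\ref{davis:1}, I would then define the martingale differences $dx_n^c = z_n - \E_{n-1}(z_n)$ and $dx_n^d = y_n - \E_{n-1}(y_n)$. Since $\E_{n-1}(\xi_n)=0$, these are honest martingale differences summing to $\xi_n$, so $x = x^c + x^d$, proving (i). For (ii), the same estimates as in the proof of Theorem~\ref{infinite2} at $p=1$ yield
\[
\|y\|_{L_1(\M;\ell_1^c)} \leq \sqrt{2}\,\|x\|_{\H_1^c}, \qquad \|z\|_{L_1^{\rm cond}(\M;\ell_2^c)} \leq \sqrt{2}\,\|x\|_{\H_1^c}.
\]
Using $L_1(\M;\ell_1^c)=\ell_1(L_1(\M))$, the contractivity of $\E_{n-1}$ on $L_1(\M)$, and the pointwise identity $\E_{n-1}|dx_n^c|^2 = \E_{n-1}|z_n|^2 - |\E_{n-1}(z_n)|^2 \leq \E_{n-1}|z_n|^2$, I obtain $\|x^d\|_{\h_1^d} \leq 2\|y\|_{\ell_1(L_1)}$ and $\|x^c\|_{\h_1^c} \leq \|z\|_{L_1^{\rm cond}(\M;\ell_2^c)}$, which sum to at most $2^{5/2}\|x\|_{\H_1^c}$.

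The previsible estimate (iii) is where the specific choice $w_n^2 = \varsigma_n$ is essential. From $|\xi_n|^2 = \varsigma_n^2 - \varsigma_{n-1}^2 \leq \varsigma_n^2$ I compute directly
\[
|z_n|^2 = \varsigma_{n-1}\varsigma_n^{-1}|\xi_n|^2 \varsigma_n^{-1}\varsigma_{n-1} \leq \varsigma_{n-1}\varsigma_n^{-1}\varsigma_n^2\varsigma_n^{-1}\varsigma_{n-1} = \varsigma_{n-1}^2 = S_{c,n-1}^2(x),
\]
which is $\M_{n-1}$-measurable. Combined with the operator inequality $|a-b|^2 \leq 2(|a|^2+|b|^2)$ and the Kadison--Schwarz bound $|\E_{n-1}(z_n)|^2 \leq \E_{n-1}(|z_n|^2) \leq \varsigma_{n-1}^2$, this gives a pointwise bound $|dx_n^c|^2 \leq C\,S_{c,n-1}^2(x)$ with an explicit absolute constant. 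The main obstacle is sharpening the constant to the claimed value~$2$: the crude route just sketched gives~$4$, so a more delicate argument is required. I expect this to come from the factorization $z_n = b_n \varsigma_{n-1}$ with $b_n := \xi_n \varsigma_n^{-1} \in \M$ a contraction, which lets one pull $\varsigma_{n-1}$ outside, so that $|dx_n^c|^2 = \varsigma_{n-1}|b_n - \E_{n-1}(b_n)|^2 \varsigma_{n-1}$; a careful analysis of $(I-\E_{n-1})(b_n)$ within the $\M_{n-1}$-module structure, rather than a blunt triangle inequality, should then recover the constant~$2$ in the statement.
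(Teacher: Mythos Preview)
Your approach is identical to the paper's: it too applies the decomposition \eqref{decomp2} from Theorem~\ref{infinite2} at $p=1$ (where $\alpha=1/2$, so $w_n^2=\varsigma_n=S_{c,n}(x)$, giving exactly your $y_n$ and $z_n$), centers to form $x^c$ and $x^d$, and handles (ii) by the same estimates you outline.

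Regarding the constant in (iii): the paper's own proof uses exactly the crude route you describe. It writes $|dx_n^c|^2 \leq 2\big(|z_n|^2 + |\E_{n-1}(z_n)|^2\big)$, establishes $|z_n|^2 \leq S_{c,n-1}^2$, and then simply asserts ``we clearly get the desired estimate.'' As you correctly note, combining this with Kadison--Schwarz on the second term gives $4\,S_{c,n-1}^2$, not $2\,S_{c,n-1}^2$; the paper provides no sharper argument. So your proof is complete to exactly the same extent as the paper's, and there is no ``more delicate argument'' you are missing---the discrepancy is in the paper itself. (Your proposed factorization $z_n=b_n\varsigma_{n-1}$ does not help: for two contractions one has only $\|b_n-\E_{n-1}(b_n)\|_\infty\leq 2$ in general, which again yields~$4$.)
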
 
 \begin{proof}
 It suffices to apply the decomposition \eqref{decomp2} to $p=1$. That is, we set for $n\geq 1$,
 \begin{equation*}
 \left\{\begin{split}
dx_n^d &= dx_nS_{c,n}^{-1}(S_{c,n}-S_{c,n-1}) - \E_{n-1}[dx_nS_{c,n}^{-1}(S_{c,n}-S_{c,n-1})]\\
dx_n^c  &=dx_nS_{c,n}^{-1}S_{c,n-1} - \E_{n-1}[dx_nS_{c,n}^{-1}S_{c,n-1}].
\end{split}\right.
\end{equation*}
The verification of (ii) follows exactly the proof of Theorem~\ref{infinite2}. For the previsible estimates, we have for $n\geq 1$,
 $$|dx_n^c|^2 \leq 2(|dx_nS_{c,n}^{-1}S_{c,n-1}|^2 +|\E_{n-1}[dx_nS_{c,n}^{-1}S_{c,n-1}]|^2).$$ 
Since 
 $$|dx_nS_{c,n}^{-1}S_{c,n-1}|^2=S_{c,n-1} S_n^{-1}(S_{c,n}^2-S_{c,n-1}^2)S_{c,n}^{-1}S_{c,n-1} \leq S_{c,n-1}^2\,,$$
we clearly get the desired estimate.
 \end{proof}

\begin{remark}
Corollaries~\ref{davis:1} and \ref{davis:2}  are not valid for the case $0<p<1$ even for classical martingales. Indeed, if Corollary~\ref{davis:1} was valid for $0<p<1$, we would then have $\h_p^{1_c}(\M) +\h_p^c(\M) = \H_p^c(\M)$. On the other  hand, one can easily see that  $\h_p^{1_c}(\M) \subseteq L_p(\M)$ and  for classical martingales, we also have $\h_p^c(\M) \subseteq L_p(\M)$. Thus, there would exist a constant $C_p$ such that  $\|x\|_p \leq C_p \|x\|_{\H_p}$ for all (classical) martingales $x \in \H_p$. However, 
such constant $C_p$ does not exist (see \cite[Example~8.1]{Burk-Gundy}). 
\end{remark}

Our next result shows that  our decomposition  in Theorem~\ref{infinite1} for $p=1$ is stronger than the noncommutative L\'epingle-Yor inequality.

\begin{corollary}\label{Lepingle}
Let  $(\xi_n)_{n\geq 1} $ be an adapted sequence in $L_1(\M)$. Then we have
\[
\Big\| \big( \sum_{n\geq 1} | \E_{n-1}(\xi_n) |^2\big)^{1/2} \Big\|_1 \leq  2\sqrt{2} \,
\Big\| \big( \sum_{n\geq 1} | \xi_n |^2\big)^{1/2} \Big\|_1.
\]
\end{corollary}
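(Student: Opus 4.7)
The plan is to apply the Davis-type decomposition of Theorem~\ref{infinite1} (with $p=1$) to the adapted sequence $\xi=(\xi_n)$ and then control the $L_1(\M;\ell_2^c)$-norm of the conditional expectations of each piece separately, using only the most basic features of the diagonal and conditioned spaces. After a routine truncation reducing to sequences $\xi\in L_1(\M;\ell_2^c)\cap L_\infty(\M;\ell_2^c)$, Theorem~\ref{infinite1} produces adapted $y=(y_n)$ and $z=(z_n)$ with $\xi_n=y_n+z_n$ and
\[
\big\|y\big\|_{\ell_1(L_1(\M))}+\big\|z\big\|_{L_1^{\rm cond}(\M;\ell_2^c)}\leq 2\sqrt{2}\,\big\|\xi\big\|_{L_1(\M;\ell_2^c)}.
\]
By linearity, $\E_{n-1}(\xi_n)=\E_{n-1}(y_n)+\E_{n-1}(z_n)$, so the triangle inequality in the Banach space $L_1(\M;\ell_2^c)$ reduces the problem to bounding the $L_1(\M;\ell_2^c)$-norm of the two resulting summands.

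For the diagonal contribution, I would apply the triangle inequality in $L_1(\M;\ell_2^c)$ a second time, viewing each $\E_{n-1}(y_n)$ as a singleton sequence, together with the $L_1(\M)$-contractivity of $\E_{n-1}$, to get
\[
\Big\|\big(\sum_{n\geq 1}|\E_{n-1}(y_n)|^2\big)^{1/2}\Big\|_1\leq\sum_{n\geq 1}\big\|\E_{n-1}(y_n)\big\|_1\leq\big\|y\big\|_{\ell_1(L_1(\M))}.
\]
For the conditioned contribution, the Kadison--Schwarz inequality for $\E_{n-1}$ gives $\E_{n-1}(z_n)^*\E_{n-1}(z_n)\leq\E_{n-1}(z_n^*z_n)$ for each $n$; summing in $n$ and invoking the operator monotonicity of the square root yields
\[
\Big\|\big(\sum_{n\geq 1}|\E_{n-1}(z_n)|^2\big)^{1/2}\Big\|_1\leq\big\|z\big\|_{L_1^{\rm cond}(\M;\ell_2^c)}.
\]
Combining the three displayed inequalities produces exactly the constant $2\sqrt{2}$ appearing in the corollary.

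The main technical point, rather than a substantive obstacle, is the reduction to $\xi\in L_\infty(\M;\ell_2^c)$ needed to invoke Theorem~\ref{infinite1}. Because the spectral projections of $|\xi_n|$ lie in $\M_n$, adaptedness is preserved under spectral truncation of each $\xi_n$ combined with restriction to finitely many indices. Passing to the limit on both $L_1$-norms via Fatou's lemma then recovers the general case, with no loss in the constant.
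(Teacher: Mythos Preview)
Your proof is correct and follows essentially the same route as the paper's: apply the Davis decomposition of Theorem~\ref{infinite1} at $p=1$, control the $z$-piece via the Kadison--Schwarz inequality $|\E_{n-1}(z_n)|^2\leq\E_{n-1}|z_n|^2$, and control the $y$-piece via the contractive embedding $\ell_1(L_1(\M))\hookrightarrow L_1(\M;\ell_2^c)$ together with the $L_1$-contractivity of the $\E_{n-1}$. The only difference is cosmetic: you perform a spectral truncation to force $\xi\in L_\infty(\M;\ell_2^c)$ before invoking Theorem~\ref{infinite1}, whereas the paper simply remarks that the $L_\infty$ hypothesis in Theorem~\ref{infinite1} is used only in the proof of item~(iii), so items~(i)--(ii) already hold for any adapted $\xi\in L_1(\M;\ell_2^c)$ and no truncation is needed.
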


For classical martingales, the inequality above is known as the L\'epingle-Yor inequality (\cite{Lepingle}).
Its noncommutative analogue  as stated in Corollary~\ref{Lepingle}  was  proved in \cite{Qiu1} with constant $2$. Unfortunately,  our  alternative proof below yields  only the constant  $2\sqrt{2}$. We should note that the optimal constant for the classical situation is $\sqrt{2}$ (see \cite{Osekowski1}).

\begin{proof}[Proof of Corollary~\ref{Lepingle}]
Let $\xi=(\xi_n)$ be an adapted sequence in $L_1(\M;\ell_2^c)$. Apply Theorem~\ref{infinite1} to get a decomposition  into two adapted sequences $y$ and $z$ such that: 
\[
\big\| y\big\|_{\ell_1(L_1(\M))} + \big\|z \big\|_{L_1^{\rm cond}(\M;\ell_2^c)} \leq 2\sqrt{2}\, \big\| \xi\big\|_{L_1(\M;\ell_2^c)}.
\]
Note that since we are not using item~(iii) of Theorem~\ref{infinite1}, the assumption that  the adapted sequence $\xi$ belongs to $L_\infty(\M;\ell_2^c)$ is not needed. Since 
$|\E_{n-1}(b)|^2 \leq \E_{n-1}(|b|^2)$ for every $b \in L_2(\M)$, it follows that 
\begin{equation}\label{z}
\Big\| \big( \sum_{n\geq 1} | \E_{n-1}(z_n) |^2\big)^{1/2} \Big\|_1 \leq \big\|z\big\|_{L_1^{\rm cond}(\M;\ell_2^c)}.
\end{equation}
Moreover, as  $\ell_1(L_1(\M))$  embeds contractively into $L_1(\M;\ell_2^c)$ and the expectations $\E_n$'s are contraction in $L_1(\M)$, we also have
\begin{equation}\label{y}
\Big\| \big( \sum_{n\geq 1} | \E_{n-1}(y_n) |^2\big)^{1/2} \Big\|_1 \leq \sum_{n\geq 1} \big\| \E_{n-1}(y_n) \big\|_1 \leq \sum_{n\geq 1} \big\|y_n\big\|_1.
\end{equation}
Combining \eqref{z} and \eqref{y}, we deduce that
\[
\Big\| \big( \sum_{n\geq 1} | \E_{n-1}(\xi_n) |^2\big)^{1/2} \Big\|_1 \leq \big\| y\big\|_{\ell_1(L_1(\M))} + \big\|z \big\|_{L_1^{\rm cond}(\M)}. 
\]
This proves the desired inequality. 
\end{proof}

In the following, we  extend the Davis decomposition to the case of martingales in a certain class of noncommutative symmetric spaces. This is one of the main tools that we use in the next section.

\begin{theorem}\label{E-davis} Let $1<p<q<\infty$ and 
 $E$ be a symmetric Banach function space with the Fatou property and  $E\in {\rm Int}(L_p, L_q)$.  
 There exist two positive constants  $\a_E$ and $\b_E$ such that:
 \begin{enumerate}[{\rm(i)}]
\item  
 for every $x \in \H_E^c(\M)$,  the following inequality holds:
 \begin{equation*}
\a_E^{-1} \inf\Big\{ \big\|x^d \big\|_{\h_E^d} +\big\|x^c \big\|_{\h_E^c}\Big\} \leq 
\big\|x\big\|_{\H_E^c}, 
\end{equation*}
where the infimum   is taken over all $x^d \in \h_E^d(\M)$ and  $x^c \in \h_E^c(\M)$ such that $x=x^d +x^c$.
\item 
for every $x \in \h_E^d \cap \h_E^c(\M)$, the following inequality holds:
 \begin{equation*}
\big\|x\big\|_{\H_E^c} \leq \b_E \max\Big\{ \big\| x\big\|_{\h_E^d}, \big\|x \big\|_{\h_E^c}\Big\}.
\end{equation*}
\end{enumerate}
\end{theorem}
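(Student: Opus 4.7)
\textbf{Plan for Theorem~\ref{E-davis}.} The strategy is to transfer the $L_p$-level simultaneous Davis decomposition of Corollary~\ref{davis:1} to the symmetric-space setting via the $J$-functional interpolation method provided by Lemma~\ref{interpolation1}. Since an operator bounded on $L_{p'}$ and $L_{q'}$ is automatically bounded on $L_p$ and $L_q$ whenever $p'\le p<q\le q'$ (Riesz--Thorin), we have the monotonicity ${\rm Int}(L_p,L_q)\subseteq{\rm Int}(L_{p'},L_{q'})$, so one may assume from the outset that $1<p<2\le q<\infty$ and fix, via Lemma~\ref{interpolation1}, a symmetric function space $F$ with $E(\N)=(L_p(\N),L_q(\N))_{F,\underline{j}}$ for every semifinite von Neumann algebra $\N$.

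\emph{Part (i).} For $x\in\H_E^c(\M)$, the difference sequence $(dx_n)$ belongs to $E(\M;\ell_2^c)$, which by Lemma~\ref{interpolation1} coincides with $\bigl(L_p(\M;\ell_2^c),L_q(\M;\ell_2^c)\bigr)_{F,\underline{j}}$ up to equivalent norms. Choose a $J$-representation $(dx_n)=\sum_\nu u^\nu$ with $u^\nu\in L_p(\M;\ell_2^c)\cap L_q(\M;\ell_2^c)$ and $\bigl\|\underline{j}(\{u^\nu\},\cdot)\bigr\|_F$ comparable to $\|x\|_{\H_E^c}$. Because $\E_{n-1}(dx_n)=0$, replacing each $u^\nu_n$ by $u^\nu_n-\E_{n-1}(u^\nu_n)$ leaves the total sum unchanged and, by the noncommutative Stein inequality on $L_r(\M;\ell_2^c)$ for $1<r<\infty$, alters the $J$-functional only by a factor depending on $p,q$. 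We may therefore assume each $u^\nu=(dM^\nu_n)_{n\ge 1}$ for some martingale $M^\nu\in\H_p^c(\M)\cap\H_q^c(\M)$ with $\sum_\nu M^\nu=x$. Apply Corollary~\ref{davis:1} to each $M^\nu$ to obtain $M^\nu=M^{\nu,d}+M^{\nu,c}$ with simultaneous uniform control of the $\h_p$- and $\h_q$-norms. Setting $x^d:=\sum_\nu M^{\nu,d}$ and $x^c:=\sum_\nu M^{\nu,c}$, and exploiting the isometric embeddings of $\h_E^d$ into $E(\M\overline{\otimes}\ell_\infty)$ and of $\h_E^c$ into $E(\M\overline{\otimes}B(\ell_2(\mathbb N^2)))$---both of which are themselves $J$-functional interpolations of the corresponding $L_p$ and $L_q$ spaces---the piecewise bounds from Corollary~\ref{davis:1} sum to the desired estimate $\|x^d\|_{\h_E^d}+\|x^c\|_{\h_E^c}\le\alpha_E\|x\|_{\H_E^c}$.

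\emph{Part (ii).} At each endpoint $r\in\{p,q\}$ the inclusion $\h_r^c\cap\h_r^d\hookrightarrow\H_r^c$ is bounded: for $r\ge 2$ this is the column part of the noncommutative Burkholder/Rosenthal inequality of \cite{JX}, and for $1<r<2$ the stronger separate inclusions $\h_r^c\hookrightarrow\H_r^c$ and $\h_r^d\hookrightarrow\H_r^c$ hold (as a consequence of Corollary~\ref{davis:1} or, equivalently, the Perrin Davis decomposition). Passing to the ambient noncommutative $L_r$-spaces into which $\h_E^c$, $\h_E^d$, and $\H_E^c$ embed isometrically, Lemma~\ref{Operator-interpolation} yields the inclusion $\h_E^c\cap\h_E^d\hookrightarrow\H_E^c$ with the required constant $\beta_E$; alternatively, one may write $x\in\h_E^c\cap\h_E^d$ as $\sum_\nu y^\nu$ via the $J$-representation of the intersection space, apply the $L_r$ inclusion to each summand, and sum.

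\emph{Main obstacle.} The heart of the difficulty lies in Part (i): the decomposition produced by Corollary~\ref{davis:1} is nonlinear in $x$, so standard operator interpolation cannot be applied directly. The simultaneous control of $\h_p$- and $\h_q$-norms established earlier is precisely the feature that permits the $J$-method to be run one summand at a time, which is why Corollary~\ref{davis:1} (rather than the original Perrin decomposition) is the crucial input. A secondary technical point is that the Stein-inequality step used to restore adaptedness after the $J$-decomposition of $(dx_n)$ forces the strict inequality $p>1$ at the lower endpoint.
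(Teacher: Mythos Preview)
Your treatment of Part~(i) is essentially the paper's argument: take a $J$-representation of $x$ in the couple $(\H_p^c,\H_q^c)$ (you reach this via the column space $(L_p(\M;\ell_2^c),L_q(\M;\ell_2^c))$ plus Stein, whereas the paper invokes the complementation of $\H_r^c$ in $L_r(\M\overline\otimes B(\ell_2))$ directly, but these are the same Stein projection), apply Corollary~\ref{davis:1} piecewise, and re-sum using the isometric embeddings $\mathcal D_d$ and $U\mathcal D_c$. This is correct.

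Part~(ii), however, has a genuine gap. You propose to interpolate the endpoint inclusions $\h_r^d\cap\h_r^c\hookrightarrow\H_r^c$ for $r\in\{p,q\}$, either via Lemma~\ref{Operator-interpolation} or via a $J$-representation of the intersection. Neither works as stated. Lemma~\ref{Operator-interpolation} requires a single linear operator between noncommutative symmetric spaces $E(\M_1)\to E(\M_2)$; the intersection $\h_r^d\cap\h_r^c$ embeds diagonally into $L_r(\N_1)\oplus L_r(\N_2)$, but there is no bounded projection onto this diagonal (the two projections $P_d,P_c$ produce different martingales in general), so you cannot manufacture the required operator. Your $J$-method alternative presupposes that $\h_E^d\cap\h_E^c$ coincides with $\bigl(\h_p^d\cap\h_p^c,\,\h_q^d\cap\h_q^c\bigr)_{F,\underline j}$; the containment $\subseteq$ is easy, but the reverse direction asks precisely for a \emph{simultaneous} $J$-representation controlling both the $\h_r^d$- and $\h_r^c$-norms at once, and general interpolation theory does not provide this for intersections of unrelated couples. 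In $K$-functional language, you would need $K(t,x;\h_p^d\cap\h_p^c,\h_q^d\cap\h_q^c)\lesssim\max\{K(t,x;\h_p^d,\h_q^d),K(t,x;\h_p^c,\h_q^c)\}$, which is exactly the non-automatic direction.

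The paper sidesteps this entirely by \emph{duality}: Part~(i) applied to the K\"othe dual $E^\times\in{\rm Int}(L_{q'},L_{p'})$ yields $\H_{E^\times}^c\hookrightarrow\h_{E^\times}^d+\h_{E^\times}^c$, and dualizing this sum-decomposition gives the max-inequality (ii) for $E$, following the pattern of \cite[Theorem~3.1, Part~III]{RW}. Note that this mirrors your own diagnosis of Part~(i): the obstacle there was the nonlinearity of the Davis splitting, resolved by the simultaneous control in Corollary~\ref{davis:1}; in Part~(ii) the analogous obstacle is the nonlinearity of intersections under interpolation, and the resolution is to dualize rather than to interpolate directly.
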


\begin{proof}
Throughout the proof we make use of the following notations for  the compatible  couples
\[
\overline{A} :=(L_p(\M\overline{\otimes} \ell_\infty), L_q(\M\overline{\otimes} \ell_\infty))\  \text{ and }\    \overline{B} :=(L_p(\M \overline{\otimes} B(\ell_2(\mathbb{N}^2))) , L_q(\M \overline{\otimes} B(\ell_2(\mathbb{N}^2)))).
\] 
Also we may fix a symmetric function space $F$  with nontrivial Boyd indices such that for any semifinite von Neumann algebra $\N$, we have $E(\N)=[ L_p(\N), L_q(\N)]_{F, \underline{j}}$ as stated in Lemma~\ref{interpolation1}.  We note that when  $1<p<q<\infty$, $\H_p^c(\M)$ and $\H_q^c(\M)$  embed complementedly  into $L_p(\M \overline{\otimes} B(\ell_2))$ and $L_q(\M \overline{\otimes} B(\ell_2))$, respectively. 
This implies that for every $x \in \H_p^c(\M) +\H_q^c(\M)$ and $t>0$,
\[
K\big(t,x; \H_p^c(\M), \H_q^c(\M)\big) \leq C_{p,q} K\big(t,\cal{D}_c(x); L_p(\M \overline{\otimes} B(\ell_2)),L_q(\M \overline{\otimes} B(\ell_2))\big).
\]
As a consequence, we have   that for every $x \in \H_E^c(\M)$, 
\[
\big\| x\big\|_{\H_E^c} = \big\|\cal{D}_c(x) \big\|_{E(\M \overline{\otimes} B(\ell_2))} \approx_E  \big\| x\big\|_{F,\underline{j}},
\]
where the interpolation on the last norm is  taken with respect to the couple $(\H_p^c(\M), \H_q^c(\M))$.

We are now ready to present the proof of (i).
 Let $x \in \H_E^c(\M)$ and fix a representation  $x =\sum_{\nu \in \mathbb{Z}} u_\nu$  in the compatible couple $(\H_p^c(\M), \H_q^c(\M))$ such that:
\begin{equation}\label{rep-norm}
 \Big\| \underline{j}\big(\cdot,\{u_\nu\}_\nu\big)\Big\|_F \leq 2 \big\|x \big\|_{F, \underline{j}}.
 \end{equation}
We recall that for every $\nu \in \mathbb{Z}$, $u_\nu \in \H_p^c(\M) \cap \H_q^c(\M)$. By Corollary~\ref{davis:1}, there exist $a_\nu \in \h_p^d(\M)\cap \h_q^d(\M)$ and  $b_\nu \in \h_p^c(\M) \cap  \h_q^c(\M)$,  satisfying:
\begin{equation}
u_\nu=a_\nu +b_\nu
\end{equation}
and if $s\in \{p,q\}$, then
\begin{equation}\label{double-estimate}
\big\|a_\nu \big\|_{\h_s^d} + \big\|b_\nu \big\|_{\h_s^c}  \leq 
C(p,q)  \big\| u_\nu \big\|_{\H_s^c}.
\end{equation}
For each $\nu \in \mathbb{Z}$,  we consider $\mathcal{D}_d(a_\nu)\in \Delta(\overline{A})$ and  
$U\mathcal{D}_c(b_\nu) \in \Delta(\overline{B})$.
As in \cite{RW}, inequality  \eqref{double-estimate} can be reinterpreted by using the $J$-functionals as follows:
 \begin{equation}\label{J-a1}
\left\{\begin{split}
J\big( t,\mathcal{D}_d(a_\nu); \overline{A}\big) &\leq C(p,q) J(t,u_\nu),\  
t>0,\\
 J\big(t,U\mathcal{D}_c(b_\nu); \overline{B}\big) &\leq C(p,q) J(t,u_\nu), \ t>0.
 \end{split}\right.
\end{equation}
One can show as in \cite[Sublemma~3.3]{RW} that the series $\sum_{\nu \in \mathbb{Z}} \cal{D}_d(a_\nu)$ is weakly unconditionally Cauchy in $E(\M\overline{\otimes}\ell_\infty)$. A fortiori, it is convergent in $\Sigma(\overline{A})$. Similarly, we can also get that $\sum_{\nu \in \mathbb{Z}} U\cal{D}_c(b_\nu)$ is convergent in $\Sigma(\overline{B})$. Set
\[
  \displaystyle{\alpha := \sum_{\nu \in \mathbb{Z}} \mathcal{D}_d(a_\nu) \in\Sigma(\overline{A})}\   \text{ and }\ 
\displaystyle{\beta := \sum_{\nu \in \mathbb{Z}} U\mathcal{D}_c(b_\nu) \in 
\Sigma(\overline{B})}.
\] 
The series $\sum_{\nu \in \mathbb{Z}} \cal{D}_d(a_\nu)$ may be viewed as a representation of $\alpha$ in the interpolation couple $\overline{A}$. Similarly, 
$\sum_{\nu \in \mathbb{Z}} U\cal{D}_c(b_\nu)$ is a representation of $\beta$  in the interpolation couple $\overline{B}$. We claim that:
\begin{equation}\label{d-inequality}
 \big\|  \alpha \big\|_{E(\M \overline{\otimes} \ell_\infty)} +
\big\| \beta\big\|_{E(\M \overline{\otimes} B(\ell_2(\mathbb{N}^2)))} \leq \kappa_E\big\|x\big\|_{\H_E^c}.
\end{equation} 
To see this claim, we observe from \eqref{J-a1}  that for every $s>0$,
\[
 \underline{j}(s,\{\cal{D}_d(a_\nu)\}_\nu;\overline{A}) + \underline{j}(s,\{U\cal{D}_c(b_\nu)\}_\nu; \overline{B})  \leq 2C(p,q) \underline{j}(s,\{u_\nu\}_\nu).
\]
Taking the norms on  the function space $F$ together with  \eqref{rep-norm} gives
\[
\big\|  \alpha \big\|_{F, \underline{j}} +
\big\| \beta\big\|_{F, \underline{j}} \leq 4C(p,q)\big\| x\big\|_{F, \underline{j}}.
\]
This proves \eqref{d-inequality}. To conclude the proof, it is plain that 
there exist $a\in \h_E^d(\M)$ and $b\in \h_E^c(\M)$ such that 
$\a=\mathcal{D}_d(a)$ and  $\b=U\mathcal{D}_c(b)$.  Moreover, it is clear from the construction that $x=a+b$. Indeed, the fact that $\alpha$ is a martingale difference sequence follows from the convergence of the series $\sum_{\nu \in \mathbb{Z}} \cal{D}_d(a_\nu)$ in $\Sigma(\overline{A})$. Similarly,  the representation above also gives that $\beta$ is in the range of $U\cal{D}_c$ in $\Sigma(\overline{B})$. We may now conclude  from \eqref{d-inequality} that 
\begin{equation*}
 \big\|  a \big\|_{\h_E^d(\M)} +
\big\| b\big\|_{\h_E^c(\M)} \leq \kappa_E\big\|x\big\|_{\H_E^c}.
\end{equation*} 
The proof of (i) is complete.
Item~(ii) can be obtained by duality in the same manner as in Part~III of the proof of \cite[Theorem~3.1]{RW}. The details are left to the reader.
\end{proof}

As an immediate application  of Theorem~\ref{E-davis}, we have the following result:

\begin{corollary}\label{appl1} Let $E$ be a symmetric Banach function space with the Fatou property.
\begin{enumerate}[{\rm(i)}]
\item If $E  \in {\rm Int}(L_p, L_2)$  for some $1<p<2$, then 
$\displaystyle{
\H_E^c(\M)=\h_E^d(\M) + \h_E^c(\M)}$
with equivalent norms.
\item If $E \in {\rm Int}(L_2, L_q)$ for some $2<q<\infty$, then 
$\displaystyle{
\H_E^c(\M)=\h_E^d(\M) \cap \h_E^c(\M)}$
with equivalent norms.
\end{enumerate}
\end{corollary}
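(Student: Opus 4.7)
My plan is to derive the corollary as a direct consequence of Theorem~\ref{E-davis} combined with interpolation of endpoint inclusions among the relevant Hardy spaces. I would treat part (ii) first by a direct argument, and then obtain part (i) by a K\"othe-duality argument.

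For part (ii), take $E\in\mathrm{Int}(L_2,L_q)$ with $2<q<\infty$ and apply Theorem~\ref{E-davis} with the interpolation couple $(L_2,L_q)$; its part (ii) yields immediately the continuous inclusion $\h_E^d(\M)\cap\h_E^c(\M)\hookrightarrow\H_E^c(\M)$. For the reverse inclusion $\H_E^c\hookrightarrow\h_E^d\cap\h_E^c$, I would establish separately the two inclusions $\H_E^c\hookrightarrow\h_E^c$ and $\H_E^c\hookrightarrow\h_E^d$ by interpolating the corresponding endpoint inclusions at $s=2$ and $s=q$ via Lemma~\ref{Operator-interpolation}. At $s=2$ the three Hardy spaces $\H_2^c$, $\h_2^c$ and $\h_2^d$ all coincide isometrically with the $L_2$-bounded martingales. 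At $s=q>2$, the inclusion $\H_q^c\hookrightarrow\h_q^c$ is the noncommutative Stein inequality of Junge-Xu, while $\H_q^c\hookrightarrow\h_q^d$ follows from the positive-cone superadditivity $\sum_n\|b_n\|_r^r\leq \|\sum_n b_n\|_r^r$ (valid for positive operators $b_n$ and $r\geq 1$) applied with $b_n=|dx_n|^2$ and $r=q/2$. Combining the two directions yields $\H_E^c=\h_E^d\cap\h_E^c$ with equivalent norms.

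For part (i), let $E\in\mathrm{Int}(L_p,L_2)$ with $1<p<2$. The K\"othe dual $E^\times$ belongs to $\mathrm{Int}(L_2,L_{p'})$ for $p'=p/(p-1)>2$, so part (ii) applies to $E^\times$ and gives $\H_{E^\times}^c=\h_{E^\times}^d\cap\h_{E^\times}^c$ with equivalent norms. Since $E$ is reflexive (as an interpolation space of two reflexive endpoints), the standard dualities for noncommutative martingale Hardy spaces in symmetric spaces provide $(\H_{E^\times}^c)^*=\H_E^c$, $(\h_{E^\times}^d)^*=\h_E^d$, and $(\h_{E^\times}^c)^*=\h_E^c$. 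Combined with the Banach-space identity $(A\cap B)^*=A^*+B^*$ with equivalent norms, dualizing the equality established for $E^\times$ produces $\H_E^c=\h_E^d+\h_E^c$ with equivalent norms.

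The main technical subtlety I expect is the justification of the endpoint inclusion $\H_q^c\hookrightarrow\h_q^d$ at $s=q>2$: the underlying superadditivity $\sum_n\tau(b_n^r)\leq \tau\bigl((\sum_n b_n)^r\bigr)$ is elementary for $r\in[1,2]$ via operator monotonicity of $t\mapsto t^{r-1}$ (one writes $\tau(A^r)=\sum_n\tau(b_n A^{r-1})\geq \sum_n\tau(b_n\cdot b_n^{r-1})=\sum_n\tau(b_n^r)$ with $A=\sum_n b_n$), but requires a more delicate argument in the range $r>2$. A second point requiring care is the verification of the Hardy-space dualities at the symmetric-space level rather than only at the $L_p$-scale, which I would handle by combining the Junge-type isometric embeddings $\cal{D}_c$, $U\cal{D}_c$ and $\cal{D}_d$ with the reflexivity of $E$ and the known duality for symmetric spaces $E(\M)$.
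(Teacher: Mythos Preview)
Your argument for part~(ii) is essentially the paper's: one inclusion comes from Theorem~\ref{E-davis}(ii), the other from interpolating the endpoint inclusions $\H_s^c\hookrightarrow\h_s^d$ and $\H_s^c\hookrightarrow\h_s^c$ for $s\in\{2,q\}$ (using complementation to fit into Lemma~\ref{Operator-interpolation}). One small remark: the ``subtlety'' you flag about $\H_q^c\hookrightarrow\h_q^d$ is in fact immediate---for $q\ge 2$ the identity $L_q(\M;\ell_2^c)\to\ell_q(L_q(\M))$ is a contraction (this is used verbatim in the proof of Corollary~\ref{davis:1}), so no superadditivity argument for $r>2$ is needed.

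For part~(i), however, your route is genuinely different from the paper's, and it contains a gap. The paper argues \emph{directly}, symmetrically to~(ii): Theorem~\ref{E-davis}(i) gives $\H_E^c\subseteq\h_E^d+\h_E^c$ immediately, and the reverse inclusion is obtained by interpolating the endpoint inclusions $\h_v^d\subseteq\H_v^c$ and $\h_v^c\subseteq\H_v^c$ for $v\in\{p,2\}$ (both of which follow from the known $L_v$-level identity $\H_v^c=\h_v^d+\h_v^c$ for $1<v\le 2$). You instead try to dualize~(ii), which forces you to assert that $E$ is reflexive ``as an interpolation space of two reflexive endpoints''. That assertion is false in general: for instance $E=L_{r,\infty}$ with $p<r<2$ lies in ${\rm Int}(L_p,L_2)$ and has the Fatou property, yet is not separable, hence not reflexive. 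Without reflexivity (or at least order continuity of $E^\times$) the Banach-dual identities $(\H_{E^\times}^c)^*=\H_E^c$, $(\h_{E^\times}^c)^*=\h_E^c$, etc.\ that you invoke need not hold, and the passage from $(\,\cdot\cap\cdot\,)^*$ to the sum of duals is no longer available as stated. The direct approach via Theorem~\ref{E-davis}(i) sidesteps all of this and is both shorter and more robust.
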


\begin{proof} 
We already have from Theorem~\ref{E-davis} that $\H_E^c(\M) \subseteq \h_E^d(\M) + \h_E^c(\M)$. The reverse inclusion  follows from the fact that   for $1<v\leq 2$, $\h_v^d(\M) +\h_v^c(\M)=  \H_v^c(\M)$. Indeed, this identification implies in  particular that $\h_v^d(\M) \subset \H_v^c(\M)$ and $\h_v^c(\M)\subseteq \H_v^c(\M)$. A standard use of interpolation then gives $\h_E^d(\M) \subseteq \H_E^c(\M)$ and $\h_E^c(\M)\subseteq \H_E^c(\M)$. We should recall here that since  $E\in  {\rm Int}(L_p, L_2)$  and $1<p<2$, by complementation, we have 
$\h_E^d(\M)\in  {\rm Int}(\h_p^d(\M), \h_2^d(\M))$, $\h_E^c(\M)\in  {\rm Int}(\h_p^c(\M), \h_2^c(\M))$, and $\H_E^c(\M)\in  {\rm Int}(\H_p^c(\M), \H_2^c(\M))$. 
The argument for (ii)  is identical.
\end{proof}

\begin{remark} We do not know  if the statement in the first item  of Theorem~\ref{E-davis} (respectively of Corollary~\ref{appl1}) remains  valid if one only assumes that $E\in {\rm Int}(L_1,L_q)$ for $1<q<\infty$ (respectively $E\in {\rm Int}(L_1,L_2)$ ). Since both assertions hold for $E=L_1$, it is reasonable to conjecture that this should be the case in general. We leave these as  open problems.
\end{remark}

\begin{remark} We conclude this section by calling to the reader's attention    that all statements above admit  corresponding row versions.
\end{remark}


\section{Applications to noncommutative Burkholder/Rosenthal inequalities}


\subsection{The case of noncommutative symmetric spaces}

The main objective of this subsection  is to provide    versions of the Burkholder/Rosenthal inequality for martingales in noncommutative symmetric spaces. Recall that the noncommutative Burkholder/Rosenthal inequalities  were proved by Junge and the third named author for martingales in noncommutative $L_p$-spaces for $1<p<\infty$ (\cite{JX}). We should emphasize here that it is essential to separate the case $1<p \leq 2$ from the one $2\leq p <\infty$. In fact, the original classical case was only proved for $2\leq p<\infty$ and it was  in \cite{JX} that the corresponding case $1<p<2$ was discovered. Our main result in this subsection strengthens the  versions of Burkholder/Rosenthal  inequalities  for noncommutative symmetric spaces from \cite{Dirksen2,RW}. It reads as follows:

\begin{theorem}\label{appl-Burk} Let $E$ be a symmetric Banach function space with the Fatou property.
\begin{enumerate}[{\rm(i)}]
\item If $E  \in {\rm Int}(L_p, L_2)$ for some $1<p<2$,  then 
$\displaystyle{
E(\M)=\h_E^d(\M) + \h_E^c(\M) +\h_E^r(\M)}$
with equivalent norms.
\item If $E \in {\rm Int}(L_2, L_q)$ for some $2<q<\infty$, then 
$\displaystyle{
E(\M)=\h_E^d(\M) \cap \h_E^c(\M)\cap \h_E^r(\M)}$
with equivalent norms.
\end{enumerate}
\end{theorem}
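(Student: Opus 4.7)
\medskip

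\emph{Plan.} The strategy is simply to compose two already-available decompositions: the Davis-type identification of $\H_E^c(\M)$ provided by Corollary~\ref{appl1} (together with its row version, flagged in the closing remark of Section~3), and the noncommutative Burkholder--Gundy inequality in $E(\M)$ at the relevant endpoints. Assuming both, the result is purely formal: for (i), combining $E(\M)=\H_E^c(\M)+\H_E^r(\M)$ with $\H_E^c(\M)=\h_E^d(\M)+\h_E^c(\M)$ and $\H_E^r(\M)=\h_E^d(\M)+\h_E^r(\M)$ yields $E(\M)=\h_E^d(\M)+\h_E^c(\M)+\h_E^r(\M)$; for (ii), combining $E(\M)=\H_E^c(\M)\cap\H_E^r(\M)$ with the two intersection identities from Corollary~\ref{appl1}(ii) and its row version gives $E(\M)=\h_E^d(\M)\cap\h_E^c(\M)\cap\h_E^r(\M)$. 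The reverse inclusions with norm control follow from the trivial inclusions $\h_E^d(\M),\h_E^c(\M)\subset\H_E^c(\M)\subset E(\M)$ and the analogous row chain, which hold whenever $E\in\mathrm{Int}(L_p,L_q)$ with $1<p\le q<\infty$ by interpolation of the corresponding $L_p$-inclusions.

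\emph{The Burkholder--Gundy step.} Thus the only thing that needs to be verified beyond Corollary~\ref{appl1} is the Burkholder--Gundy identity in $E(\M)$ when $E$ is an interpolation space of a couple having $L_2$ as an endpoint. The versions with $E\in\mathrm{Int}(L_p,L_q)$ for $2<p<q<\infty$ and for $1<p<q<2$ are already available from \cite{Dirksen2} and \cite{RW} respectively. To reach the endpoint, I would exploit the fact that at $L_2$ Burkholder--Gundy is trivial by Hilbert space orthogonality: $\|x\|_2^2=\sum_{n\ge1}\|dx_n\|_2^2=\|S_c(x)\|_2^2=\|S_r(x)\|_2^2$, so both the sum and intersection formulations hold isometrically. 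The passage from the two endpoints $L_2$ and $L_p$ (resp.\ $L_2$ and $L_q$) to $E$ then proceeds exactly as in the proof of Theorem~\ref{E-davis}: one takes a representation of $x$ in the couple $(\H_p^c(\M)+\H_p^r(\M),\H_2(\M))$ using Lemma~\ref{interpolation1}, applies Burkholder--Gundy term by term to obtain a splitting $x=y+z$ at each level, and checks via the $J$-functional estimates that the resulting sequences $\{\mathcal{D}_c(y_\nu)\}_\nu$ and $\{\mathcal{D}_r(z_\nu)\}_\nu$ have $F$-controlled norms. The intersection case in (ii) is dual and is handled identically.

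\emph{Main obstacle.} The conceptual content of the theorem lies entirely in Corollary~\ref{appl1}, which itself rests on the simultaneous-norm Davis decomposition of Theorem~\ref{infinite1}/Corollary~\ref{davis:1}; once those are in hand, the present theorem is a routine assembly. The only technical point to watch is that Burkholder--Gundy at the $L_2$ endpoint must be packaged in sum (resp.\ intersection) form rather than as an equality of three norms, so that the Brudnyi--Krugliak style $K$/$J$ interpolation machinery of Lemma~\ref{interpolation1} transfers it correctly to $E(\M)$; but this is precisely the structural setup already used in the proof of Theorem~\ref{E-davis}, so no new difficulty arises.
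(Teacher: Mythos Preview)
Your approach is correct and matches the paper's: combine the Burkholder--Gundy inequality for $E(\M)$ with the Davis-type decompositions of Theorem~\ref{E-davis}/Corollary~\ref{appl1} (the paper packages this as Theorem~\ref{appl-Burk2} plus the easy reverse inclusions by interpolation, but the content is identical). The only point of divergence is that the paper establishes Burkholder--Gundy uniformly for all $E\in\mathrm{Int}(L_p,L_q)$ with $1<p<q<\infty$ (Proposition~\ref{BG}) in one stroke via martingale transforms, the noncommutative Khintchine inequality from \cite{Dirk-Pag-Pot-Suk}, and the Stein inequality---so no separate endpoint argument at $L_2$ is required, and the $J$-functional interpolation you propose for this step, while workable, is more machinery than needed.
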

We remark that (ii) was also obtained  recently by   Jiao  {\it  et al.} in 
\cite[Theorem~1.5]{Jiao-Sukochev-Zanin-Zhou} under the more restrictive assumption that $E \in {\rm Int}(L_2, L_4)$. It is important to note that through the use  of interpolation, it is not difficult to deduce  that if $1<p<2$ and $E  \in {\rm Int}(L_p, L_2)$ then 
$ \h_E^w(\M)\subseteq E(\M)$ for $w\in\{d,c,r\}$. Therefore, we always have in this case that $\h_E^d(\M) +\h_E^c(\M) + \h_E^r(\M) \subseteq E(\M)$. Similarly, when $2 < q<\infty$
and $E\in {\rm Int}(L_2, L_q)$  then $E(\M) \subseteq \h_E^d(\M) \cap \h_E^c(\M) \cap \h_E^r(\M)$. Thus, it is only necessary to prove the respective  reverse inclusions.
To this end, we will prove the following more general result:
\begin{theorem}\label{appl-Burk2} Let $1<p<q<\infty$ and 
 $E$ be a symmetric Banach function space with the Fatou property and   such that 
 $E\in {\rm Int}(L_p, L_q)$.  
Then there exist positive constants  $\delta_E$  and $\eta_E$ such that:
 \begin{enumerate}[{\rm(i)}]
\item 
 for every $x \in E(\M)$,  the following inequality holds:
 \begin{equation*}
\delta_E^{-1} \inf\Big\{ \big\|x^d \big\|_{\h_E^d} +\big\|x^c \big\|_{\h_E^c} + \big\|x^r \big\|_{\h_E^r}\Big\} \leq 
\big\|x\big\|_{E(\M)},
\end{equation*}
where the infimum   is taken over all $x^d \in \h_E^d(\M)$,   $x^c \in \h_E^c(\M)$, and $x^r \in \h_E^r(\M)$  such that $x=x^d +x^c +x^r$;
\item for every $x \in \h_E^d(\M)  \cap \h_E^c(\M) \cap \h_E^r(\M)$, the following inequality holds:
 \begin{equation*}
\big\|x\big\|_{E(\M)} \leq \eta_E \max\Big\{ \big\| x\big\|_{\h_E^d}, \big\|x \big\|_{\h_E^c}, \big\|x \big\|_{\h_E^r}\Big\}.
\end{equation*}
\end{enumerate} 
\end{theorem}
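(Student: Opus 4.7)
My plan is to deduce Theorem~\ref{appl-Burk2} by combining Theorem~\ref{E-davis} (the Davis-type decomposition in symmetric spaces) with the noncommutative Burkholder--Gundy (BG) equivalences in $E(\M)$. As a preliminary step, I need the two forms of BG for $E$: first, every $x \in E(\M)$ admits a splitting $x = u + v$ with $u \in \H_E^c(\M)$, $v \in \H_E^r(\M)$ and $\|u\|_{\H_E^c} + \|v\|_{\H_E^r}$ controlled by a constant multiple of $\|x\|_{E(\M)}$; second, every $x \in \H_E^c(\M) \cap \H_E^r(\M)$ lies in $E(\M)$ with $\|x\|_{E(\M)}$ bounded by the maximum of $\|x\|_{\H_E^c}$ and $\|x\|_{\H_E^r}$. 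Both statements are classical at the endpoints $L_p(\M)$ and $L_q(\M)$ by \cite{JX}, and they lift to $E(\M)$ via the $K/J$-functional interpolation of Lemma~\ref{interpolation1}, in exactly the spirit of the interpolation argument carried out in the proof of Theorem~\ref{E-davis}.

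For (i), I take $x \in E(\M)$ and apply the sum form of BG to write $x = u + v$ with $u \in \H_E^c(\M)$, $v \in \H_E^r(\M)$ and controlled norms. Theorem~\ref{E-davis}(i) applied to $u$ produces a splitting $u = a_1 + x^c$ with $a_1 \in \h_E^d(\M)$, $x^c \in \h_E^c(\M)$ and norm control, and its row counterpart applied to $v$ produces $v = a_2 + x^r$ with $a_2 \in \h_E^d(\M)$, $x^r \in \h_E^r(\M)$. Setting $x^d := a_1 + a_2 \in \h_E^d(\M)$ yields the desired three-term decomposition $x = x^d + x^c + x^r$ with each of the three $\h_E$-norms bounded by a constant multiple of $\|x\|_{E(\M)}$.

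For (ii), I start with $x \in \h_E^d(\M) \cap \h_E^c(\M) \cap \h_E^r(\M)$ and invoke Theorem~\ref{E-davis}(ii) together with its row counterpart to obtain $\|x\|_{\H_E^c} \leq \b_E \max\{\|x\|_{\h_E^d},\|x\|_{\h_E^c}\}$ and $\|x\|_{\H_E^r} \leq \b_E \max\{\|x\|_{\h_E^d},\|x\|_{\h_E^r}\}$. In particular $x \in \H_E^c(\M) \cap \H_E^r(\M)$, and the intersection form of BG from the preliminary step then bounds $\|x\|_{E(\M)}$ by a constant multiple of $\max\{\|x\|_{\h_E^d},\|x\|_{\h_E^c},\|x\|_{\h_E^r}\}$, which is precisely (ii).

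The main obstacle is the preliminary BG step, because the couple $(L_p,L_q)$ may straddle the index $2$, at which the $L_s$-BG inequality switches between its sum form ($s \leq 2$) and its intersection form ($s \geq 2$). Reconciling the two endpoint equivalences into a single BG statement on $E(\M)$ requires running the $K/J$-method argument on the column and row Hardy spaces in parallel. This is the same template already used in the proof of Theorem~\ref{E-davis}, carried out for the BG couple rather than the Davis couple; no essentially new idea is needed. Once BG for $E$ is in place, steps (i) and (ii) are a routine application of the column and row versions of Theorem~\ref{E-davis}.
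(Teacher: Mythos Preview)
Your overall strategy—first establish the two-sided Burkholder--Gundy (BG) inequalities for $E(\M)$, then combine them with the column and row versions of Theorem~\ref{E-davis}—is exactly the paper's proof (see Proposition~\ref{BG} and the short argument following it).

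Where you diverge is in how you justify BG itself. The paper proves Proposition~\ref{BG} directly: Rademacher randomization via the boundedness of martingale transforms on $E(\M)$, then the noncommutative Khintchine inequality of \cite{Dirk-Pag-Pot-Suk} for $E \in {\rm Int}(L_p,L_q)$, then the Stein inequality. No interpolation between the endpoints $L_p$ and $L_q$ is used.

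Your proposed route—lifting BG from the endpoints via the $J$-method ``in exactly the spirit'' of the proof of Theorem~\ref{E-davis}—is not as automatic as you claim. That proof works only because Corollary~\ref{davis:1} supplies, for each piece $u_\nu \in \H_p^c \cap \H_q^c$, a \emph{single} decomposition $u_\nu = a_\nu + b_\nu$ with simultaneous control at both indices; this simultaneity is precisely what makes the $J$-functional comparison \eqref{J-a1} go through. To run the same template for BG(i) when $1<p<2<q$, you would need a simultaneous BG splitting: for each $u_\nu \in L_p(\M) \cap L_q(\M)$, one decomposition $u_\nu = a_\nu + b_\nu$ satisfying $\|a_\nu\|_{\H_s^c} + \|b_\nu\|_{\H_s^r} \le C\,\|u_\nu\|_s$ for \emph{both} $s=p$ and $s=q$. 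You neither construct nor cite such a splitting, and when $p<2<q$ it is not obvious—the natural choices at the two endpoints are different (a genuine column/row split at $s=p$, versus $a_\nu=u_\nu$, $b_\nu=0$ at $s=q$). So the assertion that ``no essentially new idea is needed'' is unjustified. The paper's Khintchine route sidesteps this issue entirely; once you grant Proposition~\ref{BG}, your parts (i) and (ii) are identical to the paper's.
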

Clearly, Theorem~\ref{appl-Burk2} and the preceding discussion imply Theorem~\ref{appl-Burk}. Our strategy for the proof of Theorem~\ref{appl-Burk2} is to use the corresponding Burkholder-Gundy along side our Davis decomposition stated in Theorem~\ref{E-davis}.  The following version of the Burkholder-Gundy inequalities  is implicit in \cite{Dirk-Pag-Pot-Suk}:
\begin{proposition}\label{BG} Let $1<p<q<\infty$ and 
 $E$ be a symmetric Banach function space with the Fatou property such that 
 $E\in {\rm Int}(L_p, L_q)$.  
Then there exist positive constants  $C_E$  and $c_E$ such that:
 \begin{enumerate}[{\rm(i)}]
\item 
 for every $x \in E(\M)$,  the following inequality holds:
 \begin{equation*}
 \inf\Big\{ \big\|x^c \big\|_{\H_E^c} + \big\|x^r \big\|_{\H_E^r}\Big\} \leq 
C_E\big\|x\big\|_{E(\M)}, 
\end{equation*}
where the infimum   is taken over all   $x^c \in \H_E^c(\M)$, and $x^r \in \H_E^r(\M)$  such that $x=x^c +x^r$;
\item for every $x \in \H_E^c(\M) \cap \H_E^r(\M)$, the following inequality holds:
 \begin{equation*}
\big\|x\big\|_{E(\M)} \leq c_E \max\Big\{  \big\|x \big\|_{\H_E^c}, \big\|x \big\|_{\H_E^c}\Big\}.
\end{equation*}
\end{enumerate} 
\end{proposition}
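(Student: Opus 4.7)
My plan is to mirror the proof of Theorem~\ref{E-davis}, using the Pisier-Xu noncommutative Burkholder-Gundy inequality at the endpoints $L_p(\M)$ and $L_q(\M)$ in place of Corollary~\ref{davis:1}, and then transferring to $E(\M)$ via the $J$-functional identification of Lemma~\ref{interpolation1}. The two assertions will be treated separately, with (ii) being an almost immediate consequence of interpolation and (i) requiring the full decomposition machinery.

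For (ii), I will use that the Pisier-Xu Burkholder-Gundy inequality yields bounded inclusions $\H_s^c(\M)\hookrightarrow L_s(\M)$ and $\H_s^r(\M)\hookrightarrow L_s(\M)$ for every $1<s<\infty$. Since the column and row Hardy spaces embed complementedly into $L_s(\M \overline{\otimes} B(\ell_2(\mathbb{N}^2)))$ via $U\mathcal{D}_c$ and $U\mathcal{D}_r$, Lemma~\ref{Operator-interpolation} applied at the endpoints $s=p,q$ lifts these embeddings to $\H_E^c(\M)\hookrightarrow E(\M)$ and $\H_E^r(\M)\hookrightarrow E(\M)$ with constants depending only on $E$. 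Bounding $\|x\|_{E(\M)}$ by either of them then gives $\|x\|_{E(\M)} \leq c_E \max\{\|x\|_{\H_E^c}, \|x\|_{\H_E^r}\}$.

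For (i), the hard direction, I will fix $x\in E(\M)$ and, by Lemma~\ref{interpolation1}, choose a symmetric function space $F$ with $E(\N)=(L_p(\N),L_q(\N))_{F,\underline{j}}$ for every semifinite $\N$, together with a representation $x=\sum_{\nu\in\mathbb{Z}}u_\nu$ in the couple $(L_p(\M),L_q(\M))$ satisfying $\|\underline{j}(\cdot,\{u_\nu\}_\nu)\|_F \leq 2\|x\|_{F,\underline{j}}$. To each $u_\nu\in L_p(\M)\cap L_q(\M)$, I will apply a simultaneous Burkholder-Gundy decomposition producing martingales $a_\nu\in \H_p^c\cap \H_q^c$ and $b_\nu\in \H_p^r\cap \H_q^r$ with $u_\nu=a_\nu+b_\nu$ and
\[
\|a_\nu\|_{\H_s^c}+\|b_\nu\|_{\H_s^r}\leq C(p,q)\|u_\nu\|_{L_s}\quad\text{for both }s\in\{p,q\}.
\]
Transferring via the isometries $U\mathcal{D}_c$ and $U\mathcal{D}_r$ into the ambient couples, the bound above translates into uniform $J$-functional estimates on the sequences $\{U\mathcal{D}_c(a_\nu)\}_\nu$ and $\{U\mathcal{D}_r(b_\nu)\}_\nu$. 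The weak unconditional Cauchyness argument of \cite[Sublemma~3.3]{RW} then yields convergence of $\sum_\nu U\mathcal{D}_c(a_\nu)$ and $\sum_\nu U\mathcal{D}_r(b_\nu)$ in the ambient $\Sigma$-spaces, and their limits pull back through the closed-range isometric embeddings to martingales $x^c\in \H_E^c(\M)$ and $x^r\in \H_E^r(\M)$. By construction $x=x^c+x^r$, and taking $F$-norms of the $\underline{j}$-functional inequalities then produces $\|x^c\|_{\H_E^c}+\|x^r\|_{\H_E^r}\leq C_E\|x\|_{E(\M)}$.

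The main obstacle will be constructing the simultaneous Burkholder-Gundy decomposition of $u_\nu$: one needs a single splitting $u_\nu=a_\nu+b_\nu$ witnessing the $L_p$ and $L_q$ Burkholder-Gundy bounds with a common constant $C(p,q)$. For a single fixed $s$ this is Pisier-Xu, but simultaneity requires an $s$-independent construction. One route is to invoke the noncommutative BG decomposition realized through continuous functional calculus of $S_c(u_\nu)$ and $S_r(u_\nu)$, which is manifestly independent of $s$. Alternatively, the simultaneous decomposition is implicit in the symmetric-space Burkholder-Gundy inequalities of \cite{Dirk-Pag-Pot-Suk}, which were obtained precisely via interpolation methods compatible with the $J$-functional setup; citing their result completes the argument.
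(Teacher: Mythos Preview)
Your plan for (ii) is fine and essentially amounts to the standard interpolation argument. However, your route for (i) is considerably more elaborate than what the paper does and contains a genuine gap.

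The paper's proof is a three-line argument: unconditionality of martingale differences in $E(\M)$ (\cite{Ran15}) gives $\|x\|_{E(\M)}\approx_E \mathbb{E}\big\|\sum_n\epsilon_n dx_n\big\|_{E(\M)}$; the Khintchine inequality for symmetric spaces from \cite[Theorem~4.3]{Dirk-Pag-Pot-Suk} then yields
\[
\big\|(dx_n)\big\|_{E(\M;\ell_2^c)+E(\M;\ell_2^r)}\lesssim\|x\|_{E(\M)}\lesssim\big\|(dx_n)\big\|_{E(\M;\ell_2^c)\cap E(\M;\ell_2^r)};
\]
finally the noncommutative Stein inequality (\cite{Jiao2}) converts the left-hand side into the Hardy-space infimum. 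No $J$-functional machinery is invoked.

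Your approach instead hinges on a \emph{simultaneous} Burkholder--Gundy decomposition of each $u_\nu$: a single splitting $u_\nu=a_\nu+b_\nu$ with $\|a_\nu\|_{\H_s^c}+\|b_\nu\|_{\H_s^r}\leq C(p,q)\|u_\nu\|_s$ for \emph{both} $s=p$ and $s=q$. You correctly flag this as the obstacle but do not resolve it. The Pisier--Xu decomposition for $s<2$ is obtained by duality and is $s$-dependent; your first suggestion (``continuous functional calculus of $S_c$ and $S_r$'') is too vague to constitute a proof, and it is far from clear that any such splitting controls both endpoints when $p<2<q$. Your second suggestion---that the simultaneous decomposition is ``implicit in \cite{Dirk-Pag-Pot-Suk}''---misreads that reference: what \cite{Dirk-Pag-Pot-Suk} proves is the Khintchine inequality directly at the level of the symmetric space $E$, not a simultaneous $L_p/L_q$ splitting. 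Citing it therefore amounts to the paper's short proof and renders your entire $J$-functional detour superfluous.

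A minor point: the isometry $U\mathcal{D}_c$ embeds the \emph{conditioned} space $\h_s^c(\M)$, not $\H_s^c(\M)$; for the latter one uses $\mathcal{D}_c:x\mapsto\sum_n dx_n\otimes e_{n,1}$ into $L_s(\M\overline{\otimes}B(\ell_2))$.
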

\begin{proof} Assume that   $E\in {\rm Int}(L_p,L_q)$  with  $1<p<q<\infty$. By the boundedness of martingale transforms on $E(\M)$ (\cite[Proposition~4.9]{Ran15}), there exists a constant $\kappa_E$ such that  for any given finite  martingale $x$  in $E(\M)$,
\[
\kappa_E^{-1} \mathbb{E}\big\| \sum_{n\geq 1} \epsilon_n dx_n \big\|_{E(\M)}
 \leq \big\| x\big\|_{E(\M)} \leq \kappa_E \mathbb{E}\big\| \sum_{n\geq 1} \epsilon_n dx_n \big\|_{E(\M)}
\]
where $(\epsilon_n)$ denotes a Rademacher sequence on a given probability space.  According to \cite[Theorem~4.3]{Dirk-Pag-Pot-Suk}, we then have,
\[
 (\kappa_E')^{-1} \big\|(dx_n)\big\|_{E(\M; \ell_2^c) + E(\M; \ell_2^r)} \leq \big\| x\big\|_{E(\M)} \leq c_E  \big\|(dx_n)\big\|_{E(\M; \ell_2^c) \cap E(\M; \ell_2^r)}.
\]
Using the   the noncommutative Stein inequality  on the first inequality (\cite{Jiao2}), we deduce that
\[
C_E^{-1}  \big\|x\big\|_{\H_E^c + \H_E^r} \leq \big\| x\big\|_{E(\M)} \leq  c_E\big\|x\big\|_{\H_E^c \cap \H_E^r}.
\]
This proves both items.
\end{proof}
\begin{proof}[Proof of Theorem~\ref{appl-Burk2}]
Let $ x\in E(\M)$ and $\epsilon>0$. By Proposition~\ref{BG}(i), there exists a decomposition 
$x=a^c + a^r$ so that 
\begin{equation}
\| a^c\|_{\H_E^c} + \| a^r\|_{\H_E^r} \leq  C_E \|x\|_{E(\M)} +\varepsilon.
\end{equation}
Applying Theorem~\ref{E-davis} separately on  $a^c$ and $(a^r)^*$, there exist further decompositions $a^c=y^d + y^c$ and $a^r= z^d + z^r$ with $y^d, z^d \in \h_E^d(\M)$, $y^c \in \h_E^c(\M)$, and $z^r \in \h_E^r(\M)$ satisfying:
\[
\|y^d\|_{\h_E^d} + \|y^c\|_{\h_E^c}\leq \delta_E \|a^c\|_{\H_E^c} +\epsilon \ \ \text{and}\ \ 
\|z^d\|_{\h_E^d} + \|z^r\|_{\h_E^r}\leq \delta_E \|a^r\|_{\H_E^r} +\epsilon.
\]
Set $x^d:= y^d +z^d$, $x^c:=y^c$, and $x^r:=z^r$. Clearly, $x=x^d + x^c + x^r$ and the previous two inequalities lead to:
\[
\big\|x^d \big\|_{\h_E^d} +\big\|x^c \big\|_{\h_E^c} +\big\|x^r \big\|_{\h_E^r} \leq \delta_EC_E \|x\|_{E(\M)} + (\delta_E +2)\epsilon.
\]
This proves (i). Item (ii) is similar.  Indeed, from  combining Theorem~\ref{E-davis}(ii) and Proposition~\ref{BG}(ii), we deduce  that for every finite martingale $x$, one has:
\begin{align*}
\|x\|_{E(\M)} &\leq c_E \max\{ \|x\|_{\H_E^c(\M)}, \|x\|_{\H_E^r(\M)}\}\\
&\leq c_E \beta_E  \max\{ \|x\|_{\h_E^d(\M)}, \|x\|_{\h_E^c(\M)}, \|x\|_{\h_E^r(\M)}\}.
\end{align*}
The proof is complete.
\end{proof}

\begin{remark} In order to have equivalences of norms as stated in Theorem~\ref{appl-Burk}, 
the assumptions used there  are in general necessary.  Indeed, 
if $E$ is a symmetric Banach function space  that satisfies the equivalences of norms  as stated in Theorem~\ref{appl-Burk}  then a fortiori,  martingale difference sequences are unconditional in $E(\M)$. From \cite{LT}, it follows that there exist $1<p  \leq q <\infty$ so that $E\in {\rm Int}(L_p,L_q)$. On the other hand, it was noted in  \cite{LeM-Suk} that if $1<p<2<q<\infty$ then $L_p(\M) \cap L_q(\M)$ fails to satisfy the noncommutative    Khintchine  inequalities. In particular, it must fail the equivalences of norms stated in Theorem~\ref{appl-Burk}. This shows that separating the two cases $E \in {\rm Int}(L_p,L_2)$ for $1<p<2$ and $E\in {\rm Int}(L_2,L_q)$ for $2<q<\infty$ are necessary. On the other hand, there are symmetric function spaces with Boyd indices equal to $2$ but do not appear to belong to either of the two classes of functions considered in Theorem~\ref{appl-Burk}. For instance, we do not know if either of the versions of the noncommutative Burkholder/Rosenthal inequalities  in Theorem~\ref{appl-Burk} apply to martingales in  $L_{2,\infty}(\M,\T)$, or more generally in  $L_{2,q}(\M,\T)$ for any $1\le q\neq2\le\infty$.
\end{remark}

\subsection{Modular inequalities}

In this subsection, we focus on noncommutative moment inequalities associated with Orlicz functions, which were considered in \cite{Bekjan-Chen, Dirksen-Ricard, RW2}.
We will assume throughout that 
 $\Phi$ is an Orlicz function satisfying the $\Delta_2$-condition, that is, for some constant $C>0$,
\begin{equation}
\Phi(2t) \leq C \Phi(t) \quad  t\geq 0.
\end{equation}
We denote by $L_\Phi$ the Orlicz function space associated to $\Phi$. 
Below, we write $\H_\Phi^c(\M)$, $\h_\Phi^c(\M)$, {\it etc}. for  martingale Hardy spaces $\H_{L_\Phi}^c(\M)$, $\h_{L_\Phi}^c(\M)$, {\it etc}.  We make the observation that if  
$L_\Phi \nsubseteq L_2 + L_\infty$, then for an   $x \in \h_\Phi^c(\M)$, 
  the $\Phi$-moment $\T\big[\Phi(s_c(x))\big]$ is understood  to be  the quantity $\T \otimes \tr \big[ \Phi(|U\mathcal{D}_c(x)|)\big]$ as fully detailed  in \cite{RW2}.

Given $1\leq p\leq q < \infty$,  we recall that an Orlicz function $\Phi$  is said to be \emph{$p$-convex} if the function $t \mapsto\Phi(t^{1/p})$ is convex, and to be  \emph{$q$-concave} if the function $t\mapsto \Phi(t^{1/q})$ is concave. The function $\Phi$ satisfies the $\Delta_2$-condition if and only if  it is $q$-concave for some $q<\infty$. Recall the so-called  Matuzewska-Orlicz indices $p_\Phi$ and $q_\Phi$ of $\Phi$: 
  \[
  p_\Phi=\lim_{t\to 0^+}\frac{\log M_\Phi(t)}{\log t}\;\text{ and }\; q_\Phi=\lim_{t\to\infty}\frac{\log M_\Phi(t)}{\log t}\,,
  \]
 where 
  \[M_\Phi(t)=\sup_{s>0}\frac{\Phi(ts)}{\Phi(s)}\,.\]
The indices $p_\Phi$ and $q_\Phi$ are used in the previous papers \cite{Bekjan-Chen, Bekjan-Chen-Ose, Dirksen-Ricard, RW2} instead of the convexity and concavity indices in the present one. It is easy to see that $p\le p_\Phi\le q_\Phi\le q$ if $\Phi$ is $p$-convex and $q$-concave. We refer to \cite{Maligranda2}   for backgrounds on  Orlicz functions and spaces. 

As part of our motivation, we state   the following $\Phi$-moment  version of the noncommutative Burkholder-Gundy inequality:

\begin{theorem}[{\cite{Bekjan-Chen,Dirksen-Ricard}}]\label{BG-Phi}
Let $1<p<q<\infty$ and 
$\Phi$  be a $p$-convex and $q$-concave Orlicz function. Then there exists a positive constant $c_\Phi$ such that for every $x\in L_\Phi(\M)$,
\[
c_\Phi^{-1}  \inf\Big\{ \T\big[ \Phi( S_c( y))\big] +  \T\big[ \Phi( S_r( z))\big] \Big\}\leq \T\big[\Phi(|x|)\big] \leq c_\Phi \max\Big\{  \T\big[ \Phi( S_c( x))\big],  \T\big[ \Phi( S_r( x))\big]\Big\},
\]
where the infimum on the first inequality is taken over all $y \in \H_\Phi^c(\M)$ and $z\in \H_\Phi^r(\M)$ such that $x=y+z$.
\end{theorem}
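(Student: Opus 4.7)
The plan is to reduce both inequalities in the theorem to the noncommutative Burkholder--Gundy inequalities at the $L_p$ and $L_q$ endpoints (due to Junge--Xu) via an Orlicz-moment transfer principle. Since $\Phi$ is $p$-convex and $q$-concave with $1<p<q<\infty$, the Matuszewska--Orlicz indices satisfy $p \leq p_\Phi \leq q_\Phi \leq q$, so in particular $L_\Phi \in {\rm Int}(L_p,L_q)$. Proposition~\ref{BG} applied to $E = L_\Phi$ already delivers the norm-version of the Burkholder--Gundy inequality in $L_\Phi(\M)$; the heart of the matter is to upgrade this norm statement to a $\Phi$-moment statement.

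The key lemma I would establish is a noncommutative Calder\'on--Mityagin type transfer principle: whenever $T$ is a positively $1$-homogeneous sublinear operator bounded from $L_p(\N_1)$ to $L_p(\N_2)$ and from $L_q(\N_1)$ to $L_q(\N_2)$, and $\Phi$ is $p$-convex, $q$-concave and satisfies $\Delta_2$, there is a constant $c_\Phi$ such that $\T_{\N_2}[\Phi(|T\xi|)] \leq c_\Phi\, \T_{\N_1}[\Phi(|\xi|)]$ for every $\xi \in L_\Phi(\N_1)$. One proves this by a Calder\'on-type decomposition at scale $\lambda$ using spectral projections of $\xi$, estimating each piece at whichever of the $L_p$ or $L_q$ endpoints is efficient, and integrating the resulting distributional bound against $d\Phi$; the $\Delta_2$-condition together with the convexity/concavity of $\Phi$ make the integration converge and deliver the stated Orlicz-moment bound.

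Granted the transfer principle, both halves of the theorem follow in parallel. For the upper bound I take $T$ to be the ``reconstruction from square functions'' map sending the pair $(\cal{D}_c(x),\cal{D}_r(x)) \in L_s(\M\overline{\otimes} B(\ell_2)) \oplus L_s(\M \overline{\otimes} B(\ell_2))$ to the martingale $x$; the Junge--Xu inequalities state precisely that $T$ is bounded at $s=p$ and $s=q$ with $\|x\|_s \leq C_s \max\{\|S_c(x)\|_s, \|S_r(x)\|_s\}$, so the transfer principle yields $\T[\Phi(|x|)] \leq c_\Phi \max\{\T[\Phi(S_c(x))], \T[\Phi(S_r(x))]\}$. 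For the lower bound I use the dual form of the endpoint Burkholder--Gundy, which provides a bounded decomposition map $x \mapsto (y,z) \in \H_s^c(\M) \oplus \H_s^r(\M)$ with $x = y + z$, and apply the transfer principle to this map to obtain the $\Phi$-moment version of the decomposition.

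The main obstacle will be the transfer principle itself. In the commutative setting it follows from classical Boyd-index interpolation on Orlicz spaces, but noncommutatively pointwise manipulations are unavailable and the argument must be run either via spectral truncations producing a good-$\lambda$ inequality, or via a $J$-functional representation of $L_\Phi$ as in Lemma~\ref{interpolation1} combined with a careful bookkeeping of convexity/concavity constants analogous to the one used in the proof of Theorem~\ref{E-davis}. An additional subtlety for the lower bound is the non-uniqueness of the decomposition: one must choose the $L_p$- and $L_q$-decompositions of the atoms $u_\nu$ in a compatible way so that the assembled martingales $y$ and $z$ satisfy the $\Phi$-moment estimate simultaneously.
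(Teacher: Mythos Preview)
Your plan diverges substantially from the paper's proof, and the upper-bound half has a genuine gap when $q>2$.

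The paper does not interpolate the Burkholder--Gundy inequalities at all. For the right-hand inequality it simply quotes \cite[Corollary~3.3]{Dirksen-Ricard}. For the left-hand inequality it invokes the $\Phi$-moment Khintchine inequality of \cite{Bekjan-Chen}: randomize the martingale differences by a Rademacher sequence, use the $\Phi$-moment boundedness of martingale transforms to compare $\T[\Phi(|x|)]$ with $\mathbb{E}\,\T[\Phi(|\sum_n \epsilon_n\,dx_n|)]$, apply Khintchine to produce a decomposition of $(dx_n)$ into column and row pieces with the right $\Phi$-moment control, and then turn those pieces back into martingales with the $\Phi$-moment Stein inequality. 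All three ingredients (transforms, Khintchine, Stein) are already in \cite{Bekjan-Chen}, so the whole argument is a few lines of bookkeeping rather than an interpolation scheme.

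The obstruction to your plan is the ``reconstruction'' map you use for the upper bound. To run Lemma~\ref{interpol-lemma} you need a \emph{linear} operator $T$ defined on all of $L_s(\N_1)$, for some fixed von Neumann algebra $\N_1$, that is bounded into $L_s(\M)$ at \emph{both} $s=p$ and $s=q$ and satisfies $T(\cal{D}_c(x),\cal{D}_r(x))=x$. Any such $T$ extended off the diagonal must satisfy $\|T(a,b)\|_s\leq C_s\|(a,b)\|_{L_s(\N_1)}$, and since the direct-sum modular is additive you are ultimately forced to bound $\|x\|_q$ by a constant times $\|S_c(x)\|_q$ (or $\|S_r(x)\|_q$) alone. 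That fails for $q>2$: the noncommutative Burkholder--Gundy inequality in that range requires \emph{both} square functions simultaneously, and the inclusion $\H_q^c(\M)\subset L_q(\M)$ is false. The inequality $\|x\|_s\leq C_s\max\{\|S_c(x)\|_s,\|S_r(x)\|_s\}$ is true on the diagonal image of $x$, but it does not arise from a bounded linear map on the ambient $L_s$-space, so the transfer principle cannot be applied to it directly. This is precisely why \cite{Dirksen-Ricard} uses a different argument.

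Your plan for the lower bound---assembling compatible $L_p$/$L_q$ decompositions of the atoms $u_\nu$ via a $J$-representation---would work and is essentially the machinery the paper deploys later for Theorem~\ref{Phi-davis}, but it is far heavier than needed here: the $\Phi$-moment Khintchine inequality already hands you a simultaneous decomposition at the $\Phi$ level, so there is no need to build one from endpoint data.
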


\begin{proof} The second inequality is from \cite[Corollary~3.3]{Dirksen-Ricard}. The first one follows from a $\Phi$-moment Khintchine inequality proved in \cite{Bekjan-Chen} which states that for any given finite sequence $(a_k)$ in $L_\Phi(\M)$,
\begin{equation}\label{Kh-Phi}
\inf\Big\{ \T\Big[ \Phi\big( \big( \sum_k |b_k|^2)^{1/2}\big)\big)\Big] +  \T\Big[ \Phi\big( \big( \sum_k |c_k^*|^2)^{1/2}\big)\big)\Big] \Big\}\leq C\mathbb{E}\Big[
\T\big[\Phi\big( \big| \sum_k \epsilon_k  a_k \big|\big)\big]  \Big],
\end{equation}
where $(\epsilon_k)_{k\geq 1}$ is a Rademacher sequence and the infimum runs over all  decompositions $a_k=b_k +c_k$ with $b_k$ and $c_k$ in $L_\Phi(\M)$. We should note that \eqref{Kh-Phi} was stated in \cite{Bekjan-Chen} under the assumption that $1<p_\Phi \leq q_\Phi <2$ but the proof given there apply verbatim to the  present situation. It is now standard to deduce the first inequality from \eqref{Kh-Phi} using the $\Phi$-moment versions  of  the noncommutative Stein inequality and  martingale transforms. Both of these  results were proved in \cite{Bekjan-Chen}.
\end{proof}
It is a natural question if  the Burkholder/Rosenthal version of the above theorem holds. 
A first attempt in this direction was done in \cite{RW2} but the results obtained there  require far more restrictive assumption than the one in Theorem~\ref{BG-Phi}. As in the case of noncommutative symmetric spaces, our approach is based on the consideration of our Davis decomposition. 
The following is one of our  main results in this subsection. It  is  the $\Phi$-moment analogue of  the Davis decomposition stated in
Theorem~\ref{E-davis}.
\begin{theorem}\label{Phi-davis} Let $1<p<q<\infty$ and 
$\Phi$  be a $p$-convex and $q$-concave Orlicz function. Then there exist positive constants $\a_\Phi$ and $\b_\Phi$ such that:
\begin{enumerate}[{\rm(i)}]
 \item for every martingale $x \in \H_\Phi^c(\M)$, the following inequality holds:
 \begin{equation*}
\a_\Phi^{-1}  \inf\Big\{ \T\big[ \Phi( s_c( x^c))\big] +   \sum_{n\geq 1} \T\big[\Phi(|dx_n^d|)\big] \Big\}\leq \T\big[\Phi(S_c(x))\big], 
\end{equation*}
where  the infimum   is  taken over all  $x^c \in \h_\Phi^c(\M)$ and $x^d \in \h_\Phi^d(\M)$ such that $x= x^c  + x^d$;
\item for every $x \in \h_\Phi^d(\M) \cap \h_\Phi^c(\M)$, the following inequality holds:
\begin{equation*}
\T\big[\Phi(S_c(x))\big] \leq \b_\Phi \max\Big\{ \sum_{n\geq 1}\T\big[ \Phi\big( |dx_n|\big) \big], \T\big[ \Phi( s_c( x))\big]\Big\}.
\end{equation*}
\end{enumerate}
\end{theorem}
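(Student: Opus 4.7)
The plan is to follow the structure of the proof of Theorem~\ref{E-davis} and upgrade it to the modular level. Since $\Phi$ is $p$-convex and $q$-concave with $1<p<q<\infty$, the Orlicz space $L_\Phi \in {\rm Int}(L_p,L_q)$, so Theorem~\ref{E-davis} already yields a norm-level version of both statements; the task is to upgrade from the Luxemburg norm to the modular $\rho_\Phi(\cdot):=\T[\Phi(|\cdot|)]$, which is not scale invariant.

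For part (i), I would first pick auxiliary exponents $p_0,q_0$ with $1<p_0<p<q<q_0<\infty$ so that $\Phi$ remains $p_0$-convex and $q_0$-concave, and follow the $J$-functional scheme from the proof of Theorem~\ref{E-davis}(i). By Lemma~\ref{interpolation1}, $L_\Phi(\N)=(L_{p_0}(\N),L_{q_0}(\N))_{F,\underline{j}}$ for some symmetric function space $F$ and every semifinite $\N$. Fix a nearly optimal representation $x=\sum_{\nu\in\mathbb{Z}} u_\nu$ in the compatible couple $(\H_{p_0}^c(\M),\H_{q_0}^c(\M))$, apply Corollary~\ref{davis:1} (with $p=p_0$, $q=q_0$) to each summand $u_\nu$ to obtain $u_\nu=a_\nu+b_\nu$ with simultaneous control of both the $\h_{p_0}$- and $\h_{q_0}$-norms, and set $x^d:=\sum_\nu a_\nu$, $x^c:=\sum_\nu b_\nu$. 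Convergence follows exactly as in the proof of Theorem~\ref{E-davis}(i).

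The crucial step is translating the resulting $J$-functional bound into the modular inequality. For this I would invoke the fact that whenever $\Phi$ is $p_0$-convex and $q_0$-concave with $1<p_0<q_0<\infty$, the modular $\rho_\Phi$ is equivalent, up to a constant depending only on $\Phi$, to an Orlicz-type interpolation functional built from the $J$-representation in $(L_{p_0},L_{q_0})$. Applied to the embeddings $U\cal{D}_c(x^c)\in L_\Phi(\M\overline{\otimes} B(\ell_2(\mathbb{N}^2)))$ and $\cal{D}_d(x^d)\in L_\Phi(\M\overline{\otimes}\ell_\infty)$, this equivalence converts the per-level $\h_{p_0}$/$\h_{q_0}$ bounds into the desired modular bound
\[
\T[\Phi(s_c(x^c))]+\sum_n\T[\Phi(|dx^d_n|)]\le\a_\Phi\,\T[\Phi(S_c(x))].
\]
For part (ii), the same framework applies in reverse: start from the norm-level upper bound furnished by Theorem~\ref{E-davis}(ii), then, using the same interpolation-at-modular-level machinery together with the $\Phi$-moment Stein and martingale-transform inequalities of \cite{Bekjan-Chen}, deduce the modular inequality
\[
\T[\Phi(S_c(x))]\le\b_\Phi\max\{\textstyle\sum_n\T[\Phi(|dx_n|)],\,\T[\Phi(s_c(x))]\}.
\]

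The main obstacle is the modular-level interpolation common to both parts. Unlike the Luxemburg-norm version, where homogeneity reduces everything to unit-ball estimates, the modular inequality must be tracked dyadically across the $J$-representation. The two-sided growth bound $\min(\mu^{p_0},\mu^{q_0})\Phi(s)\le\Phi(\mu s)\le\max(\mu^{p_0},\mu^{q_0})\Phi(s)$, a direct consequence of $p_0$-convexity and $q_0$-concavity, is precisely what keeps the per-level constants summable to finite $\a_\Phi$ and $\b_\Phi$; carrying out this bookkeeping in a way compatible with the isometric embeddings $U\cal{D}_c$ and $\cal{D}_d$ will be the technical heart of the argument.
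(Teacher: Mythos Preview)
Your plan for part~(i) is essentially the paper's approach: reduce to auxiliary exponents $1<p_0<p_\Phi\le q_\Phi<q_0<\infty$, pick a $J$-representation of $x$ in $(\H_{p_0}^c,\H_{q_0}^c)$, apply Corollary~\ref{davis:1} to each piece, and then convert the per-level $J$-control into a modular bound. The step you label ``the modular $\rho_\Phi$ is equivalent to an Orlicz-type interpolation functional built from the $J$-representation'' is exactly the technical heart, and the paper makes it precise via Lemma~\ref{basic:ineq} (taken from \cite{RW2}): one first uses the relation $J(t,u(t))\lesssim K(t,\xi)$ and the identity $\T[\Phi(S_c(x))]\approx\int_0^\infty\Phi[t^{-1}K(t,\xi;L_1,L_\infty)]\,dt$, then Lemma~\ref{basic:ineq}(ii) to pass to the $(L_{p_0},L_{q_0})$ couple, and after the Davis step Lemma~\ref{basic:ineq}(iii) and (i) to return to the $(L_1,L_\infty)$ $K$-functional, which is the modular. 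Your two-sided growth bound on $\Phi$ is indeed what drives these lemmas, but you should be aware that the bookkeeping is already packaged in Lemma~\ref{basic:ineq}; invoking it rather than redoing it is both shorter and safer. Note also that the paper does \emph{not} use Lemma~\ref{interpolation1} here: the norm-level functor $(\cdot)_{F,\underline{j}}$ cannot see modulars, so the $F$-space plays no role in this proof.

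For part~(ii) there is a genuine gap. Starting from the norm-level inequality of Theorem~\ref{E-davis}(ii) and then ``upgrading'' to the modular does not work: a Luxemburg-norm inequality $\|S_c(x)\|_{L_\Phi}\le C\max\{\|x\|_{\h_\Phi^d},\|x\|_{\h_\Phi^c}\}$ does \emph{not} imply the corresponding modular inequality with any constant, precisely because $\rho_\Phi$ is not homogeneous and the $\Delta_2$-constant blows up with the scale $C$. Nor can you run ``the same framework in reverse'': the $J$-method produces decompositions and hence controls sums from above, whereas (ii) asks you to bound a single quantity by a maximum of two others, which is a $K$-functional-type statement on the \emph{target} side and does not fit the $J$-scheme you used in (i). The paper proceeds instead by duality: observe that $\Phi^*$ is $q'$-convex and $p'$-concave, apply part~(i) to $\Phi^*$, and then use the Young-type identity of Lemma~\ref{duality} together with the trace pairing $\T_{\N}[\Phi(|w|)]+\T_{\N}[\Phi^*(y)]=\T(xv)$ to transfer the decomposition of the dual martingale $v$ back to a modular bound on $S_c(x)$. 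This duality step, together with the $\Phi^*$-moment Stein projection \eqref{projection}, is what replaces your missing ``reverse'' argument.
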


Before we present the proof,  we need some preparations. We  first record  few technical facts  from interpolation and duality that we will need in the sequel. 

\begin{lemma}[{\cite[Lemma~6.2]{Jiao-Sukochev-Zanin}}]\label{interpol-lemma}
Let $(\M_1,\T_1)$ and $(\M_2, \T_2)$ be semifinite von Neumann algebras and $\Phi$ be a $p$-convex and $q$-concave Orlicz function for  $1\leq p\leq q<\infty$. If $W: L_p(\M_1) \to L_p(\M_2)$ and $W: L_q(\M_1) \to L_q(\M_2)$ are bounded linear operators, then  there exists a constant $C_\Phi$  satisfying:
\[
\T_2\big[ \Phi(|Wx|)\big] \leq C_\Phi \T_1\big[ \Phi(|x|)\big], \quad x \in L_\Phi(\M_1).
\]
\end{lemma}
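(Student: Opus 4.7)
The plan is to adapt the classical Marcinkiewicz interpolation argument to the Orlicz modular $\T[\Phi(|\cdot|)]$, leveraging the $p$-convexity of $\Phi$ (to control contributions from the low-spectrum part of $x$) and the $q$-concavity of $\Phi$ (to control the high-spectrum part). After normalizing so that $\|W:L_p(\M_1)\to L_p(\M_2)\|$ and $\|W:L_q(\M_1)\to L_q(\M_2)\|$ are both at most $1$ (the general case absorbs these constants into $C_\Phi$), the strategy is the standard three-step scheme: spectral cut-off, Chebyshev, and layer-cake integration.

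For each $s>0$, let $e_s=\chi_{(s,\infty)}(|x|)$ and write $x=xe_s+x(1-e_s)$. Since $|Wx|>2s$ forces either $|W(xe_s)|>s$ or $|W(x(1-e_s))|>s$, the noncommutative Chebyshev inequality together with the $L_p$-bound applied to $W(xe_s)$ and the $L_q$-bound applied to $W(x(1-e_s))$ yields
\[
\T_2\big(\chi_{(2s,\infty)}(|Wx|)\big) \leq s^{-p}\,\T_1(|x|^p e_s) + s^{-q}\,\T_1(|x|^q(1-e_s)),
\]
where I have used that $e_s$ commutes with $|x|$ to identify $\|xe_s\|_p^p$ with $\T_1(|x|^p e_s)$. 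Multiplying this estimate by $\Phi'(s)$, integrating over $s>0$, invoking the layer-cake identity $\T[\Phi(|y|)]=\int_0^\infty\Phi'(r)\,\T(\chi_{(r,\infty)}(|y|))\,dr$, using the $\Delta_2$-condition to absorb the factor of $2$, and applying Fubini to interchange orders of integration, I arrive at
\[
\T_2[\Phi(|Wx|)] \leq C\,\T_1\Big(|x|^p\int_0^{|x|}\Phi'(s)s^{-p}\,ds\Big) + C\,\T_1\Big(|x|^q\int_{|x|}^\infty\Phi'(s)s^{-q}\,ds\Big).
\]

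At this stage the proof reduces to verifying the pointwise Hardy-type inequality
\[
t^p\int_0^t\Phi'(s)s^{-p}\,ds + t^q\int_t^\infty\Phi'(s)s^{-q}\,ds \leq C_\Phi\,\Phi(t), \qquad t>0,
\]
which after integration by parts amounts to controlling $\int_0^t\Phi(s)s^{-p-1}\,ds$ and $\int_t^\infty\Phi(s)s^{-q-1}\,ds$. The $p$-convexity of $\Phi$ (equivalently, $\Phi(s)/s^p$ non-decreasing) dominates the first integral on $(0,t]$, while the $q$-concavity (equivalently, $\Phi(s)/s^q$ non-increasing) dominates the second on $[t,\infty)$. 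The main obstacle is the endpoint behaviour: at the extremes $\Phi(t)=t^p$ and $\Phi(t)=t^q$ these integrals diverge logarithmically, so one must either handle the endpoint cases directly---where the target inequality is immediate from the $L_p$- or $L_q$-boundedness of $W$---or slightly enlarge/shrink the exponents and interpolate $W$ onto an intermediate $L_r$-space via Lemma~\ref{Operator-interpolation} before running the modular Marcinkiewicz step. This careful accounting of exponents is precisely what is carried out in \cite[Lemma~6.2]{Jiao-Sukochev-Zanin}.
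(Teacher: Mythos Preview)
The paper does not prove this lemma; it is simply quoted from \cite{Jiao-Sukochev-Zanin}, so there is no in-paper argument to compare against. Your Marcinkiewicz scheme is standard and the computation up to the displayed Hardy-type inequality is correct, but that inequality is exactly where the approach breaks down under the stated hypotheses.

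The pointwise estimate $t^{p}\int_0^{t}\Phi'(s)s^{-p}\,ds\le C_\Phi\,\Phi(t)$ is \emph{false} in general for a merely $p$-convex $\Phi$: already for $\Phi(t)=t^{p}$ one has $t^{p}\int_0^{t}\Phi'(s)s^{-p}\,ds=p\,t^{p}\int_0^{t}s^{-1}\,ds=+\infty$, so your bound is vacuous in precisely the case where the conclusion is just the assumed $L_p$-boundedness of $W$. More to the point, the same divergence persists for $\Phi(t)=t^{p}\log(e+t)$, which is $p$-convex and $q$-concave for any $q\ge p+1$, so this is not merely a boundary artefact that your first fix (handling $\Phi=t^{p}$ and $\Phi=t^{q}$ separately) can absorb. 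Your second fix fares no better: to make $\int_0^{t}\Phi'(s)s^{-p_1}\,ds$ converge via $p$-convexity you would need $p_1<p$, hence $W$ bounded on $L_{p_1}$ with $p_1<p$, which is not given; moving instead to $p_1>p$ via Lemma~\ref{Operator-interpolation} requires $\Phi$ to be $p_1$-convex, a strictly stronger hypothesis.

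The underlying reason is that the Chebyshev step degrades the strong-type hypothesis to weak type, and weak-type Marcinkiewicz interpolation genuinely needs the indices of $\Phi$ to sit \emph{strictly} inside $(p,q)$. A correct proof must retain the strong-type information---for instance by working through the $K$-functional for the couple $(L_p,L_q)$ (Holmstedt's formula) together with the characterisation of $p$-convexity/$q$-concavity in terms of the monotonicity of $\Phi(t)/t^{p}$ and $\Phi(t)/t^{q}$, rather than through distribution-function estimates.
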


Lemma~\ref{interpol-lemma}  shows in particular that if $\Phi$ is $p$-convex and $q$-concave then the Orlicz function space $L_\Phi$ belongs to ${\rm Int}(L_p,L_q)$.

\begin{lemma}[{\cite{RW2})}]\label{basic:ineq} Let $\N$ be a semifinite von Nemmann algebra and $\Phi$ be an Orlicz function such that $1<p<p_\Phi\leq q_\Phi <q <\infty$. The following inequalities hold:
\begin{enumerate}[{\rm(i)}]
\item For every $y \in L_p(\N) + L_q(\N)$,
\[
\int_0^\infty \Phi\big[t^{-1}K\big(t,y; L_1(\N), \N \big)\big]\ dt \leq C_{\Phi,p,q}
\int_0^\infty \Phi\big[t^{-1/p}K\big(t^{1/p-1/q},y; L_p(\N),  L_q(\N) \big)\big]\ dt.\]
\item If $y \in L_p(\N) \cap L_q(\N)$ and $u(\cdot)$ is a representation of $y$ in the couples $(L_p(\N), L_q(\N))$ and $(L_1(\N), \N)$ then,
\[\int_0^\infty \Phi\big[t^{-1/p}J\big(t^{1/p-1/q},u(t); L_p(\N),  L_q(\N) \big)\big]\ dt
\leq C_{\Phi,p,q} \int_0^\infty \Phi\big[t^{-1}J\big(t,u(t); L_1(\N), \N \big)\big]\ dt.\]
\item For every $y \in L_p(\N) + L_q(\N)$, 
\[
\int_0^\infty \Phi\big[t^{-1/p}K\big(t^{1/p-1/q},y; L_p(\N),  L_q(\N) \big)\big]\ dt \leq C_{\Phi,p,q} \inf \Big\{\int_0^\infty \Phi\big[t^{-1/p}J\big(t^{1/p-1/q},u(t); L_p(\N),  L_q(\N) \big)\big]\ dt\Big\},
\]
where the infimum is taken over all representations $u(\cdot)$ of $y$.
\end{enumerate}
\end{lemma}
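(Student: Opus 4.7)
The three items have rather different flavors: (ii) is a pointwise consequence of elementary interpolation inequalities for noncommutative $L_r$-norms, (i) follows by combining Holmstedt's reiteration formula with H\"older's inequality, and (iii) is the substantive statement, amounting to the Brudnyi--Krugliak fundamental lemma of interpolation applied to the couple $(L_p(\N),L_q(\N))$ with a $\Phi$-based parameter space. I would treat them in this order.

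For (ii), the starting point is the classical interpolation inequality $\|u\|_r\le\|u\|_1^{1/r}\|u\|_\infty^{1-1/r}$, valid for $1\le r\le\infty$ on any semifinite $\N$ (it descends from the analogous pointwise bound for the generalized singular number function). Writing $a=\|u(t)\|_1$, $b=\|u(t)\|_\infty$, and $\alpha=1/p-1/q$, this yields $t^{-1/p}\|u(t)\|_p\le (a/t)^{1/p}b^{1-1/p}\le\max(a/t,b)$ and similarly $t^{-1/q}\|u(t)\|_q\le\max(a/t,b)$, where the last step is just the weighted geometric mean inequality. Since $t^{-1/p}J(t^\alpha,u(t);L_p,L_q)=\max\bigl(t^{-1/p}\|u(t)\|_p,\,t^{-1/q}\|u(t)\|_q\bigr)$, one gets the pointwise comparison $t^{-1/p}J(t^\alpha,u(t);L_p,L_q)\le t^{-1}J(t,u(t);L_1,\N)$; applying the monotone $\Phi$ and integrating yields (ii), in fact with constant $1$. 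For (i) I would invoke Holmstedt's reiteration formula
\[
K\bigl(t^\alpha,y;L_p(\N),L_q(\N)\bigr)\approx_{p,q}\Bigl(\int_0^t\mu_s(y)^p\,ds\Bigr)^{1/p}+t^\alpha\Bigl(\int_t^\infty\mu_s(y)^q\,ds\Bigr)^{1/q},
\]
classical in the commutative case and transferred to the noncommutative setting via the generalized singular numbers. H\"older's inequality gives $t^{-1}K(t,y;L_1,\N)=t^{-1}\int_0^t\mu_s(y)\,ds\le\bigl(t^{-1}\int_0^t\mu_s(y)^p\,ds\bigr)^{1/p}$, which is exactly the first Holmstedt term divided by $t^{1/p}$. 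Hence $t^{-1}K(t,y;L_1,\N)\le C_{p,q}\,t^{-1/p}K(t^\alpha,y;L_p,L_q)$ pointwise, and monotonicity of $\Phi$ together with integration then yield (i).

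Item (iii) is the main obstacle. After the change of variable $s=t^\alpha$, its left-hand side becomes a modular $K$-functional expression for $y$ in the couple $(L_p(\N),L_q(\N))$ with respect to a weighted Orlicz-type parameter space $F$ on $(0,\infty)$, and the right-hand side the corresponding $J$-functional expression for the same $F$. The desired inequality is therefore precisely the statement that the $K$- and $J$-methods give equivalent norms on $F$, which is the content of the Brudnyi--Krugliak fundamental lemma, provided $F$ has nontrivial Boyd-type indices relative to the scaling of the couple. The crucial verification is that the hypothesis $1<p<p_\Phi\le q_\Phi<q$ translates, through the standard control $M_\Phi(s)\le C s^{p_\Phi}$ as $s\to 0$ and $M_\Phi(s)\le C s^{q_\Phi}$ as $s\to\infty$, into the strict two-sided Boyd index condition required for $F$. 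Once this is in place, the fundamental lemma produces, for any $y\in L_p(\N)+L_q(\N)$, a discrete representation $y=\sum_\nu u_\nu$ with $J(2^{\nu\alpha},u_\nu;L_p,L_q)\le C\,K(2^{\nu\alpha},y;L_p,L_q)$ for every $\nu$; passing to a continuous representation $u(\cdot)$ by linear interpolation between dyadic scales and substituting into the right-hand side of (iii) then closes the estimate, with constants depending only on $\Phi$, $p$, and $q$. The delicate bookkeeping is entirely in converting the Matuszewska--Orlicz indices of $\Phi$ into the Boyd indices of the weighted parameter space $F$ that arises from the change of variable $s=t^\alpha$; this is the true core of the argument and where all the information about the hypothesis on $\Phi$ is consumed.
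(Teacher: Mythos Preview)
Your treatments of (i) and (ii) are correct and pleasantly elementary; in fact both pointwise comparisons hold with constant $1$, so the hypothesis on the indices of $\Phi$ is not even used there. This matches the spirit of the argument in \cite{RW2}.

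Your approach to (iii), however, goes in the wrong direction. The inequality in (iii) reads
\[
\text{(K-integral of $y$)} \le C_{\Phi,p,q}\cdot \inf_{u(\cdot)}\ \text{(J-integral of $u$)},
\]
i.e.\ for \emph{every} representation $u(\cdot)$ of $y$ the $J$-integral dominates the $K$-integral. The Brudnyi--Krugliak fundamental lemma produces \emph{one particular} representation $u^{\ast}$ with $J(2^{\nu\alpha},u^{\ast}_\nu)\le C\,K(2^{\nu\alpha},y)$, which after substitution gives only
\[
\inf_{u(\cdot)}\ \text{(J-integral)} \le \text{(J-integral of $u^{\ast}$)} \le C\cdot\text{(K-integral)},
\]
precisely the reverse of (iii). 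So the fundamental lemma is the wrong tool here.

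What (iii) actually needs is the ``easy'' half of the $K$--$J$ equivalence: starting from the universal bound
\[
K\bigl(t^{\alpha},y;L_p,L_q\bigr)\le \int_0^\infty \min\bigl(1,(t/s)^{\alpha}\bigr)\,J\bigl(s^{\alpha},u(s);L_p,L_q\bigr)\,\frac{ds}{s}
\]
valid for any representation $u(\cdot)$, one multiplies by $t^{-1/p}$ and sets $g(s)=s^{-1/p}J(s^{\alpha},u(s))$ to obtain $t^{-1/p}K(t^{\alpha},y)\le (Tg)(t)$, where $T$ is a multiplicative convolution with kernel $k(\lambda)=\lambda^{1/p}$ for $\lambda\le 1$ and $k(\lambda)=\lambda^{1/q}$ for $\lambda>1$. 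The task is then the \emph{modular} bound $\int_0^\infty \Phi\bigl((Tg)(t)\bigr)\,dt\le C_{\Phi,p,q}\int_0^\infty \Phi\bigl(g(t)\bigr)\,dt$. Splitting $T=T_0+T_\infty$ into the two Hardy-type pieces, one checks that $T_0$ is bounded on $L_r(0,\infty)$ for $r>p$ and $T_\infty$ for $r<q$; the strict inequalities $p<p_\Phi\le q_\Phi<q$ are exactly what is required to convert these into $\Phi$-modular bounds (via the definition of the Matuszewska--Orlicz indices and a Jensen/convexity argument, or equivalently by interpolating between two $L_r$'s as in Lemma~\ref{interpol-lemma} applied on the abelian algebra $L_\infty(0,\infty)$). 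This is the step where the hypothesis $1<p<p_\Phi\le q_\Phi<q<\infty$ is genuinely consumed, and it is how the argument in \cite{RW2} proceeds.
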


Below  $\Phi^*$  denotes the Orlicz complementary function to $\Phi$. The next lemma will be used for duality purposes.
\begin{lemma}[{\cite[Proposition~2.3]{RW2}}]\label{duality}
Let $\Phi$ be an Orlicz function which  is $p$-convex and $q$-concave for some $1<p\leq q<\infty$ and $\N$ be a semifinite von Neumnn.  For every $0\leq x \in L_\Phi(\N)$ there exists $0\leq y \in L_{\Phi^*}(\N)$ such that $y$ commutes with $x$  and satisfies:
 $xy=\Phi(x) +\Phi^*(y)$.
\end{lemma}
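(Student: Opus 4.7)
The plan is to reduce the operator statement to the scalar Young equality via the Borel functional calculus applied to the positive operator $x$. In the scalar setting, for any Orlicz function $\Phi$ with right derivative $\phi := \Phi'_+$, the classical Young equality asserts that $s\phi(s) = \Phi(s) + \Phi^*(\phi(s))$ for every $s \geq 0$ (this is the case of equality in $st \leq \Phi(s) + \Phi^*(t)$, achieved precisely when $t$ lies in the subdifferential $\partial\Phi(s)$, and $\Phi'_+(s)$ is always such a point). The function $\phi$ is nonnegative, nondecreasing, and Borel measurable on $[0,\infty)$.

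My first step is to define $y := \phi(x)$ by the Borel functional calculus for the positive selfadjoint operator $x$ affiliated with $\N$. Since $\phi \geq 0$, we get $y \geq 0$; since $y$ is a Borel function of $x$, it commutes with $x$ and is affiliated with the abelian von Neumann algebra generated by $x$. Applying the functional calculus to the scalar identity $s\phi(s) = \Phi(s) + \Phi^*(\phi(s))$ then yields the operator identity
\[
xy = \Phi(x) + \Phi^*(y),
\]
which is the equality asserted in the lemma. (All three operators on the right are nonnegative, and the sum makes sense as positive affiliated operators; the identity on the spectrum of $x$ transfers to the operator level by standard functional-calculus arguments in the commutative subalgebra.)

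The remaining step, which is where the hypotheses on $\Phi$ enter, is to verify that $y \in L_{\Phi^*}(\N)$, i.e.\ that $\T[\Phi^*(y)] < \infty$. Here I invoke the classical consequence of $q$-concavity: if $\Phi$ is $q$-concave, then $s\phi(s) \leq q\,\Phi(s)$ for all $s\geq 0$ (this follows from concavity of $u \mapsto \Phi(u^{1/q})$, which implies $\Phi(s)/s^q$ is nonincreasing, and hence $s\Phi'_+(s) \leq q\Phi(s)$). Combined with the scalar Young equality, this gives
\[
\Phi^*(\phi(s)) = s\phi(s) - \Phi(s) \leq (q-1)\,\Phi(s), \qquad s\geq 0.
\]
Applying functional calculus and taking the trace yields $\T[\Phi^*(y)] \leq (q-1)\T[\Phi(x)] < \infty$ (finite because $x \in L_\Phi(\N)$ and $\Phi$ satisfies $\Delta_2$, so the Orlicz norm and the modular are simultaneously finite). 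This shows $y \in L_{\Phi^*}(\N)$ and completes the proof.

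The main technical point is the pointwise scalar bound $s\Phi'_+(s) \leq q\Phi(s)$ under $q$-concavity, which is what transfers finiteness of $\T[\Phi(x)]$ to finiteness of $\T[\Phi^*(y)]$; the $p$-convexity hypothesis is not needed for this lemma per se, but is recorded because it is the standing assumption throughout the paper. Once that bound is in hand, the rest is a straightforward application of the Borel functional calculus in the abelian subalgebra generated by $x$, so there is no noncommutative obstruction to lifting the scalar Young equality to the operator equality.
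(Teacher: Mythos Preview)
Your argument is correct. The paper does not supply its own proof of this lemma; it is quoted verbatim from \cite[Proposition~2.3]{RW2} and invoked as a black box. Your approach---define $y=\Phi'_+(x)$ via Borel functional calculus, lift the scalar Young equality $s\Phi'_+(s)=\Phi(s)+\Phi^*(\Phi'_+(s))$ to operators, and use $q$-concavity to get the pointwise bound $\Phi^*(\Phi'_+(s))\le (q-1)\Phi(s)$ ensuring $\T[\Phi^*(y)]<\infty$---is exactly the standard route and is essentially what the cited proof in \cite{RW2} does as well. Your observation that $p$-convexity is not actually used here (only $q$-concavity, which already gives $\Delta_2$) is also accurate.
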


\begin{proof}[Proof of  Theorem~\ref{Phi-davis}]
The proof is an adaptation of the argument used in \cite{RW2}  so we will only highlight the main points. We begin with  the proof of (i). Since $\Phi$ is $p$-convex and $q$-concave, we have $p\leq p_\Phi \leq q_\Phi\leq q$. Let $1<p_0<p$ and $q<q_0<\infty$. It is clear that $\Phi$ is $p_0$-convex and $q_0$-concave. Replacing $p$ by $p_0$ and $q$ by $q_0$ if necessary, we may assume without loss of generality that 
$1<p<p_\Phi\leq q_\Phi<q<\infty$. Under this assumption, Lemma~\ref{basic:ineq} applies to $\Phi$. Below, $C_{\Phi,p,q}$  denotes a constant whose value may change from line to line.

Fix  a martingale $x$  such that  $\xi =(dx_n)_{n\geq 1} \in L_1(\M;\ell_2^c) \cap L_\infty(\M; \ell_2^c)$. We make the observation that by complementation,
\[
K(t,\xi)=K\big(t,\xi; L_1(\M;\ell_2^c), L_\infty(\M; \ell_2^c)\big)=\int_0^t \mu_s(S_c(x))\ ds, \quad t>0.
\]
This leads to 
\begin{equation}\label{Obs}
\T\big[ \Phi\big( S_c(x) \big) \big] =\int_0^\infty \Phi(\mu_t(S_c(x)))\ dt \approx \int_0^\infty  \Phi\Big[ t^{-1} K(t, \xi)\Big]\ dt
\end{equation}
where the equivalence comes from the boundedness of the Hilbert operator on $L_r$ for $1<r<\infty$ and Lemma~\ref{interpol-lemma}.

Choose  $u(\cdot)$ a representation of  $\xi$ in the compatible couple $(L_1(\M;\ell_2^c) , L_\infty(\M;\ell_2^c))$ such that:

\begin{equation}\label{J-K}
J(t, u(t)) \leq C K(t,\xi), \quad t>0,
\end{equation}
where $C$ is an absolute constant.
Thus, since $\Phi$ has the $\Delta_2$-condition, we have from \eqref{Obs} and  \eqref{J-K}  that
\begin{equation}\label{J-L1}
\int_0^\infty  \Phi\Big[ t^{-1} J\big(t, u(t)\big)\Big]\ dt  \leq C_\Phi \T\big[ \Phi\big(S_c(x)\big) \big].
\end{equation}
It is important to note that $u(\cdot)$ is also a representation of $\xi$  for the  couple $(L_p(\M;\ell_2^c), L_q(\M; \ell_2^c))$. 
 Putting   \eqref{J-L1} together with Proposition~\ref{basic:ineq}(ii)  yields:
\begin{equation*}
\int_0^\infty  \Phi\Big[ t^{-1/p} J\big(t^{1/p-1/q}, u(t); L_p(\M;\ell_2^c), L_q(\M;\ell_2^c)\big)\Big]\ dt  \leq C_{\Phi ,p,q}\T\big[ \Phi\big(S_c(x)\big) \big].
\end{equation*}
Consider $\Theta: L_p(\M;\ell_2^c) + L_q(\M;\ell_2^c) \to \H_p^c(\M) +\H_q^c(\M)$ defined by: 
\[
\Theta\big( (a_n)_{n\geq 1}\big) = \sum_{n\geq 1} \big[\E_n(a_n)-\E_{n-1}(a_n)\big].
\]
By the noncommutative Stein inequality, $\Theta$ is bounded and one can easily verify that $\Theta\big( u(\cdot)\big)$ is a representation of $x$ for the couple $(\H_p^c(\M), \H_q^c(\M))$. Moreover, we have:
\begin{equation*}
\int_0^\infty  \Phi\Big[ t^{-1/p} J\big(t^{1/p-1/q}, \Theta\big(u(t)\big); \H_p^c(\M), \H_q^c(\M)\big)\Big]\ dt  \leq C_{\Phi ,p,q}\T\big[ \Phi\big(S_c(x)\big) \big].
\end{equation*}
As in \cite{RW2}, we
 need to modify the representation as follows: set $\theta=1/p -1/q$ and define:
\[v(t)=\frac1\theta\,\Theta\big(u(t^{1/\theta}a)\big).
\]
Then  $v(\cdot)$ is a representation of $x$ in the couple $(\H_p^c(\M), H_q^c(\M))$ and 
the preceding inequality becomes:
\begin{equation}\label{J-p-v}
\int_0^\infty  \Phi\Big[ t^{-1/p} J\big(t^{1/p-1/q}, v(t^{1/p-1/q}); \H_p^c(\M), \H_q^c(\M)\big)\Big]\ dt  \leq C_{\Phi ,p,q}\T\big[ \Phi\big(S_c(x)\big) \big].
\end{equation}
 Next, we discretize the integral in \eqref{J-p-v}. If we set $v_\nu=\int_{2^\nu}^{2^{\nu +1}} v(t)\ dt/t$ for every $\nu \in \mathbb{Z}$,  then  
 \begin{equation}\label{discrete-v}
 x=\sum_{\nu \in \mathbb{Z}} v_\nu,\; \ \text{convergence in }\; \H_p^c(\M) + \H_q^c(\M).
 \end{equation}
By \cite[Lemma~3.12(i)]{RW2}, we  deduce from \eqref{J-p-v} that 
\begin{equation}\label{J-p-v-discrete}
\sum_{\nu \in \mathbb{Z}} 2^{\nu/\theta}  \Phi\Big[2^{-\nu/(\theta p)} J\big(2^\nu, v_\nu; \H_p^c(\M), \H_q^c(\M)\big) \Big]  \leq C_{\Phi ,p,q}\T\big[ \Phi\big(S_c(x)\big) \big].
\end{equation}
 
\medskip

The next step is to apply the simultaneous Davis decomposition 
(Corollary~\ref{davis:1} and Remark~\ref{davis:simul}). For each $\nu \in \mathbb{Z}$,  there exist $a_\nu \in \h_p^d(\M)\cap \h_q^d(\M)$ and  $b_\nu \in \h_p^c(\M) \cap \h_q^c(\M)$ such that:
\begin{equation}\label{decompositions}
v_\nu = a_\nu + b_\nu 
\end{equation}
and if $s$ is equal to either $p$ or $q$,  then
\begin{equation}\label{double-decomposition}
\big\|a_\nu \big\|_{\h_s^d} + \big\|b_\nu \big\|_{\h_s^c}  \leq 
C(p,q) \big\| v_\nu \big\|_{\H_s^c}.
\end{equation}
As above, the inequalities in  \eqref{double-decomposition} can be reinterpreted using the $J$-functionals as follows:
\begin{equation}\label{J-inequalities}
\begin{split}
J\big( t, a_\nu; \h_p^d(\M), \h_q^d(\M)\big) &\leq C(p,q) J\big( t, v_\nu; \H_p^c(\M), \H_q^c(\M)\big),\  
t>0,\\
 J\big(t, b_\nu; \h_p^c(\M), \h_q^c(\M)\big) &\leq C(p,q) J\big( t, v_\nu; \H_p^c(\M), \H_q^c(\M)\big), \ t>0.
\end{split}
\end{equation}
Following similar argument  used in the proof of \cite[Sublemma~3.3]{RW}, we  have that the series $ \sum_{\nu \in \mathbb{Z}} a_\nu$ and $ \sum_{\nu \in \mathbb{Z}} b_\nu$  are convergent in $\h_\Phi^d(\M)$ and $\h_\Phi^c(\M)$ respectively. Set
\begin{equation}\label{sum}
a:= \sum_{\nu \in \mathbb{Z}} a_\nu \in \h_\Phi^d(\M)\ \text{and}\ 
b:=\sum_{\nu \in \mathbb{Z}} b_\nu \in \h_\Phi^c(\M).
\end{equation}
Combining \eqref{J-p-v-discrete} with \eqref{J-inequalities} we further get:
\begin{equation}\label{J2-discrete}
\begin{split}
\sum_{\nu \in \mathbb{Z}} 2^{\nu/\theta}  \Phi\Big[2^{-\nu/(\theta p)} J(2^\nu, a_\nu; \h_p^d(\M), \h_q^d(\M)) \Big]  &\leq C_{\Phi ,p,q}\T\big[ \Phi\big(S_c(x)\big) \big],\\
\sum_{\nu \in \mathbb{Z}} 2^{\nu/\theta}  \Phi\Big[2^{-\nu/(\theta p)} J(2^\nu, b_\nu; \h_p^c(\M), \h_q^c(\M)) \Big]  &\leq C_{\Phi ,p,q}\T\big[ \Phi\big(S_c(x)\big) \big].\end{split}
\end{equation}
Next, we convert the above inequalities  into their  corresponding continuous forms. By setting for $t\in [2^\nu, 2^{\nu +1})$,
\[
a(t)=\frac{a_\nu}{\log2} \in \h_p^d(\M) \cap \h_q^d(\M) \ \text{and} \ b(t)=\frac{b_\nu}{\log2} \in \h_p^c(\M) \cap \h_q^c(\M),
\]
we get that $a(\cdot)$ is a representation of $a$ for the couple $(\h_p^d(\M),\h_q^d(\M))$ and $b(\cdot)$ is a representation of $b$ for the couple $(\h_p^c(\M),\h_q^c(\M))$. Moreover, \cite[Lemma~3.12(ii)]{RW2} and \eqref{J2-discrete} provide  integral estimates involving the $J$-functionals:
\begin{equation}\label{J2-continuous}
\begin{split}
\int_0^\infty \Phi\Big[ t^{-1/p} J\big(t^{1/p-1/q}, a(t^{1/p-1/q}); \h_p^d(\M), \h_q^d(\M) \big) \Big]\ dt &\leq C_{\Phi ,p,q}\T\big[ \Phi\big(S_c(x)\big) \big],\\
\int_0^\infty \Phi\Big[ t^{-1/p} J\big(t^{1/p-1/q}, b(t^{1/p-1/q}); \h_p^c(\M), \h_q^c(\M) \big) \Big]\ dt &\leq C_{\Phi ,p,q}\T\big[ \Phi\big(S_c(x)\big) \big].
\end{split}
\end{equation}
By Lemma~\ref{basic:ineq}(iii), these further yield:
\begin{equation}\label{K2-continuous}
\begin{split}
\int_0^\infty \Phi\Big[ t^{-1/p} K\big(t^{1/p-1/q}, a; \h_p^d(\M), \h_q^d(\M) \big) \Big]\ dt &\leq C_{\Phi ,p,q}\T\big[ \Phi\big(S_c(x)\big) \big],\\
\int_0^\infty \Phi\Big[ t^{-1/p} K\big(t^{1/p-1/q}, b; \h_p^c(\M), \h_q^c(\M) \big) \Big]\ dt &\leq C_{\Phi ,p,q}\T\big[ \Phi\big(S_c(x)\big) \big].
\end{split}
\end{equation}

Let $\N_1 :=\M\overline{\otimes} \ell_\infty$ and $\N_2 := \M\overline{\otimes} B(\ell_2(\mathbb{N}^2))$. Since  for every $1\leq r<\infty$, $\mathcal{D}_d: \h_r^d(\M) \to L_r(\N_1)$ and 
$U\mathcal{D}_c: \h_r^c(\M) \to L_r(\N_2)$ are isometries,  inequalities \eqref{K2-continuous} implies:
\begin{equation}\label{K3-continuous}
\begin{split}
\int_0^\infty \Phi\Big[ t^{-1/p} K\big(t^{1/p-1/q}, \mathcal{D}_d(a); L_p(\N_1), L_q(\N_1) \big) \Big]\ dt &\leq C_{\Phi ,p,q}\T\big[ \Phi\big(S_c(x)\big) \big],\\
\int_0^\infty \Phi\Big[ t^{-1/p} K\big(t^{1/p-1/q}, U\mathcal{D}_c(b); L_p(\N_2), L_q(\N_2) \big) \Big]\ dt &\leq C_{\Phi ,p,q}\T\big[ \Phi\big(S_c(x)\big) \big].
\end{split}
\end{equation}
We apply Lemma~\ref{basic:ineq}(i)  to see that  the next two inequalities follow from \eqref{K3-continuous}:  
\begin{equation}\label{last}
\begin{split}
\int_0^\infty \Phi\Big[ t^{-1} K\big(t, \mathcal{D}_d(a) ; L_1(\N_1), \N_1 \big) \Big]\ dt &\leq C_{\Phi ,p,q}\T\big[ \Phi\big(S_c(x)\big) \big],\\
\int_0^\infty \Phi\Big[ t^{-1} K\big(t, U\mathcal{D}_c(b); L_1(\N_2),\N_2 \big) \Big]\ dt &\leq C_{\Phi ,p,q}\T\big[ \Phi\big(S_c(x)\big) \big].
\end{split}
\end{equation}
To conclude the proof, we observe  that $\int_0^\infty \Phi\Big[ t^{-1} K\big(t, \mathcal{D}_d(a) ; L_1(\N_1), \N_1 \big) \Big]\ dt \approx_\Phi \T_1\big[\Phi\big(|\mathcal{D}_d(a)|\big)\big]$ and $\int_0^\infty \Phi\Big[ t^{-1} K\big(t, U\mathcal{D}_c(b); L_1(\N_2),\N_2 \big) \Big]\ dt \approx_\Phi \T_2\big[\Phi\big(|U\mathcal{D}_c(b)|\big)\big]$ where $\T_1$ and $\T_2$ are the natural traces on $\N_1$ and $\N_2$, respectively. It is now straightforward to verify that $\T_1\big[\Phi\big(|\mathcal{D}_d(a)|\big)\big]=\sum_{n\geq 1} \T\big[\Phi\big(|da_n|\big)\big]$ and $\T_2\big[\Phi\big(|U\mathcal{D}_c(b)|\big)\big]=\T\big[\Phi\big(s_c(b)\big)\big]$, that is, we obtain that $x=a+b$ and $\sum_{n\geq 1} \T\big[\Phi\big(|da_n|\big)\big] + \T\big[\Phi\big(s_c(b)\big)\big] \leq C_\Phi \T\big[ \Phi\big(S_c(x)\big) \big]$. The proof of (i) is complete. 

\medskip

Now we provide the argument for (ii). We adapt the duality technique used in \cite{RW2}. Assume that $\Phi$ is $p$-convex and $q$-concave.  If $1/p +1/{p'}=1$ and $1/q +1/{q'}=1$, then $1<q'<p'<\infty$. We observe that 
$\Phi^*$ is $q'$-convex and $p'$-concave.  Therefore the inequality stated in  (i) applies to $\Phi^*$. By approximation, it is enough to verify the inequality for $x\in L_1(\M)\cap \M$. Let $\N=\M \overline{\otimes} B(\ell_2)$ equipped with its natural trace which we will denote by $\T_\N$. 

For $1<r<\infty$,  consider  $\Pi: L_r(\N) \to L_r(\N)$ defined by setting $\Pi\big( (a_{ij}) \big)=\sum_{n\geq 1} a_{1n} \otimes e_{n,1}$. Clearly, $\Pi$ is a contraction. Using the noncommutative Stein inequality, $\widetilde{\Pi}: L_r(\N) \to L_r(\N)$ given by 
$\widetilde{\Pi}\big( (a_{ij}) \big)=\sum_{n\geq 1} [\E_n(a_{1n}) -\E_{n-1}(a_{1n})] \otimes e_{n,1}$ is also bounded for all $1<r<\infty$. By Lemma~\ref{interpol-lemma}, there exists a constant $C_{\Phi^*}$ so that
\begin{equation}\label{projection}
\T_{\N}\big[ \Phi^*\big(|\widetilde{\Pi}((a_{ij}))| \big)\big] \leq C_{\Phi^*} \T_{\N}\big[ \Phi^*\big(|(a_{ij})| \big)\big].
\end{equation}

As in \cite{RW2},  we may fix $t_\Phi>0$ so that for every  operator $0\leq z \in L_{\Phi^*}(\N)$,
\begin{equation}\label{choose-t}
\Phi^*(t_\Phi z) \leq (2C_{\Phi^*}\alpha_{\Phi^*})^{-1} \Phi^*(z)
\end{equation}
where $\alpha_{\Phi^*}$ is the constant from (i) applied to $\Phi^*$.

Set $w :=\sum_{n\geq 1} dx_n \otimes e_{n,1} \in L_1(\N) \cap \N$.
It is clear  that $|w|=S_c(x) \otimes e_{1,1}$. 
 By Lemma~\ref{duality},   we may choose $0\leq y \in L_{\Phi^*}(\N)$ such that  $y$ commutes with $|w|$ and  
\begin{equation}\label{choose-y}
 \Phi\big(|w|\big) + \Phi^*(y) =y|w|.
\end{equation}
If $w=u|w|$ is  the polar decomposition of $w$,  we set $z :=yu^*\in L_1(\N)\cap \N$.
Write $z=(z_{ij})$ where $z_{ij} \in L_1(\M) \cap \M$ and set
\[
v = \sum_{n\geq 1} \E_n(z_{1n})-\E_{n-1}(z_{1n}).
\]
It is clear that   $\T_{\N}(y|w|)=\T_{\N}(zw)=\T(vx)$ and $v \in L_1(\M) \cap \M$. In particular, we may view $v$ as a martingale in $\H_{\Phi^*}^c(\M)$.
We now apply (i)  to  $v$.  There exists a decomposition $v=v^d  + v^c$
with $v^c \in \h_{\Phi^*}^c(\M)$  and $v^d \in \h_{\Phi^*}^d(\M)$ satisfying:
\begin{equation}\label{decomposition-y}
\T\big[ \Phi^*( s_c( v^c))\big]   + \sum_{n\geq 1} \T\big[\Phi^*(|dv_n^d|)\big]  \leq 2\alpha_{\Phi^*}  \T\big[\Phi^*\big( S_c(v)\big)\big].
\end{equation}
 Taking traces on \eqref{choose-y} together with the decomposition of $v$,  we have
 \begin{align*}
 \T_{\N}\big[\Phi(|w|)\big]  +\T_{\N}\big[ \Phi^*(y)] &=  \T(xv)\\
&=\T(xv^d) + \T(xv^c).
 \end{align*}
 One can verify that the following estimates hold (see \cite{RW2} for details):
\begin{equation*}
\T(xv^d)
\leq  \sum_{n\geq 1}\T\big[ \Phi\big(t_\Phi^{-1}|dx_n|\big)\big] + (2C_{\Phi^*}\alpha_{\Phi^*})^{-1} \sum_{n\geq 1}\T\big[\Phi^*\big( |dv_n^d|\big)\big]
\end{equation*}
and
\[  \T(xv^c) \leq \T\big[ \Phi\big(t_\Phi^{-1} s_c(x) \big)\big]  + (2C_{\Phi^*}\alpha_{\Phi^*})^{-1} \T\big[ \Phi^*\big(s_c(v^c)\big) \big].\]
 Applying  \eqref{projection}, \eqref{decomposition-y}, and taking the sum  of the previous two estimates, we arrive at:
\begin{equation*}
\begin{split}
 \T_{\N}\big[\Phi(|w|)\big]  + \T_{\N}\big[\Phi^*(y) \big]  &\leq  \sum_{n\geq 1}\T\big[ \Phi\big(t_\Phi^{-1}|dx_n|\big)\big]  + \T\big[ \Phi\big(t_\Phi^{-1} s_c(x) \big)\big]  +  C_{\Phi^*}^{-1}\T\big[\Phi^*(S_c(v)) \big]\\
 &\leq  \sum_{n\geq 1}\T\big[ \Phi\big(t_\Phi^{-1}|dx_n|\big)\big]  + \T\big[ \Phi\big(t_\Phi^{-1} s_c(x) \big)\big]  +  C_{\Phi^*}^{-1}\T_{\N}\big[\Phi^*(|\widetilde{\Pi}(z)|) \big]\\
 &\leq \sum_{n\geq 1}\T\big[ \Phi\big(t_\Phi^{-1}|dx_n|\big)\big]  + \T\big[ \Phi\big(t_\Phi^{-1} s_c(x) \big)\big]  +  \T_{\N}\big[\Phi^*(|z|) \big].
\end{split}
\end{equation*}
As we clearly have  $ \T_{\N}\big[\Phi^*(|z|) \big] \leq  \T_{\N}\big[\Phi^*(y) \big]$, we deduce  that 
\begin{align*}
 \T\big[\Phi(S_c(x))\big]  =\T_{\N}\big[\Phi(|w|) \big] &\leq \sum_{n\geq 1}\T\big[ \Phi\big(t_\Phi^{-1}|dx_n|\big)\big]  + \T\big[ \Phi\big(t_\Phi^{-1} s_c(x) \big)\big] \\
 &\leq 2\max\Big\{\sum_{n\geq 1}\T\big[ \Phi\big(t_\Phi^{-1}|dx_n|\big)\big]  , \T\big[ \Phi\big(t_\Phi^{-1} s_c(x) \big)\big] \Big\}.
\end{align*}
The existence of the constant $\beta_\Phi$   in the statement (ii)  now follows from the $\Delta_2$-condition.
\end{proof}

The next result solves \cite[Problem~6.3]{RW2}. It can be deduced at once  from  combining both the row and column versions of Theorem~\ref{Phi-davis} with Theorem~\ref{BG-Phi}.

\begin{theorem}\label{main-Phi2} Let $1<p<q<\infty$ and 
$\Phi$  be an Orlicz function that is $p$-convex and $q$-concave. Then there exist positive constants $\delta_\Phi$ and $\eta_\Phi$ such that:
\begin{enumerate}[{\rm(i)}]
 \item for every martingale $x \in L_\Phi(\M)$, the following inequality holds:
 \begin{equation*}
\delta_\Phi^{-1}  \inf\Big\{ \T\big[ \Phi( s_c( x^c))\big] +  \T\big[ \Phi( s_r( x^r))\big]+ \sum_{n\geq 1} \T\big[\Phi(|dx_n^d|)\big] \Big\}\leq \T\big[\Phi(|x|)\big], 
\end{equation*}
with  the infimum    being taken over all  $x^c \in \h_\Phi^c(\M)$, $x^r \in \h_\Phi^r(\M)$, and $x^d \in \h_\Phi^d(\M)$ such that $x= x^d +x^c +x^r$;
\item for every $x \in \h_\Phi^d(\M) \cap \h_\Phi^c(\M)\cap \h_\Phi^r(\M)$, the following inequality holds:
\begin{equation*}
\T\big[\Phi(|x|)\big] \leq \eta_\Phi \max\Big\{ \sum_{n\geq 1}\T\big[ \Phi\big( |dx_n|\big) \big], \T\big[ \Phi( s_c( x))\big],  \T\big[ \Phi( s_r( x))\big]\Big\}.
\end{equation*}
\end{enumerate}
\end{theorem}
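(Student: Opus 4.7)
The plan is to read Theorem~\ref{main-Phi2} as an outer Burkholder--Gundy step (Theorem~\ref{BG-Phi}) composed with an inner Davis step (Theorem~\ref{Phi-davis}, applied to both column and row versions). The $\Delta_2$-condition on $\Phi$ lets us freely absorb the universal constants that arise when we split or merge sequences. Since every ingredient is already in place, no real obstacle remains; the proof is essentially an assembly.

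For item (i), given $x \in L_\Phi(\M)$ I would first invoke Theorem~\ref{BG-Phi}(i) to produce $a^c \in \H_\Phi^c(\M)$ and $a^r \in \H_\Phi^r(\M)$ with $x = a^c + a^r$ and
\[
\T\big[\Phi(S_c(a^c))\big] + \T\big[\Phi(S_r(a^r))\big] \leq c_\Phi\, \T\big[\Phi(|x|)\big].
\]
I would then apply Theorem~\ref{Phi-davis}(i) to $a^c$, obtaining $a^c = y^c + u^d$ with $y^c \in \h_\Phi^c(\M)$, $u^d \in \h_\Phi^d(\M)$, and $\T[\Phi(s_c(y^c))] + \sum_n \T[\Phi(|du_n^d|)] \le \a_\Phi\,\T[\Phi(S_c(a^c))]$, and the row version of the same theorem to $a^r$, obtaining $a^r = z^r + v^d$ with the analogous estimate. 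Setting $x^c := y^c$, $x^r := z^r$, and $x^d := u^d + v^d$, we have the desired decomposition $x = x^d + x^c + x^r$. The $\Delta_2$-condition yields a constant $K_\Phi$ with $\Phi(|du_n^d + dv_n^d|) \leq K_\Phi(\Phi(|du_n^d|) + \Phi(|dv_n^d|))$, so summing the three pieces and chaining the inequalities gives
\[
\T\big[\Phi(s_c(x^c))\big] + \T\big[\Phi(s_r(x^r))\big] + \sum_{n\ge 1}\T\big[\Phi(|dx_n^d|)\big] \leq \delta_\Phi\, \T\big[\Phi(|x|)\big]
\]
with $\delta_\Phi$ depending only on $\a_\Phi$, $c_\Phi$, and $K_\Phi$. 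Taking the infimum over all such decompositions finishes (i).

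For item (ii), the argument runs in the reverse order. Starting from $x \in \h_\Phi^d(\M) \cap \h_\Phi^c(\M) \cap \h_\Phi^r(\M)$, Theorem~\ref{Phi-davis}(ii) applied in the column form gives
\[
\T\big[\Phi(S_c(x))\big] \leq \b_\Phi \max\Big\{\sum_{n\ge 1}\T\big[\Phi(|dx_n|)\big],\ \T\big[\Phi(s_c(x))\big]\Big\},
\]
and its row analogue yields the corresponding bound for $\T[\Phi(S_r(x))]$. Feeding these into Theorem~\ref{BG-Phi}, which provides $\T[\Phi(|x|)] \leq c_\Phi \max\{\T[\Phi(S_c(x))], \T[\Phi(S_r(x))]\}$, we obtain the required inequality with a constant $\eta_\Phi = c_\Phi \b_\Phi$ (up to an absorbing factor from $\Delta_2$). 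The only point requiring care throughout is tracking how the max/sum structure interacts with $\Phi$: at each junction one uses that for an Orlicz function with $\Delta_2$-condition, $\Phi(a+b) \leq K_\Phi(\Phi(a)+\Phi(b))$ and $\max\{u,v\} \leq u+v \leq 2\max\{u,v\}$, so passing between "max" and "sum" of two terms costs only a universal constant. This completes the assembly.
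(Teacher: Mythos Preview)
Your proposal is correct and matches the paper's approach exactly: the paper's proof is the single sentence ``It can be deduced at once from combining both the row and column versions of Theorem~\ref{Phi-davis} with Theorem~\ref{BG-Phi},'' and you have spelled out precisely that combination. One minor imprecision: the inequality $\Phi(|du_n^d + dv_n^d|) \leq K_\Phi(\Phi(|du_n^d|) + \Phi(|dv_n^d|))$ need not hold as an operator inequality in the noncommutative setting, but the trace version $\T[\Phi(|du_n^d + dv_n^d|)] \leq K_\Phi(\T[\Phi(|du_n^d|)] + \T[\Phi(|dv_n^d|)])$ does (via $\mu_t(a+b)\le \mu_{t/2}(a)+\mu_{t/2}(b)$ and $\Delta_2$), and that is all you need.
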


We are now in a position of stating our $\Phi$-moment version of Burkholder/Rosenthal inequality. It  should be compared with  a recent  version of the Burkholder-Gundy inequality from \cite[Theorem~7.2]{Jiao-Sukochev-Zanin-Zhou}. Our result is much more general than the version obtained in \cite{RW2}. In fact, it solves  \cite[Problems~6.4]{RW2}. 

\begin{theorem}\label{Phi-burk} Let 
$\Phi$  be an Orlicz function. 
\begin{enumerate}[{\rm(i)}]
 \item  If $\Phi$ is $p$-convex for some $1<p<2$ and  $2$-concave, then there exists a positive constant $C_\Phi$ so that  the following  holds for every martingale $x \in L_\Phi(\M)$:
 \begin{equation*}
 C_\Phi^{-1} 
\T\big[\Phi(|x|)\big] \leq 
 \inf\Big\{ \T\big[ \Phi( s_c( x^c))\big] +  \T\big[ \Phi( s_r( x^r))\big]+ \sum_{n\geq 1} \T\big[\Phi(|dx_n^d|)\big] \Big\} \leq C_\Phi \T\big[\Phi(|x|)\big],
\end{equation*}
where  the infimum   is  taken over all  $x^c \in \h_\Phi^c(\M)$, $x^r \in \h_\Phi^r(\M)$, and $x^d \in \h_\Phi^d(\M)$ such that $x= x^d +x^c +x^r$;
\item  If $\Phi$ is $2$-convex and $q$-concave for some $q>2$, then there exists a positive constant $c_\Phi$ so that the following  holds for every martingale $x \in L_\Phi(\M)$:
\begin{equation*}
c_\Phi^{-1}\T\big[\Phi(|x|)\big] \leq c_\Phi \max\Big\{ \sum_{n\geq 1}\T\big[ \Phi\big( |dx_n|\big) \big],\; \T\big[ \Phi( s_c( x))\big], \; \T\big[ \Phi( s_r( x))\big]\Big\} \leq  c_\Phi \T\big[\Phi(|x|)\big].
\end{equation*}
\end{enumerate}
\end{theorem}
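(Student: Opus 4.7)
My plan is to derive each part of Theorem~\ref{Phi-burk} by combining Theorem~\ref{main-Phi2} (which already supplies one of the two inequalities in each case) with Theorem~\ref{BG-Phi} and a handful of $\Phi$-moment embeddings that reflect the endpoint nature of the hypothesis on $\Phi$.

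For part (i), the bound $\inf\{\cdots\} \leq C_\Phi \T[\Phi(|x|)]$ is immediate from Theorem~\ref{main-Phi2}(i): since $\Phi$ is $p$-convex for the given $1<p<2$ and $2$-concave, it is also $q$-concave for any $q>2$, so Theorem~\ref{main-Phi2} applies with the pair $(p,q)$. For the reverse $\T[\Phi(|x|)] \leq C_\Phi \inf\{\cdots\}$, I would fix any decomposition $x = x^d + x^c + x^r$ and apply Theorem~\ref{BG-Phi} to the splitting $x = (x^c + \tfrac{1}{2}x^d) + (x^r + \tfrac{1}{2}x^d)$. Using the operator inequality $|a+b|^2 \leq 2(|a|^2 + |b|^2)$, which yields $S_c(u+v)^2 \leq 2(S_c(u)^2 + S_c(v)^2)$, together with the $\Delta_2$-condition on $\Phi$, one bounds $\T[\Phi(|x|)]$ by a constant times
\[
\T[\Phi(S_c(x^c))] + \T[\Phi(S_c(x^d))] + \T[\Phi(S_r(x^r))] + \T[\Phi(S_r(x^d))].
\]
It then remains to apply the $\Phi$-moment embeddings
\[
\T[\Phi(S_c(x^c))] \leq C_\Phi \T[\Phi(s_c(x^c))], \qquad \T[\Phi(S_c(x^d))] \leq C_\Phi \sum_{n\geq 1} \T[\Phi(|dx_n^d|)],
\]
and their row analogues, before taking the infimum over decompositions. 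These embeddings are the $\Phi$-moment lifts of the embeddings $\h_E^c \hookrightarrow \H_E^c$ and $\h_E^d \hookrightarrow \H_E^c$ that were used at the symmetric-norm level in the proof of Corollary~\ref{appl1}(i) when $E \in {\rm Int}(L_p,L_2)$.

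For part (ii), the bound $\T[\Phi(|x|)] \leq c_\Phi \max\{\cdots\}$ is immediate from Theorem~\ref{main-Phi2}(ii), since $\Phi$ being $2$-convex implies it is $p$-convex for every $p<2$. For the reverse $\max\{\cdots\} \leq c_\Phi \T[\Phi(|x|)]$, one bounds each of the three terms separately. The column term factors as $\T[\Phi(s_c(x))] \leq C_\Phi \T[\Phi(S_c(x))] \leq C_\Phi \T[\Phi(|x|)]$: the first inequality is the $\Phi$-moment Stein inequality of \cite{Bekjan-Chen} (already invoked inside the proof of Theorem~\ref{BG-Phi}), and the second is the one-sided $\Phi$-moment Burkholder-Gundy inequality, which for $\Phi$ that is $2$-convex can be obtained from the $\Phi$-moment noncommutative Khintchine inequality together with the $\Phi$-moment boundedness of martingale transforms (both from \cite{Bekjan-Chen}). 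The row term is symmetric. The diagonal term $\sum_{n\geq 1}\T[\Phi(|dx_n|)] \leq C_\Phi \T[\Phi(|x|)]$ is the $\Phi$-moment diagonal Rosenthal bound, obtainable by interpolation via Lemma~\ref{interpol-lemma} between the trivial $L_2$ case and the Rosenthal diagonal bound in $L_q$ for $q > 2$.

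The main obstacle I anticipate is the rigorous transfer of the ingredient inequalities from the symmetric-norm (or $L_p$) setting to the modular $\Phi$-moment setting. Neither the embeddings $\h_\Phi^c \hookrightarrow \H_\Phi^c$ and $\h_\Phi^d \hookrightarrow \H_\Phi^c$ used in (i), nor the one-sided $\Phi$-moment Burkholder-Gundy inequality $\T[\Phi(S_c(x))] \leq C_\Phi \T[\Phi(|x|)]$ used in (ii), is isolated explicitly in the paper. Each should follow from a re-run of the $K$- and $J$-functional discretization argument from the proof of Theorem~\ref{Phi-davis} via Lemma~\ref{basic:ineq}, using $L_\Phi \in {\rm Int}(L_p,L_2)$ in case (i) and $L_\Phi \in {\rm Int}(L_2,L_q)$ in case (ii), applied one endpoint inequality at a time.
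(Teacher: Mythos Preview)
Your plan is correct in outline but takes a more circuitous route than the paper's proof, and one attribution is off. In both parts you pass through the \emph{unconditioned} Hardy spaces $\H_\Phi^c,\H_\Phi^r$ as intermediaries: in (i) you chain $L_\Phi \to \H_\Phi^c+\H_\Phi^r \to \h_\Phi^c,\h_\Phi^d,\h_\Phi^r$; in (ii) you reverse the chain. The paper instead bypasses $\H_\Phi$ entirely and passes directly between $\h_\Phi^s$ and $L_\Phi$ in a single interpolation step. Concretely, for (i) the paper notes that $\h_w^s(\M)\hookrightarrow L_w(\M)$ boundedly for $w\in\{p,2\}$ and $s\in\{d,c,r\}$ (the Junge--Xu Burkholder inequality), and since $\h_w^c$ is the range of the bounded projection $\Pi$ on $L_w(\M\overline{\otimes}B(\ell_2(\mathbb N^2)))$ via $U\cal D_c$, Lemma~\ref{interpol-lemma} applied to $\Theta\circ\Pi$ yields $\T[\Phi(|a|)]\le C_\Phi\,\T[\Phi(s_c(a))]$ in one shot; the bound on $\T[\Phi(|x|)]$ then follows by the quasi-triangle inequality. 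For (ii) the same device is run with the inclusion $L_w(\M)\hookrightarrow\h_w^c(\M)$ for $w\in\{2,q\}$. Two comments on your version: first, Theorem~\ref{BG-Phi} as stated does \emph{not} give $\T[\Phi(|x|)]\le C\big(\T[\Phi(S_c(y))]+\T[\Phi(S_r(z))]\big)$ for a chosen splitting $x=y+z$; its upper bound is the $\max$ form with $S_c(x),S_r(x)$, and its lower bound goes the wrong way. What you actually need there is the one-sided embedding $\H_\Phi^c\hookrightarrow L_\Phi$ under the $2$-concave hypothesis, which is precisely one of the ``missing'' embeddings you flag at the end. Second, your proposed fix --- re-running the full $K$/$J$-functional discretization of Theorem~\ref{Phi-davis} via Lemma~\ref{basic:ineq} --- is heavier than necessary: every embedding you need in both (i) and (ii) follows immediately from Lemma~\ref{interpol-lemma} once you exhibit the relevant map as simultaneously bounded between $L_p$- and $L_2$-spaces (resp.\ $L_2$- and $L_q$-spaces) over suitable semifinite von Neumann algebras, exactly as the paper does.
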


\begin{proof}  To prove (i), it is enough to verify that  there exists a constant $C_\Phi$ so that for every 
decomposition $x=x^d + x^c + x^r$, 
\[
\T\big[\Phi(|x|)\big]\leq C_\Phi \Big\{ \T\big[ \Phi( s_c( x^c))\big] +  \T\big[ \Phi( s_r( x^r))\big]+ \sum_{n\geq 1} \T\big[\Phi(|dx_n^d|)\big] \Big\}\,,
\]
as the  reverse inequality is already  contained in Theorem~\ref{main-Phi2}.
This follows from the facts that  $\h_2^s(\M)=L_2(\M)$,  $ h_p^s(\M) \subset L_p(\M)$ for $s\in\{d,c,r\}$,  and that the   Hardy  spaces are complemented subspaces of noncommutative $L_p$-spaces. Indeed, for  $w\in\{p,2\}$, let $\Pi: 
L_w(\M \overline{\otimes} B(\ell_2(\mathbb{N}^2)))  \to \h_w^c(\M)$ be the bounded projection (see \cite{Ju} for the fact that the projections are simultaneously bounded) and 
$\Theta: \h_w^c(\M) \to L_w(\M)$ the formal inclusion. By the noncommutative Burkholder inequality,  $\Theta\circ \Pi :  L_w(\M \overline{\otimes} B(\ell_2(\mathbb{N}^2))) \to L_w(\M)$ is bounded. By  Lemma~\ref{interpol-lemma}, we have for every   $a \in \h_\Phi^c(\M)$, 
\[
\T \big[ \Phi( |a|)\big]  = \T \big[\Phi( |\Theta\Pi(U{\cal D}_c(a))| )\big]  \leq C_\Phi \T \otimes \tr \big[ \Phi(|U{\cal D}_c(a)|) \big]=C_\Phi \T\big[ \Phi(s_c(a))\big].
\]
Similar arguments can be applied to the diagonal  and the row parts.

\medskip

(ii) Assume now that  $\Phi$ is $2$-convex and $q$-concave for  some $q>2$. By the noncommutative Burkholder inequalities, the formal inclusion is bounded from $L_w(\M)$ into $\h_w^c(\M)$ for all $w\geq 2$. Denote this by $I$. We have $U\cal{D}_c I: L_w(\M) \to L_w(\M \overline{\otimes} B(\ell_2(\mathbb{N}^2)))$ is bounded for all $w\geq 2$.  We deduce from  Lemma~\ref{interpol-lemma} that  for every $b \in L_\Phi(\M)$,
\[
\T\big[\Phi(s_c(b))\big] =\T \otimes \tr \big[\Phi\big(|U\cal{D}_c I(b)|\big) \big] \leq C_\Phi \T\big[ \Phi(|b|)\big].
\]
Applying the same argument for the diagonal and the row parts, we have for every $x\in L_\Phi(\M)$,
\[
 \max\Big\{ \sum_{n\geq 1}\T\big[ \Phi\big( |dx_n|\big) \big], \T\big[ \Phi( s_c( x))\big],  \T\big[ \Phi( s_r( x))\big]\Big\} \leq C_\Phi \T\big[\Phi(|x|)\big].
\]
The reverse inequality is already contained  in Theorem~\ref{main-Phi2}(ii).
\end{proof}


\medskip
We conclude by  exhibiting examples of Orlicz functions  for which  the  $\Phi$-moment versions of the noncommutative Burkholder inequalities apply but not covered by the results from \cite{RW2}. 

\begin{example} Let $\Phi=t^p\log(1+t^q)$ with $p>1$ and $q>0$. One can check that $p_\Phi=p$ and $q_\Phi=p+q$. Also, since $\Phi(t)/t^p$ is increasing and $\Phi(t)/ t^{p+q}$ is decreasing, $\Phi$ is $p$-convex and $p+q$-concave.
\begin{enumerate}[{(\rm i)}]
\item If $p+q=2$ then the equivalence in Theorem~\ref{Phi-burk}(i) holds for $\Phi$.
\item If $p=2$ then the equivalence in Theorem~\ref{Phi-burk}(ii) holds for $\Phi$. 
\end{enumerate}
\end{example}

\medskip

\noindent{\bf Acknowledgments.} A portion of the work reported in this paper was completed while   the first   and   second named authors visited the Harbin Institute of Technology in the Summer of 2017. It is their pleasure to express their gratitude to the Institute for Advanced Study in Mathematics of  the HIT  for providing  stimulating working atmosphere and  for financial supports. The second named author was partially  supported by NSFC  grant  No. 11601526 and the China Postdoctoral Foundation (2016M602420, 2017T100606).
The third named author was partially supported by NSFC grant No. 11431011, the French project ISITE-BFC (ANR-15-IDEX-03) and IUF.


\begin{thebibliography}{10}

\bibitem{Bekjan-Chen}
T.~Bekjan and Z.~Chen, \emph{Interpolation and {$\Phi$}-moment inequalities of
  noncommutative martingales}, Probab. Theory Related Fields \textbf{152}
  (2012), no.~1-2, 179--206. \MR{2875756}

\bibitem{Bekjan-Chen-Ose}
T.~Bekjan, Z.~Chen, and A.~Os{\c{e}}kowski, \emph{Noncommutative maximal
  inequalities associated with convex functions}, Trans. Amer. Math. Soc.
  \textbf{369} (2017), no.~1, 409--427. \MR{3557778}

\bibitem{Bekjan-Chen-Perrin-Y}
T.~Bekjan, {Z.} Chen, {M.} Perrin, and Z.~Yin, \emph{Atomic decomposition and
  interpolation for {H}ardy spaces of noncommutative martingales}, J. Funct.
  Anal. \textbf{258} (2010), no.~7, 2483--2505. \MR{2584751 (2011d:46131)}

\bibitem{Bennett1}
C.~Bennett, \emph{Banach function spaces and interpolation methods. {I}. {T}he
  abstract theory}, J. Funct. Anal. \textbf{17} (1974), 409--440. \MR{0361826
  (50 \#14271)}

\bibitem{BENSHA}
C.~Bennett and R.~Sharpley, \emph{Interpolation of operators}, Academic Press
  Inc., Boston, MA, 1988. \MR{89e:46001}

\bibitem{BL}
J.~Bergh and J.~L{\"o}fstr{\"o}m, \emph{Interpolation spaces. {A}n
  introduction}, Springer-Verlag, Berlin, 1976, Grundlehren der Mathematischen
  Wissenschaften, No. 223. \MR{MR0482275 (58 \#2349)}

\bibitem{Bu1}
D.~L. Burkholder, \emph{Distribution function inequalities for martingales},
  Ann. Probab. \textbf{1} (1973), 19--42. \MR{51 \#1944}

\bibitem{Burkholder-Davis-Gundy}
D.~L. Burkholder, B.~J. Davis, and R.~F. Gundy, \emph{Integral inequalities for
  convex functions of operators on martingales}, Proceedings of the {S}ixth
  {B}erkeley {S}ymposium on {M}athematical {S}tatistics and {P}robability
  ({U}niv. {C}alifornia, {B}erkeley, {C}alif., 1970/1971), {V}ol. {II}:
  {P}robability theory, Univ. California Press, Berkeley, Calif., 1972,
  pp.~223--240. \MR{0400380 (53 \#4214)}

\bibitem{Burk-Gundy}
D.~L. Burkholder and R.~F. Gundy, \emph{Extrapolation and interpolation of
  quasi-linear operators on martingales}, Acta Math. \textbf{124} (1970),
  249--304. \MR{0440695 (55 \#13567)}

\bibitem{Da2}
B.~Davis, \emph{On the integrability of the martingale square function}, Israel
  J. Math. \textbf{8} (1970), 187--190. \MR{42 \#3863}

\bibitem{Dirksen2}
S.~Dirksen, \emph{Noncommutative {B}oyd interpolation theorems}, Trans. Amer.
  Math. Soc. \textbf{367} (2015), no.~6, 4079--4110. \MR{3324921}

\bibitem{Dirksen}
\bysame, \emph{Weak-type interpolation for noncommutative maximal operators},
  J. Operator Theory \textbf{73} (2015), no.~2, 515--532. \MR{3346135}

\bibitem{Dirk-Pag-Pot-Suk}
S.~Dirksen, B.~de~Pagter, D.~Potapov, and F.~Sukochev, \emph{Rosenthal
  inequalities in noncommutative symmetric spaces}, J. Funct. Anal.
  \textbf{261} (2011), no.~10, 2890--2925. \MR{2832586 (2012k:46073)}

\bibitem{Dirksen-Ricard}
S.~Dirksen and E.~Ricard, \emph{Some remarks on noncommutative {K}hintchine
  inequalities}, Bull. Lond. Math. Soc. \textbf{45} (2013), no.~3, 618--624.
  \MR{3065031}

\bibitem{DDP1}
P.~G. Dodds, T.~K. Dodds, and B.~de~Pagter, \emph{Noncommutative {B}anach
  function spaces}, Math. Z. \textbf{201} (1989), 583--597. \MR{90j:46054}

\bibitem{DDP4}
\bysame, \emph{Fully symmetric operator spaces}, Integral Equations Operator
  Theory \textbf{15} (1992), no.~6, 942--972. \MR{1188788 (94j:46062)}

\bibitem{DDP3}
\bysame, \emph{Noncommutative {K}\"othe duality}, Trans. Amer. Math. Soc.
  \textbf{339} (1993), 717--750. \MR{94a:46093}

\bibitem{Ga}
{A. M.} Garsia, \emph{Martingale inequalities: {S}eminar notes on recent
  progress}, W. A. Benjamin, Inc., Reading, Mass.-London-Amsterdam, 1973,
  Mathematics Lecture Notes Series. \MR{56 \#6844}

\bibitem{Hong-Junge-Parcet}
G.~Hong, M.~Junge, and J.~Parcet, \emph{Algebraic {D}avis decomposition and
  asymmetric {D}oob inequalities}, Comm. Math. Phys. \textbf{346} (2016),
  no.~3, 995--1019. \MR{3537343}

\bibitem{Jiao2}
Y.~Jiao, \emph{Martingale inequalities in noncommutative symmetric spaces},
  Arch. Math. (Basel) \textbf{98} (2012), no.~1, 87--97. \MR{2885535}

\bibitem{Jiao-Sukochev-Zanin}
Y.~Jiao, F.~Sukochev, and D.~Zanin, \emph{Johnson-{S}chechtman and {K}hintchine
  inequalities in noncommutative probability theory}, J. Lond. Math. Soc. (2)
  \textbf{94} (2016), no.~1, 113--140. \MR{3532166}

\bibitem{Jiao-Sukochev-Zanin-Zhou}
Y.~Jiao, F.~Sukochev, D.~Zanin, and D.~Zhou, \emph{Johnson--{S}chechtman
  inequalities for noncommutative martingales}, J. Funct. Anal. \textbf{272}
  (2017), no.~3, 976--1016. \MR{3579131}

\bibitem{Ju}
M.~Junge, \emph{Doob's inequality for non-commutative martingales}, J. Reine
  Angew. Math. \textbf{549} (2002), 149--190. \MR{2003k:46097}

\bibitem{Junge-Perrin}
M.~Junge and M.~Perrin, \emph{Theory of {$\mathcal H\sb p$}-spaces for
  continuous filtrations in von {N}eumann algebras}, Ast\'erisque (2014),
  no.~362, vi+134. \MR{3241706}

\bibitem{JX}
M.~Junge and Q.~Xu, \emph{Noncommutative {B}urkholder/{R}osenthal
  inequalities}, Ann. Probab. \textbf{31} (2003), no.~2, 948--995.
  \MR{2004f:46078}; II: Applications, Israel J. Math. \textbf{167} (2008), 227--282. \MR{2448025}.
 

\bibitem{JX2}
\bysame, \emph{On the best constants in some non-commutative martingale
  inequalities}, Bull. London Math. Soc. \textbf{37} (2005), no.~2, 243--253.
  \MR{2005k:46170}

\bibitem{KaltonSMS}
N.~Kalton and S.~Montgomery-Smith, \emph{Interpolation of {B}anach spaces},
  Handbook of the geometry of Banach spaces, Vol.\ 2, North-Holland, Amsterdam,
  2003, pp.~1131--1175. \MR{1 999 193}

\bibitem{Kalton-Sukochev}
N.~J. Kalton and F.~A. Sukochev, \emph{Symmetric norms and spaces of
  operators}, J. Reine Angew. Math. \textbf{621} (2008), 81--121. \MR{2431251
  (2009i:46118)}

\bibitem{LeM-Suk}
C.~Le~Merdy and F.~Sukochev, \emph{Rademacher averages on noncommutative
  symmetric spaces}, J. Funct. Anal. \textbf{255} (2008), no.~12, 3329--3355.
  \MR{2469025 (2009k:46116)}

\bibitem{Lepingle}
D.~L{\'e}pingle, \emph{Une in\'egalit\'e de martingales}, S\'eminaire de
  {P}robabilit\'es, {XII} ({U}niv. {S}trasbourg, {S}trasbourg, 1976/1977),
  Lecture Notes in Math., vol. 649, Springer, Berlin, 1978, pp.~134--137.
  \MR{520002}

\bibitem{LT}
J.~Lindenstrauss and L.~Tzafriri, \emph{Classical {B}anach spaces. {I}{I}},
  Springer-Verlag, Berlin, 1979, Function spaces. \MR{81c:46001}

\bibitem{Maligranda2}
L.~Maligranda, \emph{Orlicz spaces and interpolation}, Semin\'arios de
  Matem\'atica [Seminars in Mathematics], vol.~5, Universidade Estadual de
  Campinas, Departamento de Matem\'atica, Campinas, 1989. \MR{2264389
  (2007e:46025)}

\bibitem{Muller2}
P.~F.~X. M\"uller, \emph{A decomposition for {H}ardy martingales}, Indiana
  Univ. Math. J. \textbf{61} (2012), no.~5, 1801--1816. \MR{3119601}


\bibitem{Osekowski1}
A.~Os{\c{e}}kowski, \emph{Sharp {$L\sp 1(\ell\sp q)$} estimate for a sequence
  and its predictable projection}, Statist. Probab. Lett. \textbf{104} (2015),
  82--86. \MR{3360708}

\bibitem{Perrin}
M.~Perrin, \emph{A noncommutative {D}avis' decomposition for martingales}, J.
  Lond. Math. Soc. (2) \textbf{80} (2009), no.~3, 627--648. \MR{2559120
  (2011e:46104)}

\bibitem{PX}
G.~Pisier and Q.~Xu, \emph{Noncommutative martingale inequalities}, Comm.
  Math. Phys. \textbf{189} (1997), 667--698. \MR{98m:46079}

\bibitem{PX3}
\bysame, \emph{Non-commutative {$L\sp p$}-spaces}, Handbook of the geometry of
  Banach spaces, Vol.\ 2, North-Holland, Amsterdam, 2003, pp.~1459--1517.
  \MR{2004i:46095}

\bibitem{Qiu1}
Y.~Qiu, \emph{A non-commutative version of {L}\'epingle-{Y}or martingale
  inequality}, Statist. Probab. Lett. \textbf{91} (2014), 52--54. \MR{3208115}

\bibitem{Ran15}
N.~Randrianantoanina, \emph{Non-commutative martingale transforms}, J. Funct.
  Anal. \textbf{194} (2002), 181--212. \MR{2003m:46098}

\bibitem{RW}N.~Randrianantoanina and L.~Wu,
 \emph{Martingale inequalities in noncommutative symmetric spaces}, J.
  Funct. Anal. \textbf{269} (2015), no.~7, 2222--2253. \MR{3378874}


\bibitem{RW2}
\bysame, \emph{Noncommutative {B}urkholder/{R}osenthal
  inequalities associated with convex functions}, Ann. Inst. H. Poincar\'e
  Probab. Statist.\textbf{53} (2017), no.~4, 1575--1605. \MR{3729629}

\bibitem{Xu-Nikishin}
Q.~Xu, \emph{A noncommutative {N}ikishin theorem}, Unpublished note.

\bibitem{X}
\bysame, \emph{Analytic functions with values in lattices and symmetric spaces
  of measurable operators}, Math. Proc. Cambridge Philos. Soc. \textbf{109}
  (1991), 541--563. \MR{92g:46036}

\end{thebibliography}
\def\cprime{$'$}
\providecommand{\bysame}{\leavevmode\hbox to3em{\hrulefill}\thinspace}
\providecommand{\MR}{\relax\ifhmode\unskip\space\fi MR }
\providecommand{\MRhref}[2]{%
  \href{http://www.ams.org/mathscinet-getitem?mr=#1}{#2}
}
\providecommand{\href}[2]{#2}

\end{document}